\numberwithin{equation}{section}
\numberwithin{theorem}{section}
\numberwithin{lemma}{section}
\numberwithin{remark}{section}
\begin{document}

\title{Uniform convergence of  V-cycle multigrid algorithms for two-dimensional fractional Feynman-Kac equation}



\author{Minghua Chen         \and
         Weihua Deng          \and
 Stefano Serra-Capizzano
}

\institute{
 M. Chen   (\Envelope)   \and W. Deng        \\
School of Mathematics and Statistics, Gansu Key Laboratory of Applied Mathematics and Complex Systems, Lanzhou University, Lanzhou 730000, P.R. China\\
email:chenmh@lzu.edu.cn;   dengwh@lzu.edu.cn\\
S. Serra-Capizzano \\
Department of Science and High Technology, University of Insubria, Via Valleggio 11, 22100 Como, Italy $\&$ Department of Information Technology, Division of Scientific Computing, Uppsala University - ITC, Lägerhyddsv. 2, hus 2,  P.O. Box 337, SE-751 05, Uppsala, Sweden\\
email: stefano.serrac@uninsubria.it, stefano.serra@it.uu.se }
\date{Received: date / Accepted: date}

\maketitle

\begin{abstract}
When solving large linear systems stemming from the approximation of elliptic partial differential equations (PDEs), it is known that the V-cycle multigrid method (MGM)  can significantly lower the computational cost.
Many convergence estimates already exist for   the V-cycle MGM: for example,
using the regularity or approximation assumptions  of the elliptic PDEs, the results are obtained  in
[Bank \&  Douglas, SIAM J. Numer. Anal. \textbf{22}, 617-633  (1985); Bramble  \&   Pasciak, Math. Comp. \textbf{49},  311-329  (1987)]; in the case of  multilevel matrix algebras (like circulant, tau, Hartely)
[Aric\`{o}, Donatelli  \&   Serra-Capizzano, SIAM J. Matrix Anal. Appl.  \textbf{26}, 186-214  (2004); Aric\`{o} \&  Donatelli, Numer. Math. \textbf{105}, 511-547  (2007)], special prolongation operators are provided and the related convergence results are rigorously developed, using a functional approach.
In this paper we  derive  new uniform convergence estimates for the V-cycle MGM applied to symmetric positive definite Toeplitz block tridiagonal matrices, by also discussing few connections with previous results.
More concretely, the contributions of this paper are as follows: (1) It tackles the  Toeplitz systems directly for the elliptic PDEs.
(2)  Simple (traditional) restriction operator and prolongation operator are employed in order to handle general Toeplitz systems at each level of the recursion.
Such a technique is then applied to systems of algebraic equations generated by the difference scheme of the two-dimensional fractional Feynman-Kac equation, which describes the joint probability density function of non-Brownian  motion. In particular,  we consider the two coarsening strategies, i.e.,  doubling the mesh size (geometric MGM) and  Galerkin approach  (algebraic MGM),  which lead to the distinct  coarsening stiffness matrices in the general case: however, several numerical experiments show that the two algorithms produce almost the same error behaviour.
\keywords{V-cycle multigrid method \and Block tridiagonal matrix \and Fractional Feynman-Kac equation}
\end{abstract}


\section{Introduction}
When considering iterative solvers for large linear systems stemming from the approximation of partial differential equations (PDEs),  multigrid methods (MGM) (such as backslash cycle, V-cycle and W-cycle) have often been shown to provide algorithms with optimal order of complexity \cite {Bramble:87,Hackbusch:85}.
Using  the regularity or approximation assumptions of the elliptic PDEs, the complete proof on the uniform convergence of the MGM for second order elliptic equation has been discussed in  \cite{Bank:85,Bramble:87} and several outstanding works have been derived in this direction, e.g., \cite{Bramble:91,Brenner:08,Xu:02}.
On the hand, concerning linear systems with coefficient matrix belonging to multilevel matrix algebras (like circulant, tau, Hartely), the proof of convergence  of  the  two-grid methods are given in  \cite{Arico:04,Arico:07,Serra-Capizzano:02} and the level independence is discussed in \cite{Serra-Capizzano:02},  for  special prolongation operators  \cite{Fiorentino:91,Fiorentino:96} associated to the symbol of the coefficient matrices; moreover, the uniform convergence  of the  V-cycle MGM is further derived in \cite{Arico:04} and extended in \cite{Arico:07,Bolten:15} for the elliptic Toeplitz and PDEs matrices.
In recent years, the multigrid methods have also been applied to solve the fractional differential equations (FDEs) \cite{Chen:14,ChenDeng:15,Pang:12}; for  time-dependent FDEs \cite{Chen:14,Pang:12}, the two-grid method is used and the convergence analysis is performed by following the ideas in \cite{Chan:98,Fiorentino:96}, in which different prolongation operators are required at each recursion level, when dealing with general Toeplitz systems.
In this paper, we use the simple (traditional)  restriction operator and prolongation operator to handle general Toeplitz systems directly for the elliptic PDEs.
Then we  derive  new   uniform convergence  estimates regarding the V-cycle MGM for symmetric positive definite Toeplitz block tridiagonal matrices, which can be  applied to   the fractional Feynman-Kac (FFK) equation \cite{Carmi:10,Turgeman:09}.
Regarding numerical experiments, we consider two coarsening strategies for MGM. The first is based on simple coarsening strategy, i.e., doubling the mesh size ($h\rightarrow 2h$) in each spatial direction, leading to the so called geometric MGM: in this case
the coarse stiffness matrix is the natural analog of  the finest grid coefficient matrix. The second strategy is based on the  Galerkin approach and is refereed to as algebraic MGM \cite{Bolten:15,Trottenberg:01}. From the basic theoretical point of view, the major advantage of Galerkin approach is that it satisfies the variational principle; however, from the practical point of view, we find that they  almost lead to the same numerical results.

 After obtaining the uniform convergence  for the V-cycle MGM, we apply it to the  difference scheme for the backward fractional Feynman-Kac equation \cite{Carmi:10}, which describes the distribution of the functional of the trajectories of non-Brownian motion,
 defined by $U\rightarrow A(U)=\int_0^t U[{\bf x}(\tau)]d\tau$. There are many special or interesting choices for $U({\bf x})$, e.g.,
taking $U({\bf x})=1$ in a given domain and zero otherwise, this functional can be used in kinetic studies of chemical reactions that take place exclusively in the domain \cite{Bar-Haim:98,Carmi:10}. For inhomogeneous disorder dispersive systems, the motion of the particles is non-Brownian, and $U({\bf x})$ is taken as ${\bf x}$ or ${\bf x}^2$ \cite{Carmi:10}.
The multi-dimensional backward fractional Feynman-Kac equation is given as \cite{Carmi:10,Turgeman:09}
\begin{equation}\label{1.1}
\frac{\partial}{\partial t}G({\bf x},\rho,t)=\kappa_\alpha \, {^s\!}D_t^{1-\alpha}\Delta G({\bf x},\rho,t)-\rho U({\bf x})G({\bf x},\rho,t)
\quad \forall {\bf x} \in  \mathbb{{R}}^n,
\end{equation}
where  $G({\bf x},\rho ,t)=\int_0^\infty G({\bf x},A,t)e^{-\rho A}dA$, $Re (\rho)>0$, $U({\bf x})>0$, the diffusion coefficient $\kappa_\alpha $ is a positive constant and $\alpha \in (0,1)$, and the Riemann-Liouville fractional substantial derivative is defined by \cite{Chen:13}
 \begin{equation*}
{^s\!}D_t^\alpha G({\bf x},\rho,t)={^s\!}D_t^m[{^s\!}I_t^{m-\alpha} G({\bf x},\rho,t)],
\end{equation*}
with the fractional substantial integral ${^s\!}I_t^\beta $ ($\beta>0$) expressed as
\begin{equation*}
{^s\!}I_t^\beta G({\bf x},\rho,t)=\frac{1}{\Gamma(\beta)}\int_{0}^t{\left(t-\tau\right)^{\beta-1}}e^{-\rho U({\bf x})(t-\tau)}{G({\bf x},\rho,\tau)}d\tau, ~~~~t>0.
\end{equation*}
Similarly, we can  define the Caputo fractional substantial derivative of order $\alpha$ as
\begin{equation*}
 {_c^sD}_t^\alpha G({\bf x},\rho,t)={^s\!}I_t^{m-\alpha}[{^s\!}D_t^m G({\bf x},\rho,t)].
\end{equation*}
Then (\ref{1.1}) can be rewritten in the form \cite{Deng:14}
\begin{equation}\label{1.2}
\begin{split}
{^s_c}{D}_t^\alpha G({\bf x},\rho,t)
&={^s\!}D_t^\alpha [G({\bf x},\rho,t)-e^{-\rho t}G({\bf x},\rho,0)]\\
&={^s\!}D_t^\alpha G({\bf x},\rho,t)-\frac{t^{-\alpha}e^{-\rho  t}}{\Gamma(1-\alpha)}G({\bf x},\rho,0)
  = \kappa_\alpha \Delta G({\bf x},\rho,t).
\end{split}
\end{equation}

The  outline of the paper is as follows. In  the next section,  we derive the convergence estimates of the V-cycle MGM
for the symmetric positive definite Toeplitz tridiagonal matrix. For symmetric positive definite Toeplitz block tridiagonal matrix,  the convergence estimates of the V-cycle MGM are given in Section 3.
In Section 4, we present the compact difference scheme for (\ref{1.2}) in 1D, and the centered difference scheme for
(\ref{1.2}) in 2D.  Then in Section 5, we use the presented  V-cycle MGM framework for the efficient computational solution of the resulting algebraic systems of linear equations. Results of numerical experiments are reported and discussed in Section 6, in order to show the effectiveness of the presented schemes. Finally, we conclude the paper with some remarks.

\section{Uniform convergence  of V-Cycle MGM  for 1D}
Let us first consider the simple algebraic system (1D)
\begin{equation}\label{2.1}
  A_h\nu^h=f_h,
\end{equation}
where
$$A_h={\rm tridiag}(a_1, a_0, a_1)~~ {\rm with}~~a_0\geq 2|a_1|~~ {\rm and}~~a_0>0.$$

Let $\Omega \in (0,b)$ and  the mesh points $x_i=ih$, $h=b/(M+1)$.
To describle  the MGM, we need to define the following  multiple level of grids
\begin{equation}\label{2.2}
  \mathcal{B}_k=\Big\{x_i^k=\frac{i}{2^k}b,\, i=1: M_k \Big\} ~~{\rm with}~~M_k=2^k-1,\, k=1 : K,
\end{equation}
where $\mathcal{B}_K=\mathcal{B}_h$ is the finest mesh and $M=2^K-1$.
We adopt the notation that $\mathcal{B}_k$ represents not only the grid with grid spacing $h_k=2^{(K-k)}h$, but also the space of vectors defined on that grid.
For the one dimensional  system, the  restriction operator $I_k^{k-1}$ and prolongation operator $I_{k-1}^k$ are, respectively, defined by \cite[p.\,438-454]{Saad:03}
 \begin{equation}\label{2.3}
\begin{split}
\nu^{k-1}=I_k^{k-1}\nu^k ~~{\rm with}~~ \nu_i^{k-1}=\frac{1}{4}\left(\nu_{2i-1}^{k}+2\nu_{2i}^{k}+\nu_{2i+1}^{k}\right),
~~~i=1:M_{k-1},
\end{split}
\end{equation}
and
 \begin{equation}\label{2.4}
\begin{split}
\nu^{k}=I_{k-1}^{k}\nu^{k-1}~~{\rm with}~~ I_{k-1}^{k} =2\left(I_k^{k-1}\right)^T,
\end{split}
\end{equation}
where
\begin{equation}\label{2.5}
I_{k-1}^k=\frac{1}{2}\left [ \begin{matrix}
                      1   &     &          & \\
                      2   &     &          &  \\
                      1   &  1  &          &   \\
                          &  2  &          &    \\
                          &  1  &  \ddots  &     \\
                          &     &  \ddots  &      \\
                          &     &  \ddots  & 1     \\
                          &     &          & 2      \\
                          &     &          & 1       \\
 \end{matrix}
 \right ]_{M_{k}\times M_{k-1}}.
\end{equation}
The coarse problem is typically defined by the Galerkin approach
\begin{equation}\label{2.6}
  A_{k-1}=I_k^{k-1}A_kI_{k-1}^{k},
\end{equation}
and the intermediate $(k,k-1)$ coarse grid correction operator is
\begin{equation}\label{2.7}
  T^k=I_k- I_{k-1}^{k}A_{k-1}^{-1}I_k^{k-1}A_k=I_k- I_{k-1}^{k}P_{k-1}
\end{equation}
with
\begin{equation*}
  P_{k-1}=A_{k-1}^{-1}I_k^{k-1}A_k.
\end{equation*}
Let $K_k$ be the iteration matrix of the smoothing operator. In this work, we
 take $K_k$ to be  the weighted (damped) Jacobi iteration matrix
\begin{equation}\label{2.8}
  K_k=I-S_kA_k,~~{\rm where}~~S_{k}:=S_{k,\omega}=\omega D_k^{-1}
\end{equation}
with a weighting  factor $\omega \in (0,1/2]$, and $D_k$ is the diagonal of $A_k$.

A multigrid process can be regarded as defining a sequence of operators $B_k:\mathcal{B}_k\mapsto \mathcal{B}_k$
which is an approximate inverse of $A_k$ in the sense that $||I-B_kA_k||$ is bounded away from one.
The V-cycle multigrid algorithm \cite{Bramble:87} is provided in Algorithm \ref{MGM}.
\begin{algorithm*}
\caption{ V-cycle Multigrid Algorithm: Define $B_1=A_1^{-1}$. Assume that $B_{k-1}:\mathcal{B}_{k-1}\mapsto \mathcal{B}_{k-1}$ is defined.
We shall now define $B_k:\mathcal{B}_{k}\mapsto \mathcal{B}_{k}$ as an approximate iterative solver for the equation  $A_k\nu^k=f_k$.}
\label{MGM}
\begin{algorithmic}[1]
\STATE Pre-smooth: Let $S_{k,\omega}$ be defined by (\ref{2.8}), $\nu^k_0=0$, $l=1: m_1$, and
  $$\nu^k_l=\nu^k_{l-1}+S_{k,\omega_{pre}}(f_k-A_k\nu^k_{l-1}).$$
\STATE Coarse grid correction: Denote $e^{k-1} \in \mathcal{B}_{k-1}$ as the approximate solution of the residual equation $A_{k-1}e=I_k^{k-1}(f_k-A_k\nu^k_{m_1})$
with the iterator $B_{k-1}$:
$$e^{k-1}=B_{k-1}I_k^{k-1}(f_k-A_k\nu^k_{m_1}).$$
\STATE Post-smooth:~~$\nu^k_{m_1+1}=\nu^k_{m_1}+I_{k-1}^{k}e^{k-1}$, $l=m_1+2: m_1+ m_2$, and
$$\nu^k_l=\nu^k_{l-1}+S_{k,\omega_{post}}(f_k-A_k\nu^k_{l-1}).$$
\STATE Define $B_kf_k=\nu^k_{m_1+m_2}$.
\end{algorithmic}
\end{algorithm*}

Since the matrix $A:=A_h$ is symmetric positive definite,  we can define the following inner products \cite[p. 78]{Ruge:87}
\begin{equation}\label{2.9}
  ( u,v  )_D=(Du,v), \quad (u,v)_{A}=(Au,v), \quad (u,v)_{AD^{-1}A}=(Au,Av)_{D^{-1}},
\end{equation}
where $(\cdot,\cdot)$ is the usual Euclidean inner product.
Here the finest grid operator is $A_h$ or $A_K$  with the finest grid size $h$;
and the coarse grid operators   $A_{k-1}=I_k^{k-1}A_kI_{k-1}^{k}$ are defined by the Galerkin approach (\ref{2.6})   with the grid sizes $\{2^{K-k}h\}_{k=1}^{K-1}$.

\subsection{Improved framework for the MGM}
Based on the framework of \cite{Bramble:87,Xu:02},  we now present the  estimates on the convergence rate of the MGM, namely,
\begin{equation*}
||I-B_kA_k||_{A_k}<1,
\end{equation*}
where $I$ is identity matrix and $A_k$, $B_k$ are given in   Algorithm  \ref{MGM}.

Assume that the following two assumptions are satisfied, i.e.,
\begin{equation}\label{2.10}
\frac{\omega}{\lambda_{\max}(A_k) }(\nu^k,\nu^k) \leq (S_k\nu^k,\nu^k)\leq (A_k^{-1}\nu^k,\nu^k)~~~\forall \nu^k\in \mathcal{B}_k,
\end{equation}
and
\begin{equation}\label{2.11}
||T^k\nu^k||^2_{A_k}\leq m_0 ||A_k\nu^k||_{D_k^{-1}}^2~~\forall \nu^k\in \mathcal{B}_k,
\end{equation}
where  $\omega$ is defined by (\ref{2.8}).
For the complete proof on the uniform convergence of  the MGM,  there exists the following lemma.
\begin{lemma} [\cite {Bramble:87,Xu:02}]\label{lemma2.1}
If  $A_k$ satisfies (\ref{2.10}) and (\ref{2.11}),  then
$$||I-B_kA_k||_{A_k} \leq \frac{m_0}{2l\omega+m_0}<1~~{\rm with}~~~1\leq k\leq K,$$
where  the operator $B_k$ is defined by the  V-cycle method in   Algorithm  \ref{MGM}  and  $l$ is the number of smoothing steps.
\end{lemma}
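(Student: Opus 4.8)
The plan is to argue by induction on the level index $k$, carried out for the error propagation operator $E_k:=I-B_kA_k$ in the energy inner product $(\cdot,\cdot)_{A_k}$; the statement propagated from level to level is that $E_k$ is $A_k$-self-adjoint, that $E_k\ge 0$ with respect to $(\cdot,\cdot)_{A_k}$, and that $\|E_k\|_{A_k}\le\delta$, where $\delta:=m_0/(2l\omega+m_0)$. The base case $k=1$ is trivial because $B_1=A_1^{-1}$ forces $E_1=0$. For the inductive step the first move is to unwind Algorithm \ref{MGM}: writing $B_{k-1}=(I-E_{k-1})A_{k-1}^{-1}$ and invoking (\ref{2.6})--(\ref{2.8}), one obtains the recursion
\begin{equation*}
E_k=(I-S_kA_k)^{l}\,M_k\,(I-S_kA_k)^{l},\qquad M_k:=T^k+I_{k-1}^{k}E_{k-1}P_{k-1},
\end{equation*}
where $l$ smoothing steps are applied before and $l$ after the coarse correction (the total $2l$ is what ultimately appears in the constant). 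A short structural check, using the Galerkin definition (\ref{2.6}) together with $I_{k-1}^{k}=2(I_k^{k-1})^{T}$ from (\ref{2.4}), shows that $T^k$ is an $A_k$-orthogonal projection, that $\|I_{k-1}^{k}z\|_{A_k}^2=2(A_{k-1}z,z)$, and hence, by the inductive hypothesis on $E_{k-1}$, that $M_k$ and $E_k$ are $A_k$-self-adjoint and nonnegative; this is what allows us to identify $\|E_k\|_{A_k}$ with $\sup_v (A_kE_kv,v)/(A_kv,v)$.

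The next ingredient is a smoothing estimate extracted from (\ref{2.10}). Since (\ref{2.10}) implies that the $A_k$-self-adjoint operator $S_kA_k$ has its $A_k$-spectrum in $[0,1]$, every smoothing step satisfies $\|(I-S_kA_k)u\|_{A_k}^2\le\|u\|_{A_k}^2-(S_kA_ku,A_ku)$; iterating this and combining it with the elementary scalar inequality $(1-\mu)^{2l}(1+2l\mu)\le 1$ on $[0,1]$ yields, for $w:=(I-S_kA_k)^{l}v$,
\begin{equation*}
\|v\|_{A_k}^2\ \ge\ \|w\|_{A_k}^2+2l\,(S_kA_kw,A_kw).
\end{equation*}
The factor $2l$ is precisely where the symmetric placement of the smoothing is exploited: because $E_k$ is $A_k$-self-adjoint one may write $(A_kE_kv,v)=(A_kM_kw,w)$ with a single $w=(I-S_kA_k)^{l}v$, so the two blocks of $l$ smoothing steps merge into $(I-S_kA_k)^{2l}$ at the level of the energy norm. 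Using the projection structure of $T^k=I-I_{k-1}^{k}P_{k-1}$, the identity $\|I_{k-1}^{k}z\|_{A_k}^2=2(A_{k-1}z,z)$, and the inductive bound $\|E_{k-1}\|_{A_{k-1}}\le\delta$, the coarse-grid contribution is controlled by
\begin{equation*}
(A_kM_kw,w)\ \le\ (1-\delta)\,(A_kT^kw,w)+\delta\,\|w\|_{A_k}^2,
\end{equation*}
and the approximation assumption (\ref{2.11}), rewritten as $(A_kT^kw,w)=\|T^kw\|_{A_k}^2\le m_0\|A_kw\|_{D_k^{-1}}^2=(m_0/\omega)\,(S_kA_kw,A_kw)$, expresses the first term through precisely the same quantity that occurs in the smoothing estimate.

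Finally one combines the three displays: substituting the approximation bound into the coarse-grid bound and comparing it with $\delta\|v\|_{A_k}^2\ge\delta\|w\|_{A_k}^2+2l\delta\,(S_kA_kw,A_kw)$ reduces the target inequality $\|E_k\|_{A_k}\le\delta$ to the scalar requirement $(1-\delta)(m_0/\omega)\le 2l\delta$, equivalently $\delta\ge m_0/(2l\omega+m_0)$, which holds with equality for the chosen $\delta$; this closes the induction and delivers the level-independent bound $\delta<1$. I expect the main obstacle to be not any single inequality but keeping the recursion self-sustaining, i.e.\ verifying that $\delta=m_0/(2l\omega+m_0)$ is a genuine fixed point so that the contraction factor does not deteriorate as $k$ grows (the V-cycle versus W-cycle phenomenon). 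That in turn hinges on producing the sharp smoothing constant $2l$ rather than $l$, and on confirming that replacing the exact coarse solve $A_{k-1}^{-1}$ by $B_{k-1}$ perturbs the exact two-grid correction only by the benign amount $\delta\bigl(\|w\|_{A_k}^2-\|T^kw\|_{A_k}^2\bigr)$. A minor technicality to dispatch en route is the possibility that $I-S_kA_k$ is singular, in which case the collapse step must be restricted to the $A_k$-orthogonal complement of its kernel, on which $E_k$ acts as zero in any case.
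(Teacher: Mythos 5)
Your proof is correct, and it is essentially the classical argument that the paper itself does not reproduce but delegates to the cited references (Bramble--Pasciak and Xu--Zikatanov): induction on levels for the $A_k$-self-adjoint, nonnegative error operator $E_k=(I-S_kA_k)^{l}\bigl(T^k+I_{k-1}^{k}E_{k-1}P_{k-1}\bigr)(I-S_kA_k)^{l}$, the smoothing estimate from the scalar bound $(1-\mu)^{2l}(1+2l\mu)\le 1$, the perturbation bound $(A_kM_kw,w)\le(1-\delta)(A_kT^kw,w)+\delta\|w\|_{A_k}^2$, and assumption (\ref{2.11}) rewritten through $S_k=\omega D_k^{-1}$, closed by the fixed-point relation $(1-\delta)m_0/\omega\le 2l\delta$ at $\delta=m_0/(2l\omega+m_0)$. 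All the key identities you use (that $T^k$ is an $A_k$-orthogonal projection under the Galerkin definition (\ref{2.6}), that the scaling $I_{k-1}^{k}=2(I_k^{k-1})^T$ is harmlessly absorbed, and that the symmetric pre/post-smoothing yields the factor $2l$ rather than $l$) check out, so the reconstruction faithfully matches the cited proof.
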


It is well known that the framework of the convergence analysis of the MGM \cite{Bramble:87,Xu:02}
 is  based on the verification (\ref{2.10}) and (\ref{2.11}).
However, it is not at all easy to prove the assumption (\ref{2.11}) in general, since it needs to  solve $A_{k-1}^{-1}$ in (\ref{2.7}).
Here, we replace  the condition (\ref{2.11}) by the following Lemma, which simplifies the theoretical investigations substantially.

\begin{lemma}\label{lemma2.2}
Let $A_k$ be a symmetric positive definite matrix and
\begin{equation}\label{2.12}
   \min_{\nu^{k-1} \in \mathcal{B}_{k-1} }||\nu^k-I_{k-1}^k\nu^{k-1}||_{A_k}^2\leq m_0 ||A_k\nu^k||_{D_k^{-1}}^2 \quad  \forall \nu^k \in \mathcal{B}_k
\end{equation}
with  $m_0>0$ independent of $\nu^k$. Then
\begin{equation*}
||T^k\nu^k||^2_{A_k}\leq m_0 ||A_k\nu^k||_{D_k^{-1}}^2~~\forall \nu^k\in \mathcal{B}_k.
\end{equation*}
\end{lemma}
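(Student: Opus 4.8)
The plan is to identify the intermediate coarse grid correction operator $T^k$ of (\ref{2.7}) with an orthogonal projection in the energy inner product $(\cdot,\cdot)_{A_k}$ defined in (\ref{2.9}). Concretely, I will show that under the Galerkin definition (\ref{2.6}) together with the transpose relation (\ref{2.4}), $T^k$ is exactly the $A_k$-orthogonal projection onto the $A_k$-orthogonal complement of $\mathrm{range}(I_{k-1}^k)$. Once this is in place, $\|T^k\nu^k\|_{A_k}^2$ coincides with the minimum appearing on the left-hand side of (\ref{2.12}), and the conclusion is immediate.

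First I would set $P:=I_{k-1}^k$ and use (\ref{2.4}), i.e. $I_k^{k-1}=\tfrac12 P^{T}$, to rewrite the coarse operator as $A_{k-1}=I_k^{k-1}A_kI_{k-1}^{k}=\tfrac12 P^{T}A_kP$. Since $P$ has full column rank (evident from the explicit form (\ref{2.5}): the submatrix formed by its even-indexed rows is the identity) and $A_k$ is symmetric positive definite, the matrix $P^{T}A_kP$ is invertible, hence $A_{k-1}$ is invertible with $A_{k-1}^{-1}=2(P^{T}A_kP)^{-1}$. Substituting into the operator $I_{k-1}^{k}P_{k-1}=I_{k-1}^{k}A_{k-1}^{-1}I_k^{k-1}A_k$ appearing in (\ref{2.7}), the factors $2$ and $\tfrac12$ cancel and I obtain
\[
\Pi_k:=I_{k-1}^{k}A_{k-1}^{-1}I_k^{k-1}A_k=P(P^{T}A_kP)^{-1}P^{T}A_k .
\]
Next I would check the two defining properties of the energy projection: $\Pi_k^2=\Pi_k$ (cancel $P^{T}A_kP$ against its inverse) and $(A_k\Pi_ku,v)=(A_ku,\Pi_kv)$ for all $u,v$ (a one-line computation using symmetry of $A_k$ and of $(P^{T}A_kP)^{-1}$). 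Thus $\Pi_k$ is the $A_k$-orthogonal projection onto $\mathrm{range}(P)=\mathrm{range}(I_{k-1}^{k})$, and therefore $T^k=I_k-\Pi_k$ is the $A_k$-orthogonal projection onto $\mathrm{range}(I_{k-1}^{k})^{\perp_{A_k}}$. The standard characterization of an orthogonal projection then gives, for every $\nu^k\in\mathcal{B}_k$,
\[
\|T^k\nu^k\|_{A_k}^2=\min_{w\in \mathrm{range}(I_{k-1}^{k})}\|\nu^k-w\|_{A_k}^2=\min_{\nu^{k-1}\in\mathcal{B}_{k-1}}\|\nu^k-I_{k-1}^{k}\nu^{k-1}\|_{A_k}^2 .
\]
Combining this identity with hypothesis (\ref{2.12}) yields $\|T^k\nu^k\|_{A_k}^2\le m_0\|A_k\nu^k\|_{D_k^{-1}}^2$, which is the assertion.

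There is no deep obstacle; the one point requiring care is the bookkeeping of the constant $2$ in (\ref{2.4}), which is precisely what forces $\Pi_k$ to equal the Galerkin (energy) projection $P(P^{T}A_kP)^{-1}P^{T}A_k$ rather than merely some oblique projection onto $\mathrm{range}(I_{k-1}^{k})$. With a different normalization of restriction versus prolongation, $T^k$ would still annihilate $\mathrm{range}(I_{k-1}^{k})$ in one direction but would not be $A_k$-self-adjoint, and the minimization identity — the crux of the argument — would break down. So the ``hard part'' reduces to observing that $I_k^{k-1}$ and $I_{k-1}^{k}$ are adjoint up to scaling, which is exactly the content of (\ref{2.4}).
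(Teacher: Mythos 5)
Your proposal is correct and follows essentially the same route as the paper: both arguments rest on the identity $\|T^k\nu^k\|_{A_k}^2=\min_{\nu^{k-1}\in\mathcal{B}_{k-1}}\|\nu^k-I_{k-1}^k\nu^{k-1}\|_{A_k}^2$, which the paper simply quotes as the variational principle for the coarse grid correction (citing Trottenberg et al.), after which (\ref{2.12}) gives the bound. The only difference is that you prove this identity yourself, verifying via (\ref{2.4}) and (\ref{2.6}) that the scaling factors cancel so that $I_{k-1}^{k}A_{k-1}^{-1}I_k^{k-1}A_k$ is the $A_k$-orthogonal projection onto $\mathrm{range}(I_{k-1}^k)$ — a correct and self-contained substitute for the citation.
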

\begin{proof}
From (\ref{2.12}) and the variational principle for coarse grid operator $T^k$ (see the corollary of   \cite[p.\,431]{Trottenberg:01}), we obtain
\begin{equation*}
  ||T^k\nu^k||^2_{A_k}=  \min_{\nu^{k-1} \in \mathcal{B}_{k-1} }||\nu^k-I_{k-1}^k\nu^{k-1}||_{A_k}^2\leq m_0 ||A_k\nu^k||_{D_k^{-1}}^2.
\end{equation*}
The proof is completed.
\end{proof}

Using Lemmas \ref{lemma2.1} and \ref{lemma2.2}, we have
\begin{theorem}\label{theorem2.3}
If  $A_k$ satisfies (\ref{2.10}) and (\ref{2.12}),  then
$$||I-B_kA_k||_{A_k} \leq \frac{m_0}{2l\omega+m_0}<1~~{\rm with}~~~1\leq k\leq K,$$
where  the operator $B_k$ is defined by the  V-cycle method in   Algorithm  \ref{MGM}  and  $l$ is the number of smoothing steps.
\end{theorem}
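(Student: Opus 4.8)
The statement is a direct corollary of the two lemmas already established, so the plan is simply to chain them together. First I would observe that the hypotheses of Theorem \ref{theorem2.3} are (\ref{2.10}) and (\ref{2.12}), whereas Lemma \ref{lemma2.1} requires (\ref{2.10}) and (\ref{2.11}). Since (\ref{2.10}) is assumed outright, the only gap is to promote (\ref{2.12}) to (\ref{2.11}); but this is exactly the content of Lemma \ref{lemma2.2}, whose hypothesis is (\ref{2.12}) (with $A_k$ symmetric positive definite, which holds here) and whose conclusion is (\ref{2.11}).

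Concretely, the steps are: (i) invoke Lemma \ref{lemma2.2} with the given $A_k$ and the assumed inequality (\ref{2.12}) to deduce that $\|T^k\nu^k\|_{A_k}^2 \le m_0 \|A_k\nu^k\|_{D_k^{-1}}^2$ for all $\nu^k \in \mathcal{B}_k$, i.e.\ that (\ref{2.11}) holds with the same constant $m_0$; (ii) note that (\ref{2.10}) is part of the hypothesis of the theorem and is unchanged; (iii) apply Lemma \ref{lemma2.1} to $A_k$, which now satisfies both (\ref{2.10}) and (\ref{2.11}), to conclude
\begin{equation*}
\|I - B_k A_k\|_{A_k} \le \frac{m_0}{2l\omega + m_0} < 1, \qquad 1 \le k \le K,
\end{equation*}
with $B_k$ the V-cycle operator of Algorithm \ref{MGM} and $l$ the number of smoothing steps. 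One should also remark that the constant $m_0$ produced in step (i) is independent of $\nu^k$ (this is part of the hypothesis of Lemma \ref{lemma2.2}) and of the level $k$ under the standing assumptions, which is what makes the final bound uniform in $k$.

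There is essentially no obstacle: the whole argument is a one-line substitution of Lemma \ref{lemma2.2} into Lemma \ref{lemma2.1}, and no new estimate is needed. The only thing worth a sentence of care is making explicit that the constant $m_0$ in (\ref{2.12}) transfers verbatim to (\ref{2.11}) via Lemma \ref{lemma2.2} (the lemma's proof uses the variational characterization $\|T^k\nu^k\|_{A_k}^2 = \min_{\nu^{k-1}} \|\nu^k - I_{k-1}^k\nu^{k-1}\|_{A_k}^2$, so the bound is preserved, not merely dominated), and hence the bound $m_0/(2l\omega+m_0)$ in the conclusion is the same constant the user supplies in the hypothesis. The real mathematical work lies elsewhere — namely in later verifying (\ref{2.10}) and (\ref{2.12}) for the specific Toeplitz (block) tridiagonal matrices arising from the Feynman-Kac discretization — but for Theorem \ref{theorem2.3} itself the proof is the trivial composition just described.
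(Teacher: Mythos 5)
Your proposal is correct and matches the paper's reasoning exactly: the paper states Theorem \ref{theorem2.3} immediately after the phrase ``Using Lemmas \ref{lemma2.1} and \ref{lemma2.2}'', i.e.\ it likewise upgrades (\ref{2.12}) to (\ref{2.11}) via Lemma \ref{lemma2.2} and then invokes Lemma \ref{lemma2.1}. Nothing further is needed.
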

\subsection{Convergence estimates  of MGM for  1D}
We now give a complete proof on the uniform convergence of the MGM for the algebraic system (\ref{2.1}),
i.e., we need to examine  the two assumptions (\ref{2.10}) and (\ref{2.12}).

\begin{lemma}\label{lemma2.4}
Let $A^{(1)}=\{a_{i,j}^{(1)}\}_{i,j=1}^{\infty}$ with $a_{i,j}^{(1)}=a_{|i-j|}^{(1)}$ be a  symmetric  Toeplitz  matrix
and  $A^{(k)}=L_h^{H}A^{(k-1)}L_{H}^{h}$ with $L_h^{H}=4I_k^{k-1}$ and $L_H^{h}=(L_h^{H})^T$.
Then $A^{(k)}$ can be computed by
\begin{equation}\label{2.13}
\begin{split}
a_0^{(k)}
=&(4C_k+2^{k-1})a_0^{(1)}+\sum_{m=1}^{2\cdot2^{k-1}-1}{_0}C_m^ka_m^{(1)};\\
a_1^{(k)}
=&C_ka_0^{(1)}+\sum_{m=1}^{3\cdot2^{k-1}-1}{_1}C_m^k a_m^{(1)};\\
a_j^{(k)}
=&\sum_{m=(j-2)2^{k-1}}^{(j+2)2^{k-1}-1} {_j}C_m^k a_m^{(1)} \quad \forall j\geq 2 \quad \forall k\geq 2
    \end{split}
\end{equation}
with $C_k=2^{k-2}\cdot\frac{2^{2k-2}-1}{3}$. And
\begin{equation*}
{_0}C_m^k=\left\{ \begin{split}
&8C_k-(m^2-1)(2^k-m) ~~\quad~{\rm for}~~m=1: 2^{k-1};\\
&\frac{1}{3}(2^{k}-m-1)(2^{k}-m)(2^{k}-m+1)
~~{\rm for}~~m=2^{k-1}:2\cdot2^{k-1}-1;
 \end{split}
 \right.
\end{equation*}
${_1}C_m^k=$
\begin{equation*}
\left\{ \begin{split}
&2C_k+m^2\cdot2^{k-1}-\frac{2}{3}(m-1)m(m+1) ~~\quad~{\rm for}~~m=1: 2^{k-1};\\
&2C_k+(2^k-m)^2\cdot2^{k-1}-\frac{2}{3}(2^k-m-1)(2^k-m)(2^k-m+1)\\
& -\frac{1}{6}(m-2^{k-1}-1)(m-2^{k-1})(m-2^{k-1}+1)
~~\quad~{\rm for}~~~~m=2^{k-1}: 2\cdot2^{k-1};\\
&\frac{1}{6}(3\cdot2^{k-1}\!-\!m\!-\!1)(3\cdot2^{k-1}\!-\!m)(3\cdot2^{k-1}\!-\!m+1)
~~{\rm for}~~m=2\cdot2^{k-1}:3\cdot2^{k-1}-1;
 \end{split}
 \right.
\end{equation*}
and for $j\geq 2$,
\begin{equation*}
{_j}C_m^k=
\left\{ \begin{split}
&\varphi_1 ~~\quad~{\rm for}~~m=(j-2)2^{k-1}:(j-1)2^{k-1};\\
&\varphi_2
~~\quad~{\rm for}~~ m=(j-1)2^{k-1}: j2^{k-1};\\
&\varphi_3
~~\quad~{\rm for}~~ m=j2^{k-1}: (j+1)2^{k-1};
\\
&\varphi_4
~~\quad~{\rm for}~~ m=(j+1)2^{k-1}:(j+2)2^{k-1}-1,
 \end{split}
 \right.
\end{equation*}
where
\begin{equation*}
\begin{split}
\varphi_1=\frac{1}{6}(m-(j-2)2^{k-1}-1)(m-(j-2)2^{k-1})(m-(j-2)2^{k-1}+1);
 \end{split}
\end{equation*}
\begin{equation*}
\begin{split}
\varphi_2=&2C_k+(m-(j-1)2^{k-1})^2\cdot2^{k-1}\\
& -\frac{1}{6}(j2^{k-1}-m-1)(j2^{k-1}-m)(j2^{k-1}-m+1)\\
&-\frac{2}{3}(m-(j-1)2^{k-1}-1)(m-(j-1)2^{k-1})(m-(j-1)2^{k-1}+1);\\
 \end{split}
\end{equation*}
\begin{equation*}
\begin{split}
\varphi_3=&2C_k+((j+1)2^{k-1}-m)^2\cdot2^{k-1} \\
&-\frac{1}{6}(m-j2^{k-1}-1)(m-j2^{k-1})(m-j2^{k-1}+1)\\
&-\frac{2}{3}((j+1)2^{k-1}-m-1)((j+1)2^{k-1}-m)((j+1)2^{k-1}-m+1);\\
 \end{split}
\end{equation*}
\begin{equation*}
\begin{split}
\varphi_4=&\frac{1}{6}((j+2)2^{k-1}-m-1)((j+2)2^{k-1}-m)((j+2)2^{k-1}-m+1).
 \end{split}
\end{equation*}
\end{lemma}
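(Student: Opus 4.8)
The plan is to peel off one Galerkin step at a time, recognise the iterated restriction as a sampled ``tent'', and then read off the entries of $A^{(k)}$ as the autocorrelation of that tent.

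First I would unfold the definition at a single level. Since $L_h^{H}=4I_k^{k-1}$, row $i$ of $L_h^{H}$ carries the stencil $(1,2,1)$ in columns $2i-1,2i,2i+1$, and $L_H^{h}=(L_h^{H})^{T}$ carries the transposed (column) structure; because every row index $i\ge 1$ has its whole $(1,2,1)$ stencil inside the admissible column range, there is no boundary correction. Computing $(L_h^{H}A^{(k-1)}L_H^{h})_{i,i'}$ then shows that the entry depends only on $i-i'$, so $A^{(k)}$ is again a (semi-infinite) symmetric Toeplitz matrix, with
$$a^{(k)}_\ell=\sum_{r=-2}^{2}c_r\,a^{(k-1)}_{|2\ell+r|},\qquad (c_{-2},c_{-1},c_0,c_1,c_2)=(1,4,6,4,1),$$
where $(c_r)$ is the self-convolution of $(1,2,1)$.

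Next I would describe the composite restriction. As the same semi-infinite operator $R$ (with $R_{i,2i-1}=1$, $R_{i,2i}=2$, $R_{i,2i+1}=1$) acts at every level, $A^{(k)}=\mathcal L_k A^{(1)}\mathcal L_k^{T}$ with $\mathcal L_k=R^{\,k-1}$ (and $\mathcal L_1=I$). A path count — equivalently, the generating function $\prod_{l=0}^{k-2}\bigl(x^{-2^l}+2+x^{2^l}\bigr)$, which telescopes to a Fej\'er-type kernel — shows that row $i$ of $\mathcal L_k$ is the tent $\tau^{(k)}_p=2^{k-1}-|p|$ for $|p|\le 2^{k-1}-1$, centred at the fine index $2^{k-1}i$, consecutive rows being offset by $2^{k-1}$. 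Using the Toeplitz property, for any $i\ge 1$,
$$a^{(k)}_j=\bigl(A^{(k)}\bigr)_{i,\,i+j}=\sum_{n=-(2^k-2)}^{2^k-2}T^{(k)}_n\,a^{(1)}_{\,|2^{k-1}j+n|},\qquad T^{(k)}_n:=\sum_{u}\tau^{(k)}_u\tau^{(k)}_{u+n},$$
and $T^{(k)}$, the (even) autocorrelation of the tent, is a piecewise cubic sequence in $n$ with break points at $0,\pm 2^{k-1}$ and support $|n|\le 2^k-2$; on the two outer pieces one finds directly $T^{(k)}_n=\binom{2^k-|n|+1}{3}$, so $T^{(k)}_{2^{k-1}}=\binom{2^{k-1}+1}{3}=C_k$, and a short computation at $n=0$ gives $T^{(k)}_0=4C_k+2^{k-1}$.

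Finally I would match the three cases of (\ref{2.13}) to this formula. For $j\ge 2$ the argument $2^{k-1}j+n$ is always positive, so, writing $m=2^{k-1}j+n$ (and padding with the identically-zero boundary terms, $\varphi_1$ and $\varphi_4$ vanishing there, so that $m$ runs over $(j-2)2^{k-1}\le m\le (j+2)2^{k-1}-1$), we get ${}_jC^k_m=T^{(k)}_{m-2^{k-1}j}$; evaluating the cubic $T^{(k)}$ on its four length-$2^{k-1}$ pieces — by splitting $\sum_u\tau^{(k)}_u\tau^{(k)}_{u+n}$ at $u=0$, $u=-n$ and the support ends, which only involves the elementary sums $\sum q,\sum q^2,\sum q^3$ — yields $\varphi_1,\dots,\varphi_4$, with $\varphi_1,\varphi_4$ and $\varphi_2,\varphi_3$ mirror images of one another by the evenness of $T^{(k)}$. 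For $j=0$ and $j=1$ the absolute value folds the left part of the support back onto the positive axis, so each ${}_0C^k_m$, ${}_1C^k_m$ becomes a sum of at most two of those cubic pieces; together with the separate $a^{(1)}_0$-coefficient ($T^{(k)}_0=4C_k+2^{k-1}$ when $j=0$, $T^{(k)}_{2^{k-1}}=C_k$ when $j=1$) this reproduces exactly the stated formulas. The main obstacle is clerical rather than conceptual: carrying out this last case split — four cubic pieces, their reflections, and the extra folding when $j=0,1$ — with all ranges and signs correct; checking the closed forms against $k=2,3$ is a useful safeguard. One could instead bypass the tent picture and prove (\ref{2.13}) by a direct induction on $k$, substituting the inductive closed form for $a^{(k-1)}$ into the five-term recurrence above and re-summing, at the price of still heavier index bookkeeping.
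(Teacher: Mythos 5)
Your proposal is correct, but it takes a genuinely different route from the paper. The paper's own proof (in the Appendix) first records the single-level five-term recursion $a_0^{(k)}=6a_0^{(k-1)}+8a_1^{(k-1)}+2a_2^{(k-1)}$, $a_j^{(k)}=a_{2j-2}^{(k-1)}+4a_{2j-1}^{(k-1)}+6a_{2j}^{(k-1)}+4a_{2j+1}^{(k-1)}+a_{2j+2}^{(k-1)}$ — exactly your $(1,4,6,4,1)$ step — and then establishes (\ref{2.13}) by induction on $k$, substituting the assumed closed forms for level $s$ into the recursion and re-summing to recover the coefficients ${_j}C_m^{s+1}$; this is precisely the ``heavier index bookkeeping'' alternative you mention at the end. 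You instead compose the $k-1$ restriction steps into one operator, identify its rows (via the telescoping product $\prod_l(x^{-2^l}+2+x^{2^l})$) as the tent $\tau^{(k)}_p=2^{k-1}-|p|$, and read off $a_j^{(k)}=\sum_n T^{(k)}_n a^{(1)}_{|2^{k-1}j+n|}$ with $T^{(k)}$ the autocorrelation of the tent, a piecewise cubic with knots at $0,\pm 2^{k-1},\pm 2^k$; the cases $j\ge 2$ match the four pieces $\varphi_1,\dots,\varphi_4$ directly, while $j=0,1$ arise by folding the negative lags, which reproduces $4C_k+2^{k-1}$ and $C_k$ as the coefficients of $a_0^{(1)}$. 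Your key values check out ($T^{(k)}_0=4C_k+2^{k-1}$, $T^{(k)}_{2^{k-1}}=C_k$, outer pieces $\binom{2^k-|n|+1}{3}=\varphi_1$), and the zero-padding at $m=(j\mp2)2^{k-1}$ is consistent since $\varphi_1,\varphi_4$ vanish exactly where $T^{(k)}$ does. What your route buys is a structural explanation of the coefficients — their piecewise-cubic form, symmetry, and the constants $C_k$ appear automatically from the Fej\'er-type kernel, with no need to guess the closed form in advance; what the paper's induction buys is a self-contained elementary verification that never leaves the one-level recursion. To turn your sketch into a complete proof you would still have to write out the middle-piece evaluation of $T^{(k)}_n$ and the folded sums for $j=0,1$ explicitly, but that is routine power-sum computation, not a conceptual gap.
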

\begin{proof}
See the Appendix.
\end{proof}

\begin{corollary}\label{corollary2.5}
Let $A^{(k)}=I_k^{k-1}A^{(k-1)}I_{k-1}^{k}$ with $A^{(1)}={\rm tridiag}(a_1, a_0, a_1)$. Then
$$A^{(k)}={\rm tridiag}(a_1^{(k)}, a_0^{(k)}, a_1^{(k)}),$$
where
$$a_0^{(k)}=\frac{1}{8^{k-1}}\left[\left(4C_k+2^{k-1}\right)a_0+8C_k a_1 \right],$$
and
$$a_1^{(k)}=\frac{1}{8^{k-1}}\left[C_ka_0+\left(2C_k+2^{k-1}\right) a_1 \right].$$
\end{corollary}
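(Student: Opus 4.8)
The plan is to obtain Corollary~\ref{corollary2.5} as a direct specialization of Lemma~\ref{lemma2.4}, after reconciling the two (slightly different) coarsening recursions. Lemma~\ref{lemma2.4} builds $A^{(k)}=L_h^{H}A^{(k-1)}L_H^{h}$ with $L_h^{H}=4I_k^{k-1}$, whereas the corollary uses the pure Galerkin product $A^{(k)}=I_k^{k-1}A^{(k-1)}I_{k-1}^{k}$. Since by (\ref{2.4}) we have $L_H^{h}=(L_h^{H})^T=4(I_k^{k-1})^T=2I_{k-1}^{k}$, one step of the Lemma's recursion is exactly $8$ times one Galerkin step; hence, writing temporarily $\widetilde A^{(k)}$ for the matrices of Lemma~\ref{lemma2.4} and $A^{(k)}$ for those in the corollary, a one-line induction gives $A^{(k)}=8^{-(k-1)}\widetilde A^{(k)}$. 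So the first step is to record this normalization, after which it suffices to evaluate the closed forms (\ref{2.13}) on the tridiagonal datum $a_0^{(1)}=a_0$, $a_1^{(1)}=a_1$, $a_m^{(1)}=0$ for $m\ge 2$, and divide the outcome by $8^{k-1}$.

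Next I would carry out that substitution. With $a_m^{(1)}=0$ for $m\ge2$, only the $m=1$ term survives in the first two lines of (\ref{2.13}); evaluating the piecewise coefficients at $m=1$ gives ${_0}C_1^k=8C_k-(1^2-1)(2^k-1)=8C_k$ and ${_1}C_1^k=2C_k+1^2\cdot 2^{k-1}-\tfrac{2}{3}(0)(1)(2)=2C_k+2^{k-1}$, because the factors $(m^2-1)$ and $(m-1)m(m+1)$ vanish at $m=1$. Substituting into (\ref{2.13}) yields
\begin{equation*}
\widetilde a_0^{(k)}=(4C_k+2^{k-1})a_0+8C_k\,a_1,\qquad
\widetilde a_1^{(k)}=C_k\,a_0+(2C_k+2^{k-1})a_1,
\end{equation*}
and dividing by $8^{k-1}$ reproduces exactly the stated expressions for $a_0^{(k)}$ and $a_1^{(k)}$; the case $k=1$ is immediate since $C_1=0$ and the formulas return $a_0,a_1$.

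The part needing a little more care is showing that all off-tridiagonal entries vanish, i.e. that the third line of (\ref{2.13}) gives $\widetilde a_j^{(k)}=0$ for $j\ge2$. For $j\ge3$ and $k\ge2$ the summation index runs over $[(j-2)2^{k-1},(j+2)2^{k-1}-1]\subseteq\{m:\ m\ge 2^{k-1}\ge 2\}$, so every $a_m^{(1)}$ that occurs is zero. The only subtle case is $j=2$, where $m=0$ and $m=1$ do enter, through the $\varphi_1$-branch; but for $j=2$ one has $\varphi_1=\tfrac{1}{6}(m-1)m(m+1)$, which vanishes at both $m=0$ and $m=1$, while all remaining $m$ in the range carry $a_m^{(1)}=0$. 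Hence $\widetilde a_2^{(k)}=0$ as well, so $A^{(k)}=\mathrm{tridiag}(a_1^{(k)},a_0^{(k)},a_1^{(k)})$. I would also add a short remark that, although Lemma~\ref{lemma2.4} is phrased for (semi-)infinite Toeplitz matrices, no boundary terms arise on the finite grids $\mathcal{B}_k$: the stencils in (\ref{2.3}) and (\ref{2.5}) are the exact finite truncations of the infinite ones and, since $M_k=2M_{k-1}+1$, their supports stay inside the grids, so the finite Galerkin product equals the central section of the infinite one.

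I expect the main obstacle to be organizational rather than conceptual: pinning down the scalar normalization $8^{k-1}$ so that the corollary's Galerkin recursion is correctly matched to the scaled recursion of Lemma~\ref{lemma2.4}, and checking that ${_0}C_m^k$, ${_1}C_m^k$ and $\varphi_1$ really do vanish at the low indices $m\in\{0,1\}$ that appear. An alternative, if one prefers not to invoke Lemma~\ref{lemma2.4}, is a direct induction on $k$: compute $I_k^{k-1}\,\mathrm{tridiag}(a_1^{(k-1)},a_0^{(k-1)},a_1^{(k-1)})\,I_{k-1}^{k}$ stencil by stencil, observe that the product is again tridiagonal Toeplitz, and verify that the proposed closed forms for $a_0^{(k)},a_1^{(k)}$ satisfy the resulting one-step recursion together with the identity for $C_k$; this is more elementary but involves heavier bookkeeping, so I would present the specialization of Lemma~\ref{lemma2.4} as the primary argument.
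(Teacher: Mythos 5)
Your proposal is correct and follows essentially the same route as the paper: the paper's proof of Corollary \ref{corollary2.5} consists precisely of specializing Lemma \ref{lemma2.4} to the tridiagonal datum, and your $8^{k-1}$ normalization (from $L_h^H L_H^h$ versus $I_k^{k-1}I_{k-1}^k$), the evaluation ${_0}C_1^k=8C_k$, ${_1}C_1^k=2C_k+2^{k-1}$, and the check that $\varphi_1$ kills the $m=0,1$ terms for $j=2$ simply make explicit the details the paper leaves implicit.
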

\begin{proof}
From Lemma \ref{lemma2.4}, the desired results can be obtained.
\end{proof}
\begin{lemma}\label{lemma2.6}
Let $A^{(1)}:=A_h$ be defined by
(\ref{2.1}) and $A^{(k)}=I_k^{k-1}A^{(k-1)}I_{k-1}^{k}$. Then
\begin{equation*}
\frac{\omega}{\lambda_{\max}(A_k) }(\nu^k,\nu^k) \leq (S_k\nu^k,\nu^k)\leq (A_k^{-1}\nu^k,\nu^k)
\quad \forall \nu^k\in \mathcal{B}_k,
\end{equation*}
where $A_k=A^{(K-k+1)}$, $S_k=\omega D_k^{-1}$, $\omega \in (0,1/2]$ and $D_k$ is the diagonal of $A_k$.
\end{lemma}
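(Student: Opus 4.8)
The plan is to reduce both inequalities to scalar facts about $A_k$, exploiting that by Corollary~\ref{corollary2.5} every coarse matrix $A_k=A^{(K-k+1)}$ is again a symmetric tridiagonal Toeplitz matrix $\mathrm{tridiag}(a_1^{(k)},a_0^{(k)},a_1^{(k)})$ with explicitly computable entries. In particular its diagonal is $D_k=a_0^{(k)}I$, so $S_k=(\omega/a_0^{(k)})I$ is a scalar matrix and $(S_k\nu^k,\nu^k)=(\omega/a_0^{(k)})(\nu^k,\nu^k)$. The left inequality is then equivalent to $a_0^{(k)}\le\lambda_{\max}(A_k)$, and --- once $A_k$ is known to be symmetric positive definite --- the right inequality $(S_k\nu^k,\nu^k)\le(A_k^{-1}\nu^k,\nu^k)$ is equivalent to $\omega\,\lambda_{\max}(A_k)\le a_0^{(k)}$ (using $\lambda_{\min}(A_k^{-1})=1/\lambda_{\max}(A_k)$).

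I would first record the structural facts: $a_0^{(k)}>0$ and the inherited diagonal dominance $a_0^{(k)}\ge 2|a_1^{(k)}|$, both read off from the closed form of Corollary~\ref{corollary2.5}. Since $A_k$ is tridiagonal Toeplitz, its eigenvalues are $a_0^{(k)}+2a_1^{(k)}\cos\theta$ with $\theta\in(0,\pi)$, so $a_0^{(k)}\ge 2|a_1^{(k)}|$ together with $|\cos\theta|<1$ gives $\lambda_{\min}(A_k)>0$ (possibly after the trivial case $a_1^{(k)}=0$) and $\lambda_{\max}(A_k)\le a_0^{(k)}+2|a_1^{(k)}|\le 2a_0^{(k)}$; in particular $A_k$ is SPD. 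The left inequality is then immediate from the Rayleigh quotient, $\lambda_{\max}(A_k)\ge (A_ke_1,e_1)/(e_1,e_1)=a_0^{(k)}$ with $e_1$ a coordinate vector, and the right inequality follows from $\omega\,\lambda_{\max}(A_k)\le 2\omega\,a_0^{(k)}\le a_0^{(k)}$ since $\omega\in(0,1/2]$.

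What remains --- and what I expect to be the only real work --- is the inherited diagonal dominance $a_0^{(k)}\ge 2|a_1^{(k)}|$, a bookkeeping exercise on the formulas of Corollary~\ref{corollary2.5}. I would change variables to $u:=a_0-2a_1$ and $v:=a_0+2a_1$, which are both $\ge 0$ under the hypothesis $a_0\ge 2|a_1|$ (morally: test the quadratic form of $A^{(1)}$ on the alternating and the constant vectors, which is exactly what the interpolation $I_{k-1}^k$ respects). Substituting $a_0=(u+v)/2$, $a_1=(v-u)/4$ into the expressions for $a_0^{(k)},a_1^{(k)}$ and simplifying, one obtains
\begin{equation*}
a_0^{(k)}-2a_1^{(k)}=\frac{2^{k-1}u+2C_k v}{8^{k-1}},\qquad
a_0^{(k)}+2a_1^{(k)}=\frac{(2^{k-1}+6C_k)v}{8^{k-1}},
\end{equation*}
with $C_k=2^{k-2}\,\tfrac{2^{2k-2}-1}{3}\ge 0$; both right-hand sides are nonnegative, which is exactly $a_0^{(k)}\ge 2|a_1^{(k)}|$, and since $u+v=2a_0>0$ their sum gives $a_0^{(k)}>0$. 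The main obstacle is thus purely computational: keeping the level-index shift $k\leftrightarrow K-k+1$ straight and simplifying the rather intricate coefficients of Corollary~\ref{corollary2.5}; no conceptual ingredient beyond ``constant/alternating test vectors survive the Galerkin coarsening'' is needed, after which assembling $a_0^{(k)}\le\lambda_{\max}(A_k)\le 2a_0^{(k)}$ with $\omega\le 1/2$ yields the two asserted inequalities.
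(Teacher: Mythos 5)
Your proposal is correct and takes essentially the same route as the paper: your $(u,v)$ reparametrization of the coarse-level entries is precisely the paper's decomposition $A^{(k)}=\mu_1\,\mathrm{tridiag}(-1,2,-1)+\mu_2\,\mathrm{tridiag}(1,2,1)$ (with $4\mu_1=a_0^{(k)}-2a_1^{(k)}\ge 0$, $4\mu_2=a_0^{(k)}+2a_1^{(k)}\ge 0$), and the key spectral bounds $a_0^{(k)}\le\lambda_{\max}(A_k)< 2a_0^{(k)}$ are obtained there by the same Rayleigh-quotient test vector $e_1$ together with the Gerschgorin circle theorem in place of your explicit tridiagonal-Toeplitz eigenvalue formula. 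The only differences are cosmetic, so the argument matches the paper's proof in substance.
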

\begin{proof}
According to  Corollary \ref{corollary2.5}, we have
\begin{equation}\label{2.14}
\begin{split}
  A^{(k)}
&=\mu_1\cdot{\rm tridiag}(-1, 2, -1)+\mu_2\cdot{\rm tridiag}(1, 2, 1)=: A_1^{(k)}+ A_2^{(k)}
\end{split}
\end{equation}
with
$$\mu_1=\frac{2C_k\left(a_0+2a_1\right)+2^{k-1}\left(a_0-2a_1\right)}{4\cdot 8^{k-1}}>0,$$
and
$$\mu_2=\frac{\left(6C_k+2^{k-1}\right)\left(a_0+2a_1\right)}{4\cdot 8^{k-1}}\geq 0.$$
Taking  $A^{(k)}=\{a_{i,j}^{(k)}\}_{i,j=1}^{\infty}$, $a_{i,j}^{(k)}=a_{|i-j|}^{(k)} \,~~~\forall k \geq 1$
and using (\ref{2.14}), we obtain
$$ r_i^{(k)}:= \sum\limits_{j\neq i} |a_{i,j}^{(k)}| <  a_{i,i}^{(k)}.$$
From the Gerschgorin circle theorem \cite[p.\,388]{Horn:13}, the eigenvalues of  $A^{(k)}$ are in the disks centered at $a_{i,i}^{(k)}$ with radius
$r_i^{(k)}$, i.e.,  the eigenvalues $\lambda$ of the matrix  $A^{(k)}$ satisfy
$$  |\lambda -a_{i,i}^{(k)} | \leq r_i^{(k)}, $$
which yields
$\lambda_{\max}(A^{(k)}) \leq a_{i,i}^{(k)}+r_i^{(k)}< 2a_{i,i}^{(k)}=2a_{1,1}^{(k)}. $

On the other hand, using the  Rayleigh theorem  \cite[p.\,235]{Horn:13}, i.e.,
$$\lambda_{\max}(A^{(k)})=\max_{x\neq 0}\frac{x^TA^{(k)}x}{x^Tx}\quad\forall x \in  \mathbb{{R}}^n,$$
if we take $x=[1,0,\ldots,0]^T$, it means that
$$\lambda_{\max}(A^{(k)})\geq \frac{x^TA^{(k)}x}{x^Tx}=a_{1,1}^{(k)}.$$
Hence, we obtain
$$\lambda_{\max}\left(\left(D^{(k)}\right)^{-1}A^{(k)}\right)=\frac{\lambda_{\max}(A^{(k)})}{a_{1,1}^{(k)}} \in [1,2),$$
where  $D^{(k)}$ is the diagonal of $A^{(k)}$. It yields
$$1\leq \lambda_{\max}(D_k^{-1}A_k)<2 \quad \forall \nu^k\in \mathcal{B}_k.$$
The proof is completed.
\end{proof}

\begin{lemma}\label{lemma2.7}
Let $L_a={\rm tridiag}(b, a, b)$ and $L_c={\rm tridiag}(d, c, d)$ be symmetric positive definite.
Then  $L_aL_c$ is symmetric positive definite.
\end{lemma}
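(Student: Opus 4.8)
The plan is to reduce everything to the observation that $L_a$ and $L_c$ are both first--degree polynomials in one and the same symmetric matrix. Denote by $n$ the common order of $L_a$ and $L_c$ (implicit, since otherwise the product $L_aL_c$ is not defined), and write $T_n:={\rm tridiag}(1,0,1)$ for the $n\times n$ basic tridiagonal Toeplitz matrix; then $L_a=aI+bT_n$ and $L_c=cI+dT_n$. In particular $L_aL_c=acI+(ad+bc)T_n+bd\,T_n^{2}=L_cL_a$, so the two matrices commute, and since $(L_aL_c)^{T}=L_c^{T}L_a^{T}=L_cL_a=L_aL_c$ the product is symmetric. This commutativity is really the only delicate point: the product of two symmetric matrices is not symmetric in general, and here it is so precisely because both factors are polynomials in $T_n$.

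Next I would diagonalize $T_n$ orthogonally, $T_n=Q\Lambda Q^{T}$ with $Q$ orthogonal and $\Lambda={\rm diag}(\lambda_1,\dots,\lambda_n)$, $\lambda_k=2\cos\!\big(k\pi/(n+1)\big)$. Then $L_a=Q\,(aI+b\Lambda)\,Q^{T}$ and $L_c=Q\,(cI+d\Lambda)\,Q^{T}$, so the eigenvalues of $L_a$ are $a+b\lambda_k$ and those of $L_c$ are $c+d\lambda_k$; the assumption that $L_a$ and $L_c$ are symmetric positive definite is therefore equivalent to $a+b\lambda_k>0$ and $c+d\lambda_k>0$ for every $k$. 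Consequently
\[
L_aL_c=Q\,(aI+b\Lambda)(cI+d\Lambda)\,Q^{T}=Q\,{\rm diag}\big\{(a+b\lambda_k)(c+d\lambda_k)\big\}_{k=1}^{n}\,Q^{T},
\]
which exhibits $L_aL_c$ as an orthogonally diagonalizable matrix whose eigenvalues $(a+b\lambda_k)(c+d\lambda_k)$ are products of positive numbers, hence positive. Thus $L_aL_c$ is symmetric positive definite, as claimed.

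If one prefers to avoid the explicit spectral decomposition, an alternative for the positivity step is to note that $L_a^{1/2}$ exists and is symmetric positive definite (since $L_a$ is), and that $L_aL_c$ is similar to $L_a^{-1/2}(L_aL_c)L_a^{1/2}=L_a^{1/2}L_cL_a^{1/2}$, the latter being congruent to the SPD matrix $L_c$ via the invertible symmetric matrix $L_a^{1/2}$ and hence SPD by Sylvester's law of inertia; combined with the symmetry of $L_aL_c$ established above, this again yields that $L_aL_c$ is SPD. I do not expect any genuine obstacle: the whole content of the lemma is the simultaneous diagonalizability forced by the shared symmetric Toeplitz tridiagonal structure, which both guarantees symmetry of the product and lets us read off the sign of its spectrum.
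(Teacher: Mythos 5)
Your proof is correct. The paper disposes of this lemma in two lines by citation: it asserts that $L_aL_c$ is symmetric, invokes Horn--Johnson (p.~233) to get $L_aL_c=L_cL_a$, and then invokes Horn--Johnson (p.~490) for the conclusion that such a product of symmetric positive definite matrices is again symmetric positive definite. Your argument establishes the same two facts --- symmetry of the product and positivity of its spectrum --- but derives them explicitly from the structure that makes those cited theorems applicable in the first place: both factors are polynomials $aI+bT_n$ and $cI+dT_n$ in the single matrix $T_n={\rm tridiag}(1,0,1)$, hence they commute and the product is symmetric, and the known orthogonal diagonalization of $T_n$ with eigenvalues $\lambda_k=2\cos\big(k\pi/(n+1)\big)$ lets you read off the eigenvalues $(a+b\lambda_k)(c+d\lambda_k)>0$ of $L_aL_c$ directly. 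This is more self-contained and in fact logically cleaner than the paper's wording, which states the symmetry of $L_aL_c$ first and deduces commutativity from it, whereas commutativity (coming from the shared tridiagonal Toeplitz structure) is what justifies the symmetry; your version supplies exactly that step. Your alternative ending via $L_a^{1/2}$, the similarity of $L_aL_c$ to $L_a^{1/2}L_cL_a^{1/2}$ and Sylvester's law of inertia is also sound, and is essentially the standard argument behind the general result the paper cites; either route, combined with the symmetry you established, yields the claim.
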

\begin{proof}
Since   $L_aL_c$ is a symmetric matrix,  it yields $L_aL_c=L_cL_a$ by \cite[p.\,233]{Horn:13}.
Moreover, using \cite[p.\,490]{Horn:13} leads to that   $L_aL_c$ is symmetric  positive definite.
\end{proof}

\begin{lemma}\label{lemma2.8}
Let $A^{(1)}:=A_h$ be defined by
(\ref{2.1}) and $A^{(k)}=I_k^{k-1}A^{(k-1)}I_{k-1}^{k}$. Then
\begin{equation*}
   \min_{\nu^{k-1} \in \mathcal{B}_{k-1} }||\nu^k-I_{k-1}^k\nu^{k-1}||_{A_k}^2\leq m_0 ||A_k\nu^k||_{D_k^{-1}}^2 \quad  \forall \nu^k \in \mathcal{B}_k
\end{equation*}
with $A_k=A^{(K-k+1)}$ and $m_0=(1+\widetilde{m}_0)^2$,
where
$$\widetilde{m}_0=\max\left\{\frac{\left(6C_k+2^{k-1}\right)\left(a_0+2a_1\right)}
{\left(2C_k+2^{k-1}\right)\left(a_0+2a_1\right)-2^{k+1}a_1} \quad\forall k \geq 1\right\}$$
and $C_k=2^{k-2}\cdot\frac{2^{2k-2}-1}{3}$.
In particular,
\begin{equation*}
 m_0=\left\{ \begin{split}
&1~~{\rm if }~~a_0+2a_1=0;\\
&16~~{\rm if}~~a_1\leq 0;\\
&\max\{25,{4a_0^2}/{\left(a_0-2a_1\right)^2}\}~~~{\rm if}~~a_1> 0, ~a_0 \neq 2a_1.
 \end{split}
 \right.
\end{equation*}
\end{lemma}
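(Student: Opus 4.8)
The plan is to verify the stronger pointwise inequality $\|\nu^k-I_{k-1}^k\nu^{k-1}\|_{A_k}^2\le m_0\|A_k\nu^k\|_{D_k^{-1}}^2$ for one concrete admissible $\nu^{k-1}\in\mathcal{B}_{k-1}$; since the left‑hand side of (\ref{2.12}) is a minimum over $\mathcal{B}_{k-1}$, it is then automatically majorized. Throughout write $A_k=A^{(K-k+1)}={\rm tridiag}(a_1^{(k)},a_0^{(k)},a_1^{(k)})$ as in Corollary \ref{corollary2.5}, $r:=A_k\nu^k$, and note that $D_k=a_0^{(k)}I$, so $\|A_k\nu^k\|_{D_k^{-1}}^2=\|r\|_2^2/a_0^{(k)}$. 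From (\ref{2.14}) one has $a_0^{(k)}+2a_1^{(k)}=4\mu_2\ge 0$ and $a_0^{(k)}-2a_1^{(k)}=4\mu_1>0$, and multiplying the expressions of Corollary \ref{corollary2.5} by $8^{k-1}$ one checks at once that $\widetilde{m}_0=\max_{k}\frac{a_0^{(k)}+2a_1^{(k)}}{a_0^{(k)}-2a_1^{(k)}}=\max_{k}\frac{\mu_2}{\mu_1}$.

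I would take $\nu^{k-1}$ to be the plain injection of $\nu^k$, i.e.\ $\nu^{k-1}_i=\nu^k_{2i}$ for $i=1:M_{k-1}$. By the structure (\ref{2.5}) of $I_{k-1}^k$ the defect $w:=\nu^k-I_{k-1}^k\nu^{k-1}$ then vanishes at every even index and $w_{2i+1}=\nu^k_{2i+1}-\tfrac12(\nu^k_{2i}+\nu^k_{2i+2})$ (with the homogeneous convention at the endpoints). Since $w$ is supported on odd indices, every product $w_jw_{j+1}$ vanishes, so $\|w\|_{A_k}^2=w^TA_kw=a_0^{(k)}\|w\|_2^2$. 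To estimate $a_0^{(k)}\|w\|_2$ I would read off the odd rows of $r=A_k\nu^k$, namely $r_{2i+1}=a_0^{(k)}\nu^k_{2i+1}+a_1^{(k)}(\nu^k_{2i}+\nu^k_{2i+2})$, and eliminate $\nu^k_{2i+1}$ using the formula for $w_{2i+1}$; this gives $a_0^{(k)}w_{2i+1}=r_{2i+1}-\tfrac12(a_0^{(k)}+2a_1^{(k)})(\nu^k_{2i}+\nu^k_{2i+2})$. Using $a_0^{(k)}+2a_1^{(k)}\ge 0$, the bound $\|(r_{2i+1})_i\|_2\le\|r\|_2$ for the restriction of $r$ to its odd entries, and $\|(\nu^k_{2i}+\nu^k_{2i+2})_i\|_2\le 2\|\nu^k\|_2$, we get $a_0^{(k)}\|w\|_2\le\|r\|_2+(a_0^{(k)}+2a_1^{(k)})\|\nu^k\|_2$.

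It remains to absorb the last term into $\|r\|_2$. If $a_0^{(k)}+2a_1^{(k)}=0$ (which, by Corollary \ref{corollary2.5}, happens at every level exactly when $a_0+2a_1=0$) it simply disappears, so $\widetilde{m}_0=0$ and $m_0=1$. Otherwise I would use $\|\nu^k\|_2\le\|r\|_2/\lambda_{\min}(A_k)$ together with the Gerschgorin bound $\lambda_{\min}(A_k)\ge a_0^{(k)}-2|a_1^{(k)}|$ already invoked in Lemma \ref{lemma2.6}: when $a_1^{(k)}\ge 0$ this is $a_0^{(k)}-2a_1^{(k)}$, so $(a_0^{(k)}+2a_1^{(k)})\|\nu^k\|_2\le\frac{a_0^{(k)}+2a_1^{(k)}}{a_0^{(k)}-2a_1^{(k)}}\|r\|_2\le\widetilde{m}_0\|r\|_2$; when $a_1^{(k)}<0$ it is $a_0^{(k)}+2a_1^{(k)}$, so $(a_0^{(k)}+2a_1^{(k)})\|\nu^k\|_2\le\|r\|_2\le\widetilde{m}_0\|r\|_2$, the last step being legitimate because $\mu_2/\mu_1\to 3$ as $k\to\infty$ whenever $\mu_2>0$, hence $\widetilde{m}_0\ge 3\ge 1$. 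Either way $a_0^{(k)}\|w\|_2\le(1+\widetilde{m}_0)\|r\|_2$, so $\|w\|_{A_k}^2=(a_0^{(k)}\|w\|_2)^2/a_0^{(k)}\le(1+\widetilde{m}_0)^2\|A_k\nu^k\|_{D_k^{-1}}^2$, i.e.\ $m_0=(1+\widetilde{m}_0)^2$ is admissible. For the explicit values I would then bound $\widetilde{m}_0=\max_k\mu_2/\mu_1$ using that $\mu_2/\mu_1=\frac{(6C_k+2^{k-1})(a_0+2a_1)}{2C_k(a_0+2a_1)+2^{k-1}(a_0-2a_1)}$ is monotone in $C_k/2^{k-1}$ (the sign of its derivative being that of $a_0-4a_1$), with value $\frac{a_0+2a_1}{a_0-2a_1}$ at $k=1$ and limit $3$; thus $\widetilde{m}_0\le 3$ as soon as $a_1\le a_0/4$ (in particular if $a_1\le 0$, whence $m_0\le 16$), while in general $\widetilde{m}_0\le\max\{3,\frac{a_0+2a_1}{a_0-2a_1}\}$, whence $m_0\le\max\{16,4a_0^2/(a_0-2a_1)^2\}\le\max\{25,4a_0^2/(a_0-2a_1)^2\}$ when $a_1>0$ and $a_0\ne 2a_1$.

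The one genuine obstacle is the term $\tfrac12(a_0^{(k)}+2a_1^{(k)})(\nu^k_{2i}+\nu^k_{2i+2})$ in $a_0^{(k)}w_{2i+1}$: it is not a second difference of $\nu^k$, so it cannot be expressed through $A_k\nu^k$ by a purely local manipulation and therefore forces a lower bound on $\lambda_{\min}(A_k)$. What makes this harmless uniformly in the level is the structural coincidence that the coefficient $a_0^{(k)}+2a_1^{(k)}=4\mu_2$ of this term equals $\widetilde{m}_0$ times the worst‑case value $a_0^{(k)}-2a_1^{(k)}=4\mu_1$ of $\lambda_{\min}(A_k)$; verifying that $\widetilde{m}_0$ is finite and independent of the number of levels — equivalently, that $\mu_2/\mu_1$ stays bounded with limit $3$ — is exactly where the explicit coarsening formulas of Lemma \ref{lemma2.4} and Corollary \ref{corollary2.5} are really needed.
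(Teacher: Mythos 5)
Your proof is correct, and although it starts from the same competitor as the paper --- the injection $\nu^{k-1}_i=\nu^k_{2i}$, the observation that the defect $w=\nu^k-I_{k-1}^k\nu^{k-1}$ lives only on the odd indices so that $\|w\|_{A_k}^2=a_0^{(k)}\|w\|^2$, and the same final constant $(1+\widetilde m_0)^2$ with $\widetilde m_0=\max_k \mu_2/\mu_1$ --- the central comparison is carried out by a genuinely different mechanism. The paper never invokes $\lambda_{\min}(A_k)$: it uses the fact that $w$ at odd points is half a second difference to get the bound (\ref{2.17}) in terms of $\|L_{M_k}\nu^k\|$, and then the splitting $A^{(k)}=\mu_1L+\mu_2\widetilde L$ of (\ref{2.14}) together with Lemma \ref{lemma2.7} (the commuting SPD factors give a nonnegative cross term) to obtain $\|A^{(k)}\nu\|\ge\mu_1\|L\nu\|$, i.e.\ both sides are compared with the discrete Laplacian seminorm. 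You instead solve the odd rows of $r=A_k\nu^k$ for $a_0^{(k)}w_{2i+1}$ and absorb the leftover $(a_0^{(k)}+2a_1^{(k)})\|\nu^k\|$ via $\|\nu^k\|\le\|r\|/\lambda_{\min}(A_k)$ and the Gerschgorin bound $\lambda_{\min}(A_k)\ge a_0^{(k)}-2|a_1^{(k)}|$; the structural facts you rely on ($a_0^{(k)}+2a_1^{(k)}=4\mu_2\ge0$, $a_0^{(k)}-2a_1^{(k)}=4\mu_1>0$, and $\widetilde m_0\ge3$ whenever $a_0+2a_1>0$, needed to cover the subcase $a_1^{(k)}<0$) are all true and adequately justified, so the argument closes with exactly the same $m_0=(1+\widetilde m_0)^2$, and your monotonicity analysis of $\mu_2/\mu_1$ as a function of $C_k/2^{k-1}$ even yields a slightly sharper particular constant ($16$ instead of $25$ when $0<a_1\le a_0/4$), which still implies the values stated in the lemma. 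What the paper's route buys is independence from any spectral lower bound on $A_k$ --- only the odd-row structure of $L$ and the sign of the cross term are used --- and this Laplacian-comparison mechanism is the one reused almost verbatim in the 2D proof of Lemma \ref{lemma3.7}; what your route buys is elementarity (no Lemma \ref{lemma2.7}, no $\|L\nu\|$ intermediary, just the triangle inequality and Gerschgorin), at the price of leaning on the strict diagonal dominance $a_0^{(k)}>2|a_1^{(k)}|$ of every coarse matrix, which holds here but is precisely what degenerates in the excluded case $a_0=2a_1$ and would be the delicate point in the extensions to more general Toeplitz matrices mentioned in the concluding section.
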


\begin{proof}
Let an odd number $M_k$ be defined  by (\ref{2.2}).
For any
$$\nu^k=(\nu^k_1,\nu^k_2,\ldots,\nu^k_{M_k})^{\rm T} \in \mathcal{B}_k~~~~{\rm and }~~~~\nu^k_0=\nu^k_{M_k+1}=0,$$
taking  $\nu^{k-1}=(\nu^k_2,\nu^k_4,\ldots,\nu^k_{M_k-1})^{\rm T} \in \mathcal{B}_{k-1}$ yields
\begin{equation*}
\begin{split}
\nu^{k-1}=T\nu^{k},
\end{split}
\end{equation*}
where the cutting matrix is defined by
\begin{equation}\label{2.15}
T=\left [ \begin{matrix}
0       &   1        & 0            &  0        &\cdots        &0       &0       &0       &0         \\
0       &   0        & 0            &  1        &\cdots        &0       &0       &0       &0          \\
\vdots  &\vdots      &\vdots        &\vdots     &\cdots        &\vdots  &\vdots  &\vdots  &\vdots    \\
0       &   0        & 0            &  0        &\cdots        &1       &0       &0       &0         \\
0       &   0        & 0            &  0        &\cdots        &0       &0       &1       &0
 \end{matrix}
 \right ]_{M_{k-1}\times M_{k}}.
\end{equation}
Therefore, we have
\begin{equation}\label{2.16}
\begin{split}
&\nu^k-I_{k-1}^k\nu^{k-1}=\left(I-I_{k-1}^kT\right)\nu^{k}\\
&=\left(\nu^k_1-\frac{\nu^k_0+\nu^k_2}{2},0,\nu^k_3-\frac{\nu^k_2+\nu^k_4}{2},0,
\ldots,\nu^k_{M_k}-\frac{\nu^k_{M_k-1}+\nu^k_{M_k+1}}{2}\right)^{\rm T}.
\end{split}
\end{equation}

Let $L_{M_k}={\rm tridiag}(-1, 2, -1)$ be the $M_k\times M_k$ one dimensional discrete Laplacian.
According to (\ref{2.9}) and (\ref{2.16}), there exists
\begin{equation}\label{2.17}
\begin{split}
||\nu^k-I_{k-1}^k\nu^{k-1}||_{L_{M_k}}^2
&=2||\nu^k-I_{k-1}^k\nu^{k-1}||^2\\
&=2||\left(I-I_{k-1}^kT\right)\nu^{k}||^2 \leq \frac{1}{2}||L_{M_k}\nu^k||^2,
\end{split}
\end{equation}
since
$$ 2 \sum_{i=1}^{(M_k+1)/2}\left( \nu^k_{2i-1}-\frac{\nu^k_{2i-2} +\nu^k_{2i}}{2}\right)^2
\leq   2 \sum_{i=1}^{M_k}\left( \nu^k_{i}-\frac{\nu^k_{i-1} +\nu^k_{i+1}}{2}\right)^2=\frac{1}{2}||L_{M_k}\nu^k||^2.
$$
From (\ref{2.16}), (\ref{2.17}) and (\ref{2.14}), we get
\begin{equation}\label{2.18}
\begin{split}
&||\nu^{K-k+1}-I_{K-k}^{K-k+1}\nu^{K-k}||_{A^{(k)}}^2\\
&=\left(2\mu_1+2\mu_2\right)||\nu^{K-k+1}-I_{K-k}^{K-k+1}\nu^{K-k}||^2\\
& \leq \frac{\mu_1+\mu_2}{2}||L_{M_{K-k+1}}\nu^{K-k+1}||^2.
\end{split}
\end{equation}
According to  Lemma \ref{lemma2.7} and (\ref{2.14}), it implies that $A_1^{(k)}A_2^{(k)}$ is symmetric positive definite. Therefore,
\begin{equation*}
\begin{split}
  ||A^{(k)}\nu^{K-k+1}||^2
  &\geq ||A_1^{(k)}\nu^{K-k+1}||^2=\mu_1^2||L_{M_{K-k+1}}\nu^{K-k+1}||^2,
\end{split}
\end{equation*}
which yields
\begin{equation}\label{2.19}
\begin{split}
  ||A^{(k)}\nu^{K-k+1}||_{(D^k)^{-1}}^2
  \geq \frac{\mu_1^2}{2\mu_1+2\mu_2}||L_{M_{K-k+1}}\nu^{K-k+1}||^2,
\end{split}
\end{equation}
where $D^k$ is the diagonal of $A^{(k)}$.
Using (\ref{2.18}) and (\ref{2.19}), there exists
\begin{equation*}
\begin{split}
||\nu^{K-k+1}-I_{K-k}^{K-k+1}\nu^{K-k}||_{A^{(k)}}^2
& \leq\frac{\mu_1+\mu_2}{2}||L_{M_{K-k+1}}\nu^{K-k+1}||^2\\
&\leq\left(1+\frac{\mu_2}{\mu_1}\right)^2 ||A^{(k)}\nu^{K-k+1}||_{(D^k)^{-1}}^2\\
\end{split}
\end{equation*}
with
\begin{equation}\label{2.20}
  \frac{\mu_2}{\mu_1}=
\frac{\left(6C_k+2^{k-1}\right)\left(a_0+2a_1\right)}
{\left(2C_k+2^{k-1}\right)\left(a_0+2a_1\right)-2^{k+1}a_1}\geq 0\quad{\rm for ~} k \geq 1;
\end{equation}
when $k=1$, it can be simplified as
\begin{equation}\label{2.21}
\frac{\mu_2}{\mu_1}=\frac{a_0+2a_1}{a_0-2a_1}.
\end{equation}
In particular, there exists
\begin{equation}\label{2.22}
  \frac{\mu_2}{\mu_1}\left\{ \begin{split}
&=0~~{\rm if }~~a_0+2a_1=0  \quad\forall k \geq 1;\\
&<3~~{\rm if }~~a_1\leq 0  \quad\forall k \geq 1;\\
&\leq 4~~{\rm if }~~a_1> 0  \quad\forall k \geq 2,
 \end{split}
 \right.
\end{equation}
since
\begin{equation*}
\begin{split}
  \frac{\mu_2}{\mu_1}
&=3+\frac{-2^{k}a_0+4\cdot 2^k a_1}{\left(2C_k-2^{k-1}\right)\left(a_0+2a_1\right)+2^{k}a_0}\\
&\leq 3+\frac{2^{k}a_0}{\left(2C_k-2^{k-1}\right)\left(a_0+2a_1\right)+2^{k}a_0}\leq 4~~{\rm with}~~a_1> 0 \quad \forall k \geq 2.\\
\end{split}
\end{equation*}
Hence
\begin{equation*}
   \min_{\nu^{k-1} \in \mathcal{B}_{k-1} }||\nu^k-I_{k-1}^k\nu^{k-1}||_{A_k}^2\leq m_0 ||A_k\nu^k||_{D_k^{-1}}^2 \quad  \forall \nu^k \in \mathcal{B}_k,
\end{equation*}
where
$m_0=(1+\widetilde{m}_0)^2$
with
\begin{equation}\label{2.23}
\widetilde{m}_0=\max\left\{\frac{\left(6C_k+2^{k-1}\right)\left(a_0+2a_1\right)}
{\left(2C_k+2^{k-1}\right)\left(a_0+2a_1\right)-2^{k+1}a_1} \quad\forall k \geq 1\right\}
\end{equation}
and $C_k=2^{k-2}\cdot\frac{2^{2k-2}-1}{3}$. In particular,  from (\ref{2.20})-(\ref{2.23}), there exists
$$
 m_0   =(1+\widetilde{m}_0)^2 =\left\{ \begin{split}
&1~~~{\rm if}~~a_0+2a_1=0;\\
&16~~~{\rm if}~~a_1\leq 0;\\
&\max\{25,{4a_0^2}/{\left(a_0-2a_1\right)^2}\}~~~{\rm if}~~a_1> 0, ~a_0 \neq 2a_1,
 \end{split}
 \right.
$$
  where we use $$\left(1+\frac{a_0+2a_1}{a_0-2a_1}\right)^2={4a_0^2}/{\left(a_0-2a_1\right)^2}.$$
The proof is completed.
\end{proof}
\begin{remark}
When $a_0=2a_1$, according to the theory of Toeplitz matrices generated by a function \cite{szego},  the generation function of
the considered tridiagonal Toeplitz matrices is $f(\theta)=2a_1(1+\cos\theta)$ and in that case the symbol has a zero at $\theta=\pi$: following the results in \cite{Fiorentino:91}[page 292, eq. (10), and Section 2.2.3], necessarily the symbol associated with the prolongation/restriction operator has to show a zero at $0$ and has to positive at $\pi$. This shows that the considered operators with stencil $[1~~ 2~~ 1]$ cannot be used in agreement with the considered condition, but the only possible tridiagonal choice is $[-1~~ 2~-1]$.
In fact,  we  know that the condition in Lemma \ref{lemma2.8}  is not only sufficient for optimality as shown here, but it is also necessary (see  \cite{Fiorentino:91,Fiorentino:96,Arico:04,Arico:07}).

The same type of connection is observed for the 2D case developed in Section \ref{sec3}.
\end{remark}

According to Lemmas \ref{lemma2.6},  \ref{lemma2.8} and Theorem \ref{theorem2.3}, we obtain the following result.
\begin{theorem}\label{theorem2.9}
For the algebraic system (\ref{2.1}),  we find
$$||I-B_kA_k||_{A_k} \leq \frac{m_0}{2l\omega+m_0}<1~~{\rm with}~~~1\leq k\leq K, ~~~~\omega \in (0,1/2],$$
where  the operator $B_k$ is defined by the  V-cycle method in  Multigrid Algorithm  \ref{MGM}  and  $l$ is the number of smoothing steps
and $m_0$ is given in Lemma \ref{lemma2.8}.
\end{theorem}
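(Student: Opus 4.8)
The plan is to deduce Theorem~\ref{theorem2.9} directly from the abstract convergence result Theorem~\ref{theorem2.3}, whose only hypotheses are the smoother bound (\ref{2.10}) and the approximation property (\ref{2.12}) at every level $k$. Hence the whole proof reduces to checking that the matrices $A_k = A^{(K-k+1)}$ produced from (\ref{2.1}) under the Galerkin coarsening $A^{(k)} = I_k^{k-1} A^{(k-1)} I_{k-1}^k$ satisfy these two conditions with a constant $m_0$ independent of $k$ (and of the finest size $M = 2^K-1$).

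First I would invoke Lemma~\ref{lemma2.6}, which is exactly the statement (\ref{2.10}) with $S_k = \omega D_k^{-1}$, $\omega \in (0,1/2]$, and $D_k$ the diagonal of $A_k$. The content behind it, which I would only cite here, is Corollary~\ref{corollary2.5}: the Galerkin coarse operators remain tridiagonal Toeplitz, $A^{(k)} = {\rm tridiag}(a_1^{(k)}, a_0^{(k)}, a_1^{(k)})$ with $a_0^{(k)} \ge 2|a_1^{(k)}|$, so that Gershgorin's theorem together with a Rayleigh-quotient lower bound gives $1 \le \lambda_{\max}(D_k^{-1} A_k) < 2$; with $\omega \le 1/2$ this forces $\omega D_k^{-1} \le A_k^{-1}$ in the Loewner order (the right inequality of (\ref{2.10})), while the left one is the trivial $\omega D_k^{-1} \ge \frac{\omega}{\lambda_{\max}(A_k)} I$.

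Next I would invoke Lemma~\ref{lemma2.8}, which establishes the approximation property (\ref{2.12}) with the explicit, $k$-uniform constant $m_0 = (1 + \widetilde{m}_0)^2$, $\widetilde{m}_0 = \max_k \mu_2/\mu_1$, and in closed form $m_0 = 1$ if $a_0 + 2a_1 = 0$, $m_0 = 16$ if $a_1 \le 0$, and $m_0 = \max\{25, 4a_0^2/(a_0 - 2a_1)^2\}$ if $a_1 > 0$ and $a_0 \ne 2a_1$. With (\ref{2.10}) and (\ref{2.12}) verified for all $k$, Theorem~\ref{theorem2.3} (itself obtained from Lemmas~\ref{lemma2.1} and~\ref{lemma2.2}) yields $\|I - B_k A_k\|_{A_k} \le m_0/(2l\omega + m_0)$, and since $m_0, l, \omega > 0$ the bound is strictly less than $1$; here $l = m_1 + m_2$ is the total number of pre- and post-smoothing sweeps in Algorithm~\ref{MGM}.

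Concerning the main obstacle: at the level of Theorem~\ref{theorem2.9} there is essentially none — it is a bookkeeping assembly of the earlier results. All the difficulty sits inside Lemma~\ref{lemma2.8} (and the supporting Lemma~\ref{lemma2.4}): namely (i) showing that the Galerkin recursion keeps the coarse matrix in the two-parameter cone spanned by ${\rm tridiag}(-1,2,-1)$ and ${\rm tridiag}(1,2,1)$, which is precisely where the stencil $[1~~2~~1]$ of the chosen prolongation enters, and (ii) bounding $\mu_2/\mu_1$ uniformly in $k$ so that $m_0$ can be chosen level-independently, the delicate case being $a_1 > 0$, where one must use the explicit $C_k = 2^{k-2}(2^{2k-2}-1)/3$ to see that $\mu_2/\mu_1 \le 4$ for $k \ge 2$. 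Granting those lemmas, as the excerpt permits, Theorem~\ref{theorem2.9} follows in a few lines.
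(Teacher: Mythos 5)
Your proposal is correct and matches the paper's own argument, which proves Theorem \ref{theorem2.9} precisely by combining Lemma \ref{lemma2.6} (verifying (\ref{2.10})), Lemma \ref{lemma2.8} (verifying (\ref{2.12}) with the level-independent $m_0$), and the abstract result of Theorem \ref{theorem2.3}. Your additional commentary on where the real difficulty lies (inside Lemmas \ref{lemma2.4} and \ref{lemma2.8}) is accurate but not needed for this step.
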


\section{Uniform convergence  of V-Cycle MGM  for 2D}\label{sec3}
In this section, we consider the symmetric positive definite Toeplitz block tridiagonal matrix. As an interesting example,
we study the algebraic system
\begin{equation}\label{3.1}
  {\bf A}_h{\bf v}^h={\bf f}_h,
\end{equation}
where $${\bf A}_h=c_1I \otimes I + c_2\left(I \otimes L +L \otimes I\right), ~~c_1\geq 0,~~c_2>0,$$
and $I$ is identity matrix, $L={\rm tridiag}(-1, 2, -1)$.
This example arises, for instance, from the discretization of the Poisson equations ($c_1=0$) in a square or the heat equations or the time fractional PDEs \cite{Deng:14,Fiorentino:96,Horton:95,Meurant:92}.

In 2D, the notations can be defined in a straightforward manner from the 1D case.
Let $\Omega \in (0,b)\times (0,b)$ and the mesh points $x_i=ih$, $y_j=jh$, $h=b/(M+1)$.
We still use the notation that $\mathcal{B}_k$ represents not only the grid with grid spacing $h_k=2^{(K-k)}h$, but also the space of vectors defined on that grid, where
\begin{equation}\label{3.2}
  \mathcal{B}_k=\Big\{(x_i^k,y_j^k)\Big|x_i^k=\frac{i}{2^k}b, y_j^k=\frac{j}{2^k}b, i,j=1: M_k \Big\}
\end{equation}
with $M_k=2^k-1, \,k=1 : K$.

For the two dimensional  system, the  restriction operator ${\bf I}_k^{k-1}$ and prolongation operator ${\bf I}_{k-1}^k$ \cite[p.\,436-439]{Saad:03} are, respectively, defined by
\begin{equation}\label{3.3}
 {\bf I}_{k-1}^k=P\otimes P:= I_{k-1}^k\otimes I_{k-1}^k,
\end{equation}
where $I_{k-1}^k$ is defined by (\ref{2.5}),  and
\begin{equation*}
\begin{split}
 {\bf I}_{k-1}^{k} =4\left( {\bf I}_k^{k-1}\right)^T.
\end{split}
\end{equation*}
The coarse problem is typically defined by the Galerkin approach
\begin{equation}\label{3.4}
 {\bf A}_{k-1}={\bf I}_k^{k-1}{\bf A}_k{\bf }{\bf I}_{k-1}^{k},~~~~~~{\bf f}^{k-1}={\bf I}_k^{k-1}{\bf f}^k.
\end{equation}

Let ${\bf K}_k$ be the iteration matrix of the smoothing operator. In this work, we
 take ${\bf K}_k$ to be  the weighted (damped) Jacobi iteration matrix
\begin{equation}\label{3.5}
  {\bf K}_k={\bf I}-{\bf S}_k{\bf A}_k,~~{\rm where}~~{\bf S}_{k}:={\bf S}_{k,\omega}=\omega {\bf D}_k^{-1}
\end{equation}
with a weighting  factor $\omega \in (0,1/4]$, and ${\bf D}_k$ is the diagonal of ${\bf A}_k$.

\subsection{Convergence estimates  of MGM for  2D}
We now give a complete proof on the uniform convergence  of the MGM for the algebraic system (\ref{3.1}),
i.e., we need to examine  the two assumptions (\ref{2.10}) and (\ref{2.12}). First, we give some lemmas.
\begin{lemma}\label{lemma3.1}\cite[p.\,5]{Chan:07}
Let  $A$ be a   symmetric matrices. Then
$$
\lambda_{\min}(A)=\min_{x\neq 0}\frac{x^TAx}{x^Tx},~~ \lambda_{\max}(A)=\max_{x\neq 0}\frac{x^TAx}{x^Tx}.
$$
\end{lemma}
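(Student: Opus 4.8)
The plan is to reduce everything to the diagonal case via the real spectral theorem. Since $A$ is a real symmetric matrix, there exists an orthogonal matrix $Q$ (so that $Q^{T}Q=I$) with $Q^{T}AQ=\Lambda=\mathrm{diag}(\lambda_{1},\dots,\lambda_{n})$, where I order the eigenvalues as $\lambda_{\min}=\lambda_{1}\le\lambda_{2}\le\dots\le\lambda_{n}=\lambda_{\max}$. For an arbitrary $x\neq 0$ I would substitute $y=Q^{T}x$: because $Q$ is orthogonal this is a bijection of $\mathbb{R}^{n}\setminus\{0\}$ onto itself, it preserves the Euclidean norm so that $x^{T}x=y^{T}y$, and moreover $x^{T}Ax=(Qy)^{T}A(Qy)=y^{T}(Q^{T}AQ)y=y^{T}\Lambda y=\sum_{i=1}^{n}\lambda_{i}y_{i}^{2}$.

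Consequently the Rayleigh quotient can be rewritten as
\[
\frac{x^{T}Ax}{x^{T}x}=\frac{\sum_{i=1}^{n}\lambda_{i}y_{i}^{2}}{\sum_{i=1}^{n}y_{i}^{2}},
\]
which is a convex combination of the eigenvalues $\lambda_{1},\dots,\lambda_{n}$, with weights $w_{i}=y_{i}^{2}\big/\sum_{j}y_{j}^{2}\ge 0$ and $\sum_{i}w_{i}=1$. Hence $\lambda_{\min}\le x^{T}Ax/x^{T}x\le\lambda_{\max}$ for every $x\neq 0$, which already gives the inequalities $\lambda_{\min}\le\min_{x\neq 0}x^{T}Ax/x^{T}x$ and $\max_{x\neq 0}x^{T}Ax/x^{T}x\le\lambda_{\max}$.

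For the reverse inequalities I would exhibit the extremizers explicitly rather than invoke compactness of the unit sphere: taking $y=e_{1}$, i.e.\ $x=Qe_{1}$, the (unit) eigenvector associated with $\lambda_{\min}$, yields $x^{T}Ax/x^{T}x=\lambda_{1}=\lambda_{\min}$; similarly $y=e_{n}$, i.e.\ $x=Qe_{n}$, gives the value $\lambda_{n}=\lambda_{\max}$. Combining the two directions proves both equalities. There is essentially no real obstacle here; the only points deserving a word of care are that the diagonalization used is the \emph{real} orthogonal one, valid precisely because $A$ is symmetric with real entries, and that the suprema and infima are genuinely attained, which the explicit choices $x=Qe_{1}$ and $x=Qe_{n}$ guarantee.
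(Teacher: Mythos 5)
Your proof is correct and complete: the paper does not prove this lemma at all but simply cites it from the reference \cite[p.\,5]{Chan:07}, since it is the standard Rayleigh--Ritz characterization of the extreme eigenvalues of a real symmetric matrix. Your argument via the real orthogonal diagonalization, the convex-combination bound on the Rayleigh quotient, and the explicit extremizers $x=Qe_{1}$, $x=Qe_{n}$ is exactly the standard textbook proof that the cited source relies on, so there is nothing to fix.
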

\begin{lemma}\cite[p.\,27]{Quarteroni:07}\label{lemma3.2}
The matrix $A \in  {C}^{n\times n}$ is positive definite if and only if it is hermitian and has positive eigenvalues.
\end{lemma}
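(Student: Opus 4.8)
The plan is to prove the two implications separately, relying only on the spectral theorem for Hermitian matrices and a short polarization argument.

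For the ``if'' direction, assume $A$ is Hermitian and all its eigenvalues are positive. I would invoke the spectral theorem to write $A = U\Lambda U^{*}$ with $U$ unitary and $\Lambda = {\rm diag}(\lambda_1,\dots,\lambda_n)$, $\lambda_i>0$. Then for any $x\neq 0$, putting $y = U^{*}x$ (which is nonzero since $U$ is invertible) gives $x^{*}Ax = (U^{*}x)^{*}\Lambda (U^{*}x) = y^{*}\Lambda y = \sum_{i=1}^{n}\lambda_i|y_i|^{2} > 0$, so $A$ is positive definite. This direction is routine.

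For the ``only if'' direction, assume $x^{*}Ax>0$ for every nonzero $x\in \mathbb{C}^{n}$. The key point is that the quadratic form $q(x)=x^{*}Ax$ is then real for all $x$, hence $q(x)=\overline{q(x)}=(x^{*}Ax)^{*}=x^{*}A^{*}x$, which forces $x^{*}(A-A^{*})x=0$ for all $x\in\mathbb{C}^{n}$. I expect the only real (albeit minor) obstacle to be the implication that a matrix $B$ with $x^{*}Bx=0$ for all $x\in\mathbb{C}^{n}$ must be the zero matrix; this is precisely where working over $\mathbb{C}$ rather than $\mathbb{R}$ is essential (it fails over $\mathbb{R}$, as a $90^{\circ}$ rotation shows), and I would establish it by polarization, evaluating at $x=e_j$, $x=e_j+e_k$ and $x=e_j+\mathrm{i}e_k$ to recover each entry $b_{jk}$ of $B$. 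Applying this with $B=A-A^{*}$ yields $A=A^{*}$, i.e.\ $A$ is Hermitian.

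Finally, once $A$ is known to be Hermitian, its eigenvalues are automatically real, and for any eigenpair $(\lambda,v)$ with $v\neq 0$ we compute $0<v^{*}Av=\lambda\,v^{*}v=\lambda\|v\|^{2}$, whence $\lambda>0$. Combining this with the ``if'' direction above completes the equivalence.
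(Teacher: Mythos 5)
Your argument is correct: the spectral-theorem computation gives the ``if'' direction, and the complex polarization step (the only nontrivial point, which indeed fails over $\mathbb{R}$) correctly forces $A=A^{*}$ before reading off positivity of the eigenvalues from $v^{*}Av=\lambda\|v\|^{2}$. Note, however, that the paper does not prove this statement at all — it is quoted as a known result from the cited textbook of Quarteroni, Sacco and Saleri — so there is no in-paper proof to compare with; your write-up is simply the standard self-contained justification of that cited fact, and it is complete as it stands.
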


\begin{lemma}\cite[p.\,140]{Laub:05}\label{lemma3.3}
Let $A \in  \mathbb{{R}}^{m\times n}$, $B \in  \mathbb{{R}}^{r\times s}$, $C \in  \mathbb{{R}}^{n\times p}$, and $D \in  \mathbb{{R}}^{s\times t}$.
Then
\begin{equation*}
  (A \otimes B)(C \otimes D)=AC \otimes  BD \quad (\in  \mathbb{{R}}^{mr\times pt}).
\end{equation*}
Moreover, for all $A$ and $B$, $(A \otimes B)^T=A^T\otimes B^T$.
\end{lemma}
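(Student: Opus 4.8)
The plan is to prove both identities directly from the block form of the Kronecker product, so that the cited reference is not actually needed. First I would fix notation: write $A=(a_{ik})$ and $C=(c_{kj})$, so that $A\otimes B$ is the $m\times n$ array of blocks whose $(i,k)$ block is the $r\times s$ matrix $a_{ik}B$, and $C\otimes D$ is the $n\times p$ array of blocks whose $(k,j)$ block is the $s\times t$ matrix $c_{kj}D$. Before any computation I would record the conformability bookkeeping: $A\otimes B\in\mathbb{R}^{mr\times ns}$ and $C\otimes D\in\mathbb{R}^{ns\times pt}$, so the product $(A\otimes B)(C\otimes D)$ is well defined and of size $mr\times pt$, which is exactly the size of $AC\otimes BD$ since $AC\in\mathbb{R}^{m\times p}$ and $BD\in\mathbb{R}^{r\times t}$.

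The key step is the block-multiplication identity. Using the block partitions above, the $(i,j)$ block of $(A\otimes B)(C\otimes D)$ is
\[
  \sum_{k=1}^{n}(a_{ik}B)(c_{kj}D)=\Bigl(\sum_{k=1}^{n}a_{ik}c_{kj}\Bigr)BD=(AC)_{ij}\,BD,
\]
which is precisely the $(i,j)$ block of $AC\otimes BD$. Since $i$ and $j$ are arbitrary, the two matrices agree block by block and hence coincide, proving the mixed-product formula. For the transpose identity I would invoke the elementary fact that transposing a block matrix both transposes the block-index pattern and transposes each individual block: the $(i,j)$ block of $(A\otimes B)^{\mathrm T}$ equals the transpose of the $(j,i)$ block of $A\otimes B$, i.e. $(a_{ji}B)^{\mathrm T}=a_{ji}B^{\mathrm T}$, which is the $(i,j)$ block of $A^{\mathrm T}\otimes B^{\mathrm T}$. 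Equivalently, at the entry level one writes $(A\otimes B)_{(i,\alpha),(k,\beta)}=a_{ik}b_{\alpha\beta}$, whence $\bigl((A\otimes B)^{\mathrm T}\bigr)_{(i,\alpha),(k,\beta)}=a_{ki}b_{\beta\alpha}=(A^{\mathrm T})_{ik}(B^{\mathrm T})_{\alpha\beta}$.

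There is no genuine obstacle in this argument; it is pure bookkeeping. The only point where a slip is possible is keeping the six dimension parameters $m,n,r,s,p,t$ straight simultaneously and confirming at each line that every scalar--matrix scaling and every matrix product written above is actually well defined — but once the block sizes are laid out as in the first paragraph, each step is immediate. I would therefore present the proof as the two short displays above together with the conformability remark, rather than appealing to \cite{Laub:05}.
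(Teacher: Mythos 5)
Your proof is correct: the block-multiplication computation for the mixed-product identity and the blockwise (or entrywise) transpose argument are both sound, and the dimension bookkeeping is handled properly. Note, however, that the paper does not prove this statement at all — it is quoted as a known fact from the cited reference \cite[p.\,140]{Laub:05} — so there is no internal proof to compare against; your argument is simply the standard textbook derivation that the citation stands in for, and it would be a perfectly acceptable self-contained replacement for the citation.
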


\begin{lemma}\cite[p.\,141]{Laub:05}\label{lemma3.4}
Let $A \in  \mathbb{{R}}^{n\times n}$ and $\{\lambda_i\}_{i=1}^n$ be its eigenvalues; let $B \in  \mathbb{{R}}^{m\times m}$ and $\{\mu_j\}_{j=1}^m$ be its  eigenvalues .
Then the $mn$ eigenvalues of $A \otimes B$ are
\begin{equation*}
  \lambda_1\mu_1,\ldots,\lambda_1\mu_m, \lambda_2\mu_1,\ldots,\lambda_2\mu_m,\ldots,\lambda_n\mu_1\ldots,\lambda_n\mu_m.
\end{equation*}
\end{lemma}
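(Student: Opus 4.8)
The plan is to reduce to the case of upper triangular factors via the Schur decomposition and then simply read the eigenvalues off the diagonal. First I would pass to $\mathbb{C}$ (the eigenvalues of a real matrix may be complex, so this costs nothing) and invoke the Schur triangularization theorem: write $A=U_A T_A U_A^{*}$ and $B=U_B T_B U_B^{*}$ with $U_A,U_B$ unitary and $T_A,T_B$ upper triangular, where the diagonal entries of $T_A$ are $\lambda_1,\dots,\lambda_n$ in some order and those of $T_B$ are $\mu_1,\dots,\mu_m$ in some order.

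Next I would apply the mixed-product property of Lemma \ref{lemma3.3} twice to get $A\otimes B=(U_A T_A U_A^{*})\otimes(U_B T_B U_B^{*})=(U_A\otimes U_B)(T_A\otimes T_B)(U_A^{*}\otimes U_B^{*})$. Using Lemma \ref{lemma3.3} again together with the conjugate-transpose analogue of the identity $(A\otimes B)^{T}=A^{T}\otimes B^{T}$ stated there, one has $(U_A\otimes U_B)(U_A^{*}\otimes U_B^{*})=(U_A U_A^{*})\otimes(U_B U_B^{*})=I_n\otimes I_m$, so $U_A\otimes U_B$ is unitary. Hence $A\otimes B$ is similar to $T_A\otimes T_B$ and therefore has the same eigenvalues with the same multiplicities.

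The last step is the elementary bookkeeping observation that, with the block ordering used in the definition of the Kronecker product, $T_A\otimes T_B$ is again upper triangular whenever $T_A$ and $T_B$ are, and that its diagonal consists precisely of the products $(T_A)_{ii}(T_B)_{jj}$ for $1\le i\le n$ and $1\le j\le m$; since the eigenvalues of an upper triangular matrix are exactly its diagonal entries (with multiplicity), the $mn$ eigenvalues of $A\otimes B$ are the products $\lambda_i\mu_j$, listed in the order stated. I expect the only place that needs care is this final step — verifying that the block in position $(i,i')$, entry position $(j,j')$, of $T_A\otimes T_B$ vanishes unless $i>i'$, or $i=i'$ and $j\ge j'$ — together with the reminder to work over $\mathbb{C}$ so that the ordinary (rather than real) Schur form is available even though $A$ and $B$ are real. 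As an aside, in the setting where this lemma is applied in the present paper the relevant matrices are symmetric, so one may alternatively use an orthonormal eigenbasis and the rank-one tensor eigenvectors $x_i\otimes y_j$ directly, bypassing triangularization altogether.
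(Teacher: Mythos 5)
The paper does not actually prove this lemma: it is quoted from \cite{Laub:05} as a known fact, so there is no internal proof to compare against. Your Schur-triangularization argument is correct and complete, and it is essentially the standard proof given in that reference: pass to $\mathbb{C}$, write $A=U_AT_AU_A^{*}$ and $B=U_BT_BU_B^{*}$, use the mixed-product property of Lemma \ref{lemma3.3} (plus its conjugate-transpose analogue) to conclude that $U_A\otimes U_B$ is unitary and that $A\otimes B$ is unitarily similar to the upper triangular matrix $T_A\otimes T_B$, whose diagonal entries are exactly the products $\lambda_i\mu_j$. One small wording slip in your final bookkeeping sentence: for upper triangularity you want the entry of $T_A\otimes T_B$ in block row $i$, block column $i'$, inner position $(j,j')$ to \emph{vanish whenever} $i>i'$, or $i=i'$ and $j>j'$ (equivalently, nonzero entries occur only if $i<i'$, or $i=i'$ and $j\le j'$); as written, your ``vanishes unless $i>i'$, or $i=i'$ and $j\ge j'$'' describes a lower triangular pattern, though this is clearly a typo and does not affect the substance of the argument. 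Your closing aside is also pertinent: in the way Lemma \ref{lemma3.4} is used in Section \ref{sec3} the factors are symmetric, so one could instead diagonalize orthogonally and use the eigenvectors $x_i\otimes y_j$ directly, avoiding the Schur form altogether.
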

\begin{lemma}\cite[p.\,396]{Golub:96}\label{lemma3.5}
If $P$ and $P+Q$ are n-by-n symmetric matrices, then
\begin{equation*}
  \lambda_k(P)+ \lambda_1(Q) \leq \lambda_k(P+Q) \leq \lambda_k(P) + \lambda_n(Q), \quad k=1,2,\ldots,n.
\end{equation*}
\end{lemma}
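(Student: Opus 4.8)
The plan is to derive both inequalities from the Courant--Fischer min--max characterization of the eigenvalues of a real symmetric matrix, using only the spectral theorem together with the Rayleigh quotient bounds already recorded in Lemma \ref{lemma3.1}. Throughout I order the eigenvalues nondecreasingly, $\lambda_1(\cdot)\le\lambda_2(\cdot)\le\cdots\le\lambda_n(\cdot)$, and I note that $Q=(P+Q)-P$ is symmetric since $P$ and $P+Q$ are.

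First I would prove the lower bound $\lambda_k(P+Q)\ge\lambda_k(P)+\lambda_1(Q)$ by a dimension-counting argument. Let $\{u_i\}_{i=1}^n$ be an orthonormal basis of eigenvectors of $P+Q$ and $\{v_i\}_{i=1}^n$ one of $P$ (spectral theorem). Put $U=\mathrm{span}\{u_1,\dots,u_k\}$ and $V=\mathrm{span}\{v_k,v_{k+1},\dots,v_n\}$, so that $\dim U+\dim V=k+(n-k+1)=n+1>n$; hence $U\cap V$ contains some nonzero $x$. Expanding $x$ in the $u_i$ shows that $x^{T}(P+Q)x/(x^{T}x)$ is a convex combination of $\lambda_1(P+Q),\dots,\lambda_k(P+Q)$, hence $\le\lambda_k(P+Q)$; expanding $x$ in the $v_i$ shows $x^{T}Px/(x^{T}x)\ge\lambda_k(P)$; and $x^{T}Qx/(x^{T}x)\ge\lambda_1(Q)$ by Lemma \ref{lemma3.1}. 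Adding the last two estimates and comparing with the first gives the claim.

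For the upper bound I would simply reuse the lower bound already established, applied to the pair $(P+Q,\,-Q)$ in place of $(P,\,Q)$: since $P=(P+Q)+(-Q)$,
\begin{equation*}
\lambda_k(P)\ge\lambda_k(P+Q)+\lambda_1(-Q)=\lambda_k(P+Q)-\lambda_n(Q),
\end{equation*}
where I use $\lambda_1(-Q)=-\lambda_n(Q)$. Rearranging yields $\lambda_k(P+Q)\le\lambda_k(P)+\lambda_n(Q)$, which completes the proof.

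There is no genuinely hard step here; the only points needing care are (i) the nonemptiness of $U\cap V$, which is precisely why the index ranges $1,\dots,k$ and $k,\dots,n$ must be chosen so the dimensions sum to $n+1$, and (ii) tracking the order reversal under $Q\mapsto-Q$, namely $\lambda_j(-Q)=-\lambda_{n+1-j}(Q)$. Alternatively, and as the paper does, one may simply quote the statement from \cite{Golub:96}, since it is a classical form of Weyl's monotonicity/perturbation inequality.
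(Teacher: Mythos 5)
Your proposal is correct. Note, however, that the paper offers no proof of this lemma at all: it is stated as a quoted classical result (Weyl's perturbation/monotonicity inequality) with the citation \cite{Golub:96}, exactly as you observe in your closing sentence, so there is no internal argument to compare against. What you supply is a complete and standard self-contained proof: the lower bound $\lambda_k(P+Q)\ge\lambda_k(P)+\lambda_1(Q)$ via the Courant--Fischer dimension-counting argument (the subspaces of dimensions $k$ and $n-k+1$ must intersect nontrivially, and the resulting vector is squeezed between the two Rayleigh-quotient estimates together with $x^{T}Qx/(x^{T}x)\ge\lambda_1(Q)$, which is precisely Lemma \ref{lemma3.1}), and the upper bound by the clean trick of applying the lower bound to the pair $(P+Q,-Q)$ with $\lambda_1(-Q)=-\lambda_{n+1-1}(Q)=-\lambda_n(Q)$. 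Both steps are sound, and you correctly record the one hypothesis-level point that $Q=(P+Q)-P$ is itself symmetric. The only cosmetic remark is that Golub and Van Loan state the result with eigenvalues in nonincreasing order, so the roles of $\lambda_1$ and $\lambda_n$ are swapped there relative to the nondecreasing convention you (and the lemma as displayed in the paper) use; the mathematical content is identical, and your version is the one consistent with how $\lambda_{\min}$ and $\lambda_{\max}$ are used in Lemma \ref{lemma3.1} and in the proof of Lemma \ref{lemma3.6}. What your approach buys is self-containedness at the cost of a few lines; what the paper's citation buys is brevity for a textbook fact that is not the point of the section.
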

\begin{lemma}\label{lemma3.6}
Let ${\bf A}^{(1)}:={\bf A}_h$ be defined by
(\ref{3.1}) and ${\bf A}^{(k)}={\bf I}_k^{k-1}{\bf A}^{(k-1)}{\bf I}_{k-1}^{k}$. Then
\begin{equation*}
\frac{\omega}{\lambda_{\max}({\bf A}_k) }({\bf v}^k,{\bf v}^k) \leq ({\bf S}_k{\bf v}^k,{\bf v}^k)\leq ({\bf A}_k^{-1}{\bf v}^k,{\bf v}^k)
\quad \forall {\bf v}^k\in \mathcal{B}_k,
\end{equation*}
where ${\bf A}_k={\bf A}^{(K-k+1)}$, ${\bf S}_k=\omega {\bf D}_k^{-1}$, $\omega \in (0,1/4]$ and ${\bf D}_k$ is the diagonal of ${\bf A}_k$.
\end{lemma}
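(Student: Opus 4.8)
The plan is to mirror the proof of Lemma~\ref{lemma2.6}, but exploiting the tensor structure that survives Galerkin coarsening. Write $\mathbf{A}^{(k)}$ for the matrix obtained from $\mathbf{A}^{(1)}=\mathbf{A}_h$ after $k-1$ coarsening steps, so that $\mathbf{A}_k=\mathbf{A}^{(K-k+1)}$, and similarly for $\mathbf{D}_k$. The first step is to show, by induction on $k$, that
\[
\mathbf{A}^{(k)}=c_1\,I^{(k)}\!\otimes I^{(k)}+c_2\bigl(I^{(k)}\!\otimes L^{(k)}+L^{(k)}\!\otimes I^{(k)}\bigr),
\]
where $I^{(k)}$ and $L^{(k)}$ are obtained from $I=\mathrm{tridiag}(0,1,0)$ and $L=\mathrm{tridiag}(-1,2,-1)$ by the $1$D coarsening of Corollary~\ref{corollary2.5}. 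Indeed, $\mathbf{I}_{k-1}^{k}=I_{k-1}^{k}\otimes I_{k-1}^{k}$ by (\ref{3.3}), hence $\mathbf{I}_{k}^{k-1}=I_{k}^{k-1}\otimes I_{k}^{k-1}$ (using $\mathbf{I}_{k-1}^k=4(\mathbf{I}_k^{k-1})^T$, $I_{k-1}^k=2(I_k^{k-1})^T$ and Lemma~\ref{lemma3.3}), and the mixed--product rule of Lemma~\ref{lemma3.3} lets one push the coarsening onto each Kronecker factor separately; the base case $k=1$ is (\ref{3.1}). Corollary~\ref{corollary2.5} then gives $L^{(k)}=4^{-(k-1)}L$ and $I^{(k)}=p_kI+q_kL$ with $p_k=(6C_k+2^{k-1})/8^{k-1}>0$ and $q_k=-C_k/8^{k-1}\le 0$. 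In particular $I^{(k)}\preceq p_k\mathbf I$ (since $q_kL\preceq 0$), both $I^{(k)}$ and $L^{(k)}$ are symmetric positive definite Toeplitz matrices (Galerkin coarsenings of symmetric positive definite matrices), and therefore $\mathbf{A}^{(k)}$ is symmetric positive definite (Lemmas~\ref{lemma3.2} and~\ref{lemma3.4}) with \emph{constant} main diagonal
\[
d_k:=c_1\bigl(a^{I}_{0,k}\bigr)^2+2c_2\,a^{I}_{0,k}\,a^{L}_{0,k},\qquad a^{I}_{0,k}=\tfrac{4C_k+2^{k-1}}{8^{k-1}},\quad a^{L}_{0,k}=\tfrac{2}{4^{k-1}},
\]
so that $\mathbf{D}_k=d_k\,\mathbf I$.

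Because $\mathbf{D}_k$ is a positive multiple of the identity and $\mathbf{A}_k$ is symmetric positive definite, the two inequalities of the lemma become elementary reformulations of a two--sided bound on $\lambda_{\max}(\mathbf{A}_k)$: the right inequality $(\mathbf{S}_k\mathbf{v}^k,\mathbf{v}^k)\le(\mathbf{A}_k^{-1}\mathbf{v}^k,\mathbf{v}^k)$ for all $\mathbf{v}^k$ is equivalent to $\lambda_{\max}(\mathbf{A}_k)\le d_k/\omega$, while the left inequality is equivalent to $d_k\le\lambda_{\max}(\mathbf{A}_k)$. The latter is immediate from Lemma~\ref{lemma3.1} by testing the Rayleigh quotient with the first unit vector. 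Since $\omega\in(0,1/4]$, the former follows as soon as one shows $\lambda_{\max}(\mathbf{A}^{(k)})\le 4\,d_k$.

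To establish $\lambda_{\max}(\mathbf{A}^{(k)})\le 4d_k$, I would apply the subadditivity of $\lambda_{\max}$ (Lemma~\ref{lemma3.5}) to the three Kronecker summands, followed by $\lambda_{\max}(X\otimes Y)=\lambda_{\max}(X)\lambda_{\max}(Y)$ for positive definite $X,Y$ (Lemma~\ref{lemma3.4}); together with $\lambda_{\max}(I^{(k)})\le p_k$ and $\lambda_{\max}(L^{(k)})=4^{-(k-1)}\lambda_{\max}(L)\le 4^{-(k-1)}\!\cdot 4$ this gives
\[
\lambda_{\max}(\mathbf{A}^{(k)})\le c_1 p_k^2+2c_2\,p_k\cdot\frac{4}{4^{k-1}}=c_1 p_k^2+\frac{8c_2 p_k}{4^{k-1}} .
\]
Comparing the $c_1$-- and $c_2$--parts of the right-hand side separately with $4d_k=4c_1(a^{I}_{0,k})^2+8c_2\,a^{I}_{0,k}a^{L}_{0,k}$, it remains only to check $p_k\le 2a^{I}_{0,k}$ and $p_k/4^{k-1}\le a^{I}_{0,k}a^{L}_{0,k}$; inserting the closed forms and $C_k=2^{k-2}(2^{2k-2}-1)/3$, both collapse to the trivial inequality $0\le 2C_k+2^{k-1}$. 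Hence $\lambda_{\max}(\mathbf{A}^{(k)})\le 4d_k\le d_k/\omega$, and the lemma follows.

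The one laborious point is the first step — getting the Kronecker decomposition right and keeping the bookkeeping of $C_k,p_k,q_k,d_k$ straight; once $\mathbf{D}_k=d_k\mathbf{I}$ together with the closed forms for $p_k$ and $d_k$ are in hand, everything else is routine. As an alternative to the $\lambda_{\max}$-subadditivity step one may instead Gershgorin-estimate $\mathbf{A}^{(k)}$ directly (as in the proof of Lemma~\ref{lemma2.6}): its diagonal is the constant $d_k$ and, using the Kronecker structure of the off-diagonal entries, its absolute off-diagonal row sum is bounded by $c_1 p_k^2+8c_2 p_k/4^{k-1}-d_k$, which yields the same bound $\lambda_{\max}(\mathbf{A}^{(k)})\le c_1 p_k^2+8c_2 p_k/4^{k-1}$.
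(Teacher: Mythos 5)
Your proposal is correct and follows essentially the same route as the paper's proof: you use the Kronecker structure preserved by Galerkin coarsening together with the $1$D formulas of Corollary~\ref{corollary2.5} (your $p_k$, $4^{-(k-1)}$, $d_k$ are exactly the paper's $3\theta_1-2\theta_2$, $\theta_2$, $\eta_2$), bound $\lambda_{\max}({\bf A}^{(k)})$ by the same quantity $\eta_1=c_1p_k^2+8c_2p_k\theta_2$, and get the lower bound by testing the Rayleigh quotient with a unit vector. The only (harmless) differences are cosmetic: you bound $\lambda_{\max}(I^{(k)})\le p_k$ via the splitting $I^{(k)}=p_kI+q_kL$ with $q_k\le 0$ instead of citing the explicit tridiagonal Toeplitz eigenvalues, and you spell out the comparison $\eta_1\le 4\eta_2$ which the paper states without detail.
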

\begin{proof}
Given a sequence ${\bf Z}^{(k)}$ and $Z^{(k)}$, $k\geq 1$, we denote
\begin{equation}\label{3.6}
{\bf Z}^{(k)}={\bf I}_k^{k-1}{\bf Z}^{(k-1)}{\bf I}_{k-1}^{k}~~{\rm and } ~~ Z^{(k)}= I_k^{k-1} Z^{(k-1)} I_{k-1}^{k}.
\end{equation}
In the following, ${\bf Z}$ and $Z$ given in (\ref{3.6}) can also be taken as ${\bf A}$ and $M$, etc.

Taking the block matrix
\begin{equation}\label{3.7}
  {\bf Z}^{(1)}=M^{(1)}\otimes N^{(1)},
\end{equation}
there exists
\begin{equation}\label{3.8}
  {\bf Z}^{(k)}= \left(I_k^{k-1} M^{(k-1)} I_{k-1}^{k} \right)\otimes \left(I_k^{k-1} N^{(k-1)} I_{k-1}^{k}\right)=M^{(k)}\otimes N^{(k)}.
\end{equation}
Combining (\ref{3.6})-(\ref{3.8}) and ${\bf A}^{(1)}=c_1I \otimes I + c_2\left(I \otimes L +L \otimes I\right)$, we obtain
\begin{equation}\label{3.9}
\begin{split}
 {\bf A}^{(k)}
=&c_1I^{(k)}\otimes I^{(k)}+c_2\left(I^{(k)}\otimes L^{(k)}+ L^{(k)} \otimes I^{(k)}\right).
\end{split}
\end{equation}
According to Corollary \ref{corollary2.5} and (\ref{2.14}), we have
\begin{equation}\label{3.10}
\begin{split}
  I^{(k)}&=\frac{1}{8^{k-1}}{\rm tridiag}(C_k, 4C_k+2^{k-1}, C_k)=\theta_1 I+\theta_3\widetilde{L};\\
  L^{(k)}&=\frac{1}{8^{k-1}}{\rm tridiag}(-2^{k-1}, 2^{k}, -2^{k-1})=\theta_2L,
\end{split}
\end{equation}
where $\widetilde{L}={\rm tridiag}(1, 2, 1)$, $L={\rm tridiag}(-1, 2, -1)$ and
\begin{equation}\label{3.11}
  \theta_1=\frac{2C_k+2^{k-1}}{ 8^{k-1}}>0,~~~~\theta_2=\frac{2^{k-1}}{ 8^{k-1}}>0,~~~~\theta_3=\frac{C_k}{ 8^{k-1}}\geq 0.
\end{equation}
Next we prove
$$1\leq \lambda_{\max}\left(\left({\bf D}^{(k)}\right)^{-1}{\bf A}^{(k)}\right)<4.$$
The maximum eigenvalues of   $I^{(k)}$ and $L^{(k)}$ are, respectively, given by \cite[p.\,702]{Stoer:02}
\begin{equation*}
  \lambda_{\max}\left(I^{(k)}\right)<\frac{ 6C_k+2^{k-1}}{8^{k-1}}=3\theta_1-2\theta_2,~~~~
  \lambda_{\max}\left(L^{(k)}\right)<\frac{ 2^{k+1}}{8^{k-1}}=4\theta_2.
\end{equation*}
Using Lemmas \ref{lemma3.3}-\ref{lemma3.5} and (\ref{3.9}), we obtain
\begin{equation}\label{3.12}
  \lambda_{\max}\left({\bf A}^{(k)}\right)<\eta_1~~~~{\rm with}~~~~\eta_1=c_1\left(3\theta_1-2\theta_2\right)^2
  +8c_2\left(3\theta_1-2\theta_2\right) \theta_2,
\end{equation}
and
\begin{equation}\label{3.13}
\begin{split}
\eta_2:=\lambda\left({\bf D}^{(k)}\right)
&=  \lambda_{\max}\left({\bf D}^{(k)}\right)=\lambda_{\min}\left({\bf D}^{(k)}\right)\\
&=c_1\left(\frac{ 4C_k+2^{k-1}}{8^{k-1}}\right)^2
  +2c_2\frac{ 2^{k}}{8^{k-1}}\cdot\frac{ 4C_k+2^{k-1}}{8^{k-1}}\\
&=c_1\left(2\theta_1-\theta_2\right)^2
  +4c_2\left(2\theta_1-\theta_2\right)\theta_2,
\end{split}
\end{equation}
which yields
\begin{equation}\label{3.14}
  \lambda_{\max}\left(\left({\bf D}^{(k)}\right)^{-1}{\bf A}^{(k)}\right)<\frac{\eta_1}{\eta_2}<4.
\end{equation}
If we take  ${\bf x}=[1,0,\ldots,0]^T$, then
$$\lambda_{\max}({\bf A}^{(k)})\geq \frac{{\bf x}^T{\bf A}^{(k)}{\bf x}}{{\bf x}^T{\bf x}}=\lambda_{\max}\left({\bf D}^{(k)}\right).$$
The proof is completed.
\end{proof}

\begin{lemma}\label{lemma3.7}
Let ${\bf A}^{(1)}:={\bf A}_h$ be defined by
(\ref{3.1}) and ${\bf A}^{(k)}={\bf I}_k^{k-1}{\bf A}^{(k-1)}{\bf I}_{k-1}^{k}$. Then
\begin{equation*}
   \min_{{\bf v}^{k-1} \in \mathcal{B}_{k-1} }||{\bf v}^k-{\bf I}_{k-1}^k{\bf v}^{k-1}||_{{\bf A}_k}^2
   \leq {\bf m}_0 ||{\bf A}_k{\bf v}^k||_{{\bf D}_k^{-1}}^2 \quad  \forall {\bf v}^k \in \mathcal{B}_k
\end{equation*}
with ${\bf A}_k={\bf A}^{(K-k+1)}$ and ${\bf m}_0=1536<\infty$.
\end{lemma}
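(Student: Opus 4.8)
The plan is to reduce the 2D estimate to the corresponding 1D analysis by exploiting the tensor-product structure of all the operators involved. From the proof of Lemma~\ref{lemma3.6}, and in particular formulas~(\ref{3.9})--(\ref{3.11}), the coarse operator splits as ${\bf A}^{(k)} = c_1 I^{(k)}\otimes I^{(k)} + c_2\left(I^{(k)}\otimes L^{(k)} + L^{(k)}\otimes I^{(k)}\right)$ with $I^{(k)} = \theta_1 I + \theta_3\widetilde L$ and $L^{(k)} = \theta_2 L$, while the prolongation is ${\bf I}_{k-1}^k = I_{k-1}^k\otimes I_{k-1}^k$ and the diagonal is a scalar multiple of the identity by~(\ref{3.13}). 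So the first step is to write, for ${\bf v}^k\in\mathcal B_k$ and any ${\bf v}^{k-1}\in\mathcal B_{k-1}$, the error ${\bf v}^k-{\bf I}_{k-1}^k{\bf v}^{k-1}$ and estimate $\|{\bf v}^k-{\bf I}_{k-1}^k{\bf v}^{k-1}\|_{{\bf A}_k}^2$ in terms of Euclidean-type norms, bounding $\lambda_{\max}({\bf A}^{(k)})$ from above via~(\ref{3.12}).

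Second, I would choose the coarse vector componentwise by the same ``cutting'' prescription used in Lemma~\ref{lemma2.8}: take ${\bf v}^{k-1}$ to be the restriction of ${\bf v}^k$ to the even--even grid points, which in tensor notation is ${\bf v}^{k-1} = (T\otimes T){\bf v}^k$ with $T$ the matrix~(\ref{2.15}). Then ${\bf v}^k - {\bf I}_{k-1}^k{\bf v}^{k-1} = \left(I\otimes I - (I_{k-1}^kT)\otimes(I_{k-1}^kT)\right){\bf v}^k$, and using the algebraic identity $I\otimes I - R\otimes R = (I-R)\otimes I + R\otimes(I-R)$ with $R = I_{k-1}^kT$, together with the triangle inequality, the error splits into a ``horizontal'' and a ``vertical'' piece, each of which is governed by the 1D bound~(\ref{2.17}) acting in one factor. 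This yields $\|{\bf v}^k-{\bf I}_{k-1}^k{\bf v}^{k-1}\|^2 \le C\,\|(L\otimes I + I\otimes L){\bf v}^k\|^2$ for an explicit constant $C$, after passing through the fact (from Lemma~\ref{lemma2.7}/Lemma~\ref{lemma2.8}) that $L$ and $\widetilde L = {\rm tridiag}(1,2,1)$ multiply to a symmetric positive definite matrix, so that $\|L\,w\|\le\|(\alpha L + \beta\widetilde L)\,w\|$ for $\alpha,\beta\ge0$ with $\alpha>0$.

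Third, I would produce the matching lower bound on $\|{\bf A}_k{\bf v}^k\|_{{\bf D}_k^{-1}}^2$: since ${\bf D}_k = \eta_2 I$ is scalar, it suffices to bound $\|{\bf A}^{(k)}{\bf v}^k\|^2$ from below. Writing ${\bf A}^{(k)} = c_1 I^{(k)}\otimes I^{(k)} + c_2\theta_2\left(I^{(k)}\otimes L + L\otimes I^{(k)}\right)$ and dropping the positive semidefinite $c_1$ term, one is left with $c_2\theta_2\left(I^{(k)}\otimes L + L\otimes I^{(k)}\right)$, which commutes with $L\otimes I + I\otimes L$ and whose eigenvalues, via Lemmas~\ref{lemma3.3}--\ref{lemma3.4}, dominate $\theta_1$ times the eigenvalues of $c_2\theta_2(L\otimes I + I\otimes L)$; this gives $\|{\bf A}^{(k)}{\bf v}^k\|^2 \ge c_2^2\theta_1^2\theta_2^2\,\|(L\otimes I + I\otimes L){\bf v}^k\|^2$. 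Combining with the upper bound from the first two steps and inserting the explicit values of $\theta_1,\theta_2,\theta_3,\eta_1,\eta_2$ and $C_k = 2^{k-2}(2^{2k-2}-1)/3$, all the $k$-dependent factors cancel and one checks that the resulting constant is bounded by ${\bf m}_0 = 1536$ uniformly in $k$.

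The main obstacle I anticipate is the bookkeeping in the last step: one must verify that the ratio of the (sum of several $k$-dependent) numerator terms to the denominator is uniformly bounded, analogously to the estimates~(\ref{2.20})--(\ref{2.22}) in the 1D case but now with cross terms coming from the two summands $I^{(k)}\otimes L^{(k)}$ and $L^{(k)}\otimes I^{(k)}$ and from the off-diagonal part $\theta_3\widetilde L$ of $I^{(k)}$. A clean way to control this is to bound each occurrence of $I^{(k)}$ above by $(3\theta_1-2\theta_2)I$ in the quadratic-form sense (as already done in Lemma~\ref{lemma3.6}) and below by $\theta_1 I$ wherever a lower bound is needed, reducing everything to ratios of the fixed quantities $\theta_1,\theta_2$; since $\theta_1/\theta_2 = (2C_k+2^{k-1})/2^{k-1}$ grows but always appears divided by a matching power, the worst case is attained at a small value of $k$ and the constant $1536$ emerges as an explicit (non-optimized) bound. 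The positivity hypotheses $c_1\ge0$, $c_2>0$ and the weighting $\omega\in(0,1/4]$ are exactly what is needed to keep the semidefinite-dropping and the Gerschgorin-type estimates valid, so no further case analysis on the sign of the entries (unlike the 1D lemma) is required here.
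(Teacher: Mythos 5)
Your overall route is the paper's route: the same choice of coarse vector ${\bf v}^{k-1}=(T\otimes T){\bf v}^k$, the same reduction of the interpolation error to the 1D estimate (\ref{2.17}) factor by factor, the same upper bound through $\lambda_{\max}({\bf A}^{(k)})<\eta_1$, and the same final uniformity check. The genuine gap is in your third step, the lower bound on $\|{\bf A}^{(k)}{\bf v}\|^2$. By ``dropping the positive semidefinite $c_1$ term'' you retain only $c_2\theta_2\bigl(I^{(k)}\otimes L+L\otimes I^{(k)}\bigr)$ and conclude $\|{\bf A}^{(k)}{\bf v}\|^2\geq c_2^2\theta_1^2\theta_2^2\,\|(L\otimes I+I\otimes L){\bf v}\|^2$. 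But your upper bound necessarily carries the factor $\eta_1=c_1(3\theta_1-2\theta_2)^2+8c_2(3\theta_1-2\theta_2)\theta_2$ and the diagonal weight $\eta_2=c_1(2\theta_1-\theta_2)^2+4c_2(2\theta_1-\theta_2)\theta_2$, so the constant your chain certifies is of order $\eta_1\eta_2/(c_2^2\theta_1^2\theta_2^2)$, which for $c_1>0$ contains a term of order $(c_1/c_2)^2(\theta_1/\theta_2)^2$ with $\theta_1/\theta_2=(2^{2k-2}+2)/3\to\infty$ as $k$ grows. Hence no bound uniform in $k$ (and in $c_1,c_2$) follows from your estimate, let alone ${\bf m}_0=1536$; the claim at the end of your third step that ``all the $k$-dependent factors cancel'' is exactly what fails once the $c_1$ mass term has been discarded from the denominator, and bounding $I^{(k)}$ above and below by multiples of $I$ does not repair this.

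The paper avoids this by keeping the $c_1$ contribution in the lower bound: it works with ${\bf A}_1^{(k)}=c_1\theta_1^2\,I\otimes I+c_2\theta_1\theta_2(I\otimes L+L\otimes I)$, squares it to get ${\bf B}^{(k)}$, and then converts \emph{both} pieces into multiples of $I\otimes L^2+L^2\otimes I$, using $L^2\preceq 16I$ (so $I\otimes L^2+L^2\otimes I\preceq 32\,I\otimes I$) and $L^2\preceq 4L$ (so $I\otimes L+L\otimes I\succeq \tfrac14(I\otimes L^2+L^2\otimes I)$). This yields ${\bf C}^{(k)}=\eta_3(I\otimes L^2+L^2\otimes I)$ with $\eta_3=\tfrac{c_1^2\theta_1^4}{32}+\tfrac{c_1c_2\theta_1^3\theta_2}{2}+c_2^2\theta_1^2\theta_2^2$, whose $c_1^2$ and $c_1c_2$ terms are precisely what matches $\eta_1\eta_2\lesssim 4\bigl(c_1\theta_1^2+2c_2\theta_1\theta_2\bigr)^2$ and makes $\tfrac{4\eta_3}{3\eta_1\eta_2}>\tfrac1{1536}$ uniformly. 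To salvage your argument you must perform the analogous two-term lower bound instead of discarding $c_1 I^{(k)}\otimes I^{(k)}$. (A minor further point: your auxiliary claim $\|Lw\|\le\|(\alpha L+\beta\widetilde L)w\|$ for all $\alpha>0$, $\beta\ge0$ is false as stated for small $\alpha$ by scaling; it holds in the normalized form actually needed, with coefficient at least one in front of $L$, since $L$ and $\widetilde L=4I-L$ commute.)
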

\begin{proof}
Let an odd number $M_k$ be defined  by (\ref{3.2}).
For any
\begin{equation*}
  {\bf v}^k=({\bf v}^k_1,{\bf v}^k_2,\ldots,{\bf v}^k_{M_k})^{\rm T} \in \mathcal{B}_k~~~~{\rm and }~~~~{\bf v}^k_0={\bf v}^k_{M_k+1}=0
\end{equation*}
with
${\bf v}^k_i=({\bf v}^k_{i,1},{\bf v}^k_{i,2},\ldots,{\bf v}^k_{i,{M_k}})^{\rm T},$
and taking
\begin{equation*}
{\bf v}^{k-1}=({\bf \widetilde{v}}^k_2,{\bf \widetilde{v}}^k_4,\ldots,{\bf \widetilde{v}}^k_{M_k-1})^{\rm T} \in \mathcal{B}_{k-1},
\end{equation*}
with ${\bf \widetilde{v}}^k_i=({\bf v}^k_{i,2},{\bf v}^k_{i,4},\ldots,{\bf v}^k_{i,{M_k-1}})^{\rm T},$
there exists
\begin{equation*}
\begin{split}
{\bf \widetilde{v}}_i^{k}=T{\bf v}^k_i,
\end{split}
\end{equation*}
where the cutting  matrix $T$ is defined by (\ref{2.15}).
Using the  above equations, it yields
\begin{equation}\label{3.15}
  {\bf v}^{k-1}=\left(T\otimes T\right) {\bf v}^{k}.
\end{equation}
From (\ref{3.3}) and (\ref{3.15}), we get
\begin{equation}\label{3.16}
 {\bf I}_{k-1}^k{\bf v}^{k-1}=\left(PT\otimes PT\right) {\bf v}^{k}.
\end{equation}
Thus
\begin{equation}\label{3.17}
\begin{split}
 &{\bf v}^{k}- {\bf I}_{k-1}^k{\bf v}^{k-1}\\
&=\left(I\otimes I-PT\otimes PT\right) {\bf v}^{k},\\
&=\Big({\bf v}^k_1-\frac{PT}{2}\left({\bf v}^k_0+{\bf v}^k_2\right),\left(I-PT\right){\bf v}^k_2,
{\bf v}^k_3-\frac{PT}{2}\left({\bf v}^k_2+{\bf v}^k_4\right), \left(I-PT\right){\bf v}^k_4,\\
&\qquad \ldots,\left(I-PT\right){\bf v}^k_{M_k-1}, {\bf v}^k_{M_k}-\frac{PT}{2}\left({\bf v}^k_{M_k-1}+{\bf v}^k_{M_k+1}\right) \Big)^{\rm T}.
\end{split}
\end{equation}
Hence, we obtain
\begin{equation}\label{3.18}
\begin{split}
 &\|{\bf v}^{k}- {\bf I}_{k-1}^k{\bf v}^{k-1}\|^2\\
 &=\sum_{i=1}^{(M_k+1)/2}\Big|\Big| {\bf v}^k_{2i-1}-\frac{PT}{2}\left({\bf v}^k_{2i-2} +{\bf v}^k_{2i}\right)\Big|\Big|^2
   +\sum_{i=1}^{(M_k+1)/2}\Big|\Big| \left(I-PT\right){\bf v}^k_{2i}\Big|\Big|^2\\
&\leq 2\sum_{i=1}^{(M_k+1)/2}\Big|\Big| {\bf v}^k_{2i-1}-\frac{{\bf v}^k_{2i-2} +{\bf v}^k_{2i}}{2}\Big|\Big|^2
   +3\sum_{i=1}^{(M_k+1)/2}\Big|\Big| \left(I-PT\right){\bf v}^k_{2i}\Big|\Big|^2,
\end{split}
\end{equation}
where we use
\begin{equation*}
\begin{split}
&\Big|\Big| {\bf v}^k_{2i-1}-\frac{PT}{2}\left({\bf v}^k_{2i-2} +{\bf v}^k_{2i}\right)\Big|\Big|^2\\
&\leq 2\Big|\Big| {\bf v}^k_{2i-1}-\frac{{\bf v}^k_{2i-2} +{\bf v}^k_{2i}}{2}\Big|\Big|^2
+\Big|\Big| \left(I-PT\right){\bf v}^k_{2i-2}\Big|\Big|^2+\Big|\Big| \left(I-PT\right){\bf v}^k_{2i}\Big|\Big|^2.
\end{split}
\end{equation*}
From (\ref{2.17}), we have
$||\left(I-PT\right){\bf v}^k_{2i}||^2 \leq \frac{1}{4}||L{\bf v}^k_{2i}||^2,$
which yields
\begin{equation}\label{3.19}
\begin{split}
\sum_{i=1}^{(M_k+1)/2}\Big|\Big| \left(I-PT\right){\bf v}^k_{2i}\Big|\Big|^2
&\leq \frac{1}{4} \sum_{i=1}^{(M_k+1)/2}||L{\bf v}^k_{2i}||^2\\
&\leq \frac{1}{4} \sum_{i=1}^{M_k}||L{\bf v}^k_{i}||^2
= \frac{1}{4} \left(\left(I\otimes L^2\right) {\bf v}^{k},{\bf v}^{k}\right),
\end{split}
\end{equation}
and
\begin{equation}\label{3.20}
\begin{split}
\sum_{i=1}^{(M_k+1)/2}\Big|\Big| {\bf v}^k_{2i-1}-\frac{{\bf v}^k_{2i-2} +{\bf v}^k_{2i}}{2}\Big|\Big|^2
&\leq \sum_{i=1}^{M_k}\Big|\Big| {\bf v}^k_{i}-\frac{{\bf v}^k_{i-1} +{\bf v}^k_{i+1}}{2}\Big|\Big|^2\\
&=\frac{1}{4}\left(\left(L^2\otimes I \right) {\bf v}^{k},{\bf v}^{k}\right).
\end{split}
\end{equation}
According to  (\ref{3.12}) and (\ref{3.18})-(\ref{3.20}), there exists
\begin{equation}\label{3.21}
\begin{split}
&||{\bf v}^{K-k+1}-{\bf I}_{K-k}^{K-k+1}{\bf v}^{K-k}||_{{\bf A}^{(k)}}^2\\
& \leq \lambda_{\max}\left({\bf A}^{(k)}\right) ||{\bf v}^{K-k+1}-{\bf I}_{K-k}^{K-k+1}{\bf v}^{K-k}||^2\\
& \leq \frac{3\eta_1}{4} \left(\left( I\otimes L^2 +  L^2\otimes I\right) {\bf v}^{K-k+1},{\bf v}^{K-k+1}\right).
\end{split}
\end{equation}

From Lemmas \ref{lemma3.2}-\ref{lemma3.4} and  \ref{lemma2.7}, we know that
the matrix $AC \otimes  BD $ is symmetric positive definite, where $A$ (or $B$, $C$,$D$) can be chosen as $I$ (or $L$, $\widetilde{L}$).
Thus using (\ref{3.9}) and (\ref{3.10}), there exists
\begin{equation}\label{3.22}
\begin{split}
||{\bf A}^{(k)}{\bf v}^{K-k+1}||^2
&\geq ||{\bf A}_1^{(k)}{\bf v}^{K-k+1}||^2
\geq \left({\bf B}^{(k)}{\bf v}^{K-k+1},{\bf v}^{K-k+1}\right)\\
&\geq \left({\bf C}^{(k)}{\bf v}^{K-k+1},{\bf v}^{K-k+1}\right),
\end{split}
\end{equation}
where
\begin{equation}\label{3.23}
\begin{split}
{\bf A}_1^{(k)}&=c_1\theta_1^2I\otimes I +c_2\theta_1\theta_2\left(I\otimes L+L\otimes I\right),\\
{\bf B}^{(k)}&=c_1^2\theta_1^4I\otimes I +2c_1c_2\theta_1^3\theta_2\left(I\otimes L+L\otimes I\right)
+c_2^2\theta_1^2\theta_2^2\left(I\otimes L^2+L^2\otimes I\right),\\
{\bf C}^{(k)}&=\eta_3\left(I\otimes L^2+L^2\otimes I\right)~~~~{\rm with}~~~~
\eta_3=\frac{c_1^2\theta_1^4}{32} +\frac{c_1c_2\theta_1^3\theta_2}{2}+c_2^2\theta_1^2\theta_2^2.
\end{split}
\end{equation}
Combining (\ref{3.13}) and (\ref{3.21})-(\ref{3.23}), we have
\begin{equation*}
\begin{split}
  ||{\bf A}^{(k)}{\bf v}^{K-k+1}||_{({\bf D}^k)^{-1}}^2
 & =\frac{1}{\eta_2} ||{\bf A}^{(k)}{\bf v}^{K-k+1}||^2\\
 & \geq \frac{\eta_3}{\eta_2} \left(\left(I\otimes L^2+L^2\otimes I\right){\bf v}^{K-k+1},{\bf v}^{K-k+1}\right)\\
 & \geq \frac{4\eta_3}{3\eta_1\eta_2}||{\bf v}^{K-k+1}-{\bf I}_{K-k}^{K-k+1}{\bf v}^{K-k}||_{{\bf A}^{(k)}}^2.
\end{split}
\end{equation*}
According to  (\ref{3.14}), (\ref{3.13}) and (\ref{3.23}), there exists
\begin{equation*}
\begin{split}
\frac{4\eta_3}{3\eta_1\eta_2}>\frac{\eta_3}{3\eta_2^2}
>\frac{\eta_3}{48\left(c_1\theta_1^2+2c_2\theta_1\theta_2 \right)^2}
>\frac{1}{1536}>0, \quad\forall k \geq 1.
\end{split}
\end{equation*}
 More concretely, from  (\ref{3.14}) we get $\frac{4\eta_3}{3\eta_1\eta_2}>\frac{\eta_3}{3\eta_2^2}$;  and using (\ref{3.13}) and (\ref{3.23}), there exists
\begin{equation*}
\begin{split}
\eta_2
=c_1\left(2\theta_1-\theta_2\right)^2+4c_2\left(2\theta_1-\theta_2\right)\theta_2
\leq c_1\left(2\theta_1\right)^2+4c_22\theta_1\theta_2=4\left(c_1\theta_1^2+2c_2\theta_1\theta_2\right),
\end{split}
\end{equation*}
and
\begin{equation*}
\begin{split}
\frac{4\eta_3}{3\eta_1\eta_2}>\frac{\eta_3}{3\eta_2^2}
>\frac{\eta_3}{48\left(c_1\theta_1^2+2c_2\theta_1\theta_2 \right)^2}
>\frac{\left(c_1\theta_1^2+2c_2\theta_1\theta_2 \right)^2}{32*48\left(c_1\theta_1^2+2c_2\theta_1\theta_2 \right)^2}=\frac{1}{1536}.
\end{split}
\end{equation*}
Hence
\begin{equation*}
   \min_{{\bf v}^{k-1} \in \mathcal{B}_{k-1} }||{\bf v}^k-{\bf I}_{k-1}^k{\bf v}^{k-1}||_{{\bf A}_k}^2
   \leq 1536 ||{\bf A}_k{\bf v}^k||_{{\bf D}_k^{-1}}^2 \quad  \forall {\bf v}^k \in \mathcal{B}_k.
\end{equation*}
The proof is completed.
\end{proof}

Following the above results, we obtain the uniform convergence of the V-cycle Multigrid method.
\begin{theorem}\label{theorem3.8}
For the algebraic system (\ref{3.1}),  it satisfies
$$||{\bf I}-{\bf B}_k{\bf A}_k||_{{\bf A}_k} \leq \frac{{\bf m}_0}{2l\omega+{\bf m}_0}<1~~{\rm with}~~~1\leq k\leq K, ~~~~\omega \in (0,1/4],$$
where  the operator ${\bf B}_k$ is defined by the  V-cycle method in  Multigrid Algorithm  \ref{MGM}  and  $l$ is the number of smoothing steps
and ${\bf m}_0$ is given in Lemma \ref{lemma3.7}.
\end{theorem}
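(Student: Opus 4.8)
The plan is to reduce Theorem~\ref{theorem3.8} to the abstract convergence framework of Theorem~\ref{theorem2.3}. That theorem was stated for an arbitrary symmetric positive definite operator, so it applies verbatim in the two-dimensional setting once $A_k$, $S_k$, $B_k$ are replaced by ${\bf A}_k$, ${\bf S}_k$, ${\bf B}_k$: it yields $\|{\bf I}-{\bf B}_k{\bf A}_k\|_{{\bf A}_k}\le {\bf m}_0/(2l\omega+{\bf m}_0)<1$ as soon as the smoother estimate~(\ref{2.10}) and the approximation property~(\ref{2.12}) hold for ${\bf A}_k$ (recall that~(\ref{2.12}) replaces the less tractable~(\ref{2.11}) via Lemma~\ref{lemma2.2} and the variational principle). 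So the proof of Theorem~\ref{theorem3.8} is essentially a matter of quoting the right ingredients in the right order.

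First I would record that ${\bf A}_h=c_1 I\otimes I + c_2(I\otimes L + L\otimes I)$ is symmetric positive definite: it is symmetric, and by Lemma~\ref{lemma3.4} its eigenvalues are $c_1+c_2\mu_i+c_2\mu_j$ with $\mu_i,\mu_j\in(0,4)$ the eigenvalues of $L={\rm tridiag}(-1,2,-1)$, hence strictly positive when $c_1\ge 0$, $c_2>0$. Consequently the inner products in~(\ref{2.9}) are well defined and, since the Galerkin coarsening ${\bf A}^{(k)}={\bf I}_k^{k-1}{\bf A}^{(k-1)}{\bf I}_{k-1}^{k}$ preserves symmetric positive definiteness, the whole hierarchy $\{{\bf A}_k\}$ to which Theorem~\ref{theorem2.3} is applied consists of symmetric positive definite matrices.

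Next, the hypothesis~(\ref{2.10}) for ${\bf A}_k$ is exactly the statement of Lemma~\ref{lemma3.6}: writing ${\bf A}^{(k)}$ in tensor form via~(\ref{3.9})--(\ref{3.11}) and combining the eigenvalue bounds for $I^{(k)}$ and $L^{(k)}$ through Lemmas~\ref{lemma3.3}--\ref{lemma3.5} gives $1\le\lambda_{\max}({\bf D}_k^{-1}{\bf A}_k)<4$, which is precisely why the admissible relaxation range in 2D shrinks to $\omega\in(0,1/4]$ compared with $(0,1/2]$ in 1D. The hypothesis~(\ref{2.12}) for ${\bf A}_k$ is the statement of Lemma~\ref{lemma3.7}: choosing the coarse vector through the cutting operator $T\otimes T$, decomposing the fine-grid error into its ``odd'' and ``even'' blocks as in~(\ref{3.17})--(\ref{3.20}), estimating each block by the one-dimensional bound~(\ref{2.17}), and then bounding $\|{\bf A}^{(k)}{\bf v}\|_{({\bf D}^k)^{-1}}^2$ from below against $\big((I\otimes L^2+L^2\otimes I){\bf v},{\bf v}\big)$ using positive definiteness of products of commuting SPD Toeplitz factors (Lemma~\ref{lemma2.7} with Lemmas~\ref{lemma3.2}--\ref{lemma3.4}), which produces the level-independent constant ${\bf m}_0=1536$.

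With~(\ref{2.10}) and~(\ref{2.12}) established for ${\bf A}_k$ by Lemmas~\ref{lemma3.6} and~\ref{lemma3.7}, Theorem~\ref{theorem2.3} applies directly and delivers the claimed estimate with ${\bf m}_0=1536$, uniformly in $1\le k\le K$ and in the mesh size. There is no real obstacle inside the proof of the theorem itself; the entire difficulty has already been absorbed into the two lemmas, and the only subtle point worth checking is that the abstract Theorem~\ref{theorem2.3} (through Lemmas~\ref{lemma2.1} and~\ref{lemma2.2}) was genuinely stated for general symmetric positive definite operators, so that nothing in its derivation relied on the tridiagonal one-dimensional structure and it transfers unchanged to the block tridiagonal case.
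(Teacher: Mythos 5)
Your proposal is correct and follows the paper's own route exactly: the paper deduces Theorem \ref{theorem3.8} by combining Lemma \ref{lemma3.6} (which gives (\ref{2.10})) and Lemma \ref{lemma3.7} (which gives (\ref{2.12}) with ${\bf m}_0=1536$) with the abstract framework of Theorem \ref{theorem2.3}. Your additional remarks on positive definiteness of the Galerkin hierarchy and the generality of the abstract framework are consistent with, and only slightly more explicit than, what the paper does.
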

\begin{remark}
Based on the above analysis,  the  convergence estimates of MGM is easy to obtain for   the two-dimensional  compact difference  scheme  $  {\bf A}_h{\bf v}^h={\bf f}_h$, where
$
{\bf A}_h=c_1H\otimes H+  c_2\left( H\otimes L+L\otimes H \right),
$
and  the matrix ${H}=\frac{1}{12}{\rm tridiag}(1, 10, 1)$.
\end{remark}
\section{The finite difference scheme for Feynman-Kac  equation}
Let $T>0$,  $\Omega=(0,b)\times (0,b)$.
Without loss of generality, we add a force term $f({\bf x},\rho,t)$ on the right hand side of (\ref{1.2})
and make it subject to the given initial and boundary conditions, which leads to
\begin{equation}\label{4.1}
\begin{split}
{^s_c}{D}_t^\alpha G({\bf x},\rho,t)
&={^s\!}D_t^\alpha [G({\bf x},\rho,t)-e^{-\rho t}G({\bf x},\rho,0)]\\
&  = \kappa_\alpha \Delta G({\bf x},\rho,t)+f({\bf x},\rho,t),  ~~~~0<t \leq T,~~{\bf x} \in \Omega
\end{split}
\end{equation}
with  the initial and boundary conditions
\begin{equation*}
\begin{split}
&G({\bf x},\rho,0)=\phi({\bf x}),~~{\bf x}\in \Omega,\\
&G({\bf x},\rho,t)=\psi(t), ~~({\bf x},t) \in \partial \Omega \times [0,T].
\end{split}
\end{equation*}

\subsection{Derivation of the compact difference  scheme for 1D}
Let the mesh points
$$\Omega_h=\{x_i=ih|0\leq i\leq M+1\}~~ {\rm and}~~ \Omega_\tau=\{t_n=n\tau|0\leq n\leq N\},$$
where $h=b/(M+1)$ and $\tau=T/N$ are the uniform space stepsize and time steplength, respectively.
Let  $\mathcal{V}=\{v_i^n| 0 \leq i \leq M+1, 0\leq n\leq N \}$ be the gird function defined on
the mesh $\Omega_h\times \Omega_\tau$. For any grid function $v_i^n \in \mathcal{V}$,  we denote
\begin{equation}\label{4.2}
  \delta_x^2v_i^n=\frac{1}{h^2}(v_{i-1}^n-2v_i^n+v_{i+1}^n),
\end{equation}
and the compact operator
\begin{equation}\label{4.3}
\mathcal{C}_hv_i^n=\left\{ \begin{array}
 {l@{\quad} l}
(1+\frac{h^2}{12}\delta_x^2)v_i^n=\frac{1}{12}(v_{i-1}^n+10v_i^n+v_{i+1}^n),~~~~1 \leq i \leq M,\\
\\
v_i^n, ~~~~i=0 ~{\rm or}~ M+1.
\end{array}
 \right.
\end{equation}
Then, we obtain the fourth-order accuracy compact operator in spatial direction; see the following lemma.
\begin{lemma}[\cite{Ji:15}]\label{lemmma4.1}
Let  $G(x)\in C^6(\Omega )$ and $\theta(s)=5(1-s)^3-3(1-s)^5$. Then
$$\mathcal{C}_h \left[\frac{\partial^2}{\partial x^2} G(x)\Big|_{x=x_i}\right]
=\delta_x^2G(x_i)+\frac{h^4}{360}\int_0^1\left[G^{(6)}(x_i-s h)+G^{(6)}(x_i+s h)\right]\theta(s)ds$$
with $x_i=ih$, $1 \leq i \leq {M}$.
\end{lemma}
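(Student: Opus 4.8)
The plan is to prove the identity directly from Taylor's theorem with integral remainder, applied once to $G$ and once to $G''$, keeping every remainder in exact integral form; this is legitimate precisely because $G\in C^6(\Omega)$, and it is the reason the statement is an exact equality rather than an asymptotic expansion.

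First I would rewrite the left-hand side via the definition $(\ref{4.3})$: for $1\le i\le M$, $\mathcal{C}_h\bigl[G''(x_i)\bigr]=\frac{1}{12}\bigl(G''(x_i-h)+10G''(x_i)+G''(x_i+h)\bigr)$, where $G''$ abbreviates $\frac{\partial^2}{\partial x^2}G$. Expanding $G''(x_i\pm h)$ by Taylor's formula with integral remainder through the $h^3$ term,
\[
G''(x_i\pm h)=G''(x_i)\pm hG'''(x_i)+\tfrac{h^2}{2}G^{(4)}(x_i)\pm\tfrac{h^3}{6}G^{(5)}(x_i)+\tfrac{h^4}{6}\int_0^1(1-s)^3G^{(6)}(x_i\pm sh)\,ds,
\]
adding the two expansions (the odd-order terms cancel) and dividing by $12$ yields
\[
\mathcal{C}_h\bigl[G''(x_i)\bigr]=G''(x_i)+\tfrac{h^2}{12}G^{(4)}(x_i)+\tfrac{h^4}{72}\int_0^1(1-s)^3\bigl[G^{(6)}(x_i+sh)+G^{(6)}(x_i-sh)\bigr]\,ds.
\]
Then I would do the same for $\delta_x^2G(x_i)=h^{-2}\bigl(G(x_i-h)-2G(x_i)+G(x_i+h)\bigr)$, this time expanding $G(x_i\pm h)$ through the $h^5$ term with integral remainder in $G^{(6)}$; once more the odd-order terms cancel, and after division by $h^2$ one obtains
\[
\delta_x^2G(x_i)=G''(x_i)+\tfrac{h^2}{12}G^{(4)}(x_i)+\tfrac{h^4}{120}\int_0^1(1-s)^5\bigl[G^{(6)}(x_i+sh)+G^{(6)}(x_i-sh)\bigr]\,ds.
\]
Subtracting, the $G''$ and $G^{(4)}$ terms drop out and
\[
\mathcal{C}_h\bigl[G''(x_i)\bigr]-\delta_x^2G(x_i)=h^4\int_0^1\Bigl(\tfrac{(1-s)^3}{72}-\tfrac{(1-s)^5}{120}\Bigr)\bigl[G^{(6)}(x_i+sh)+G^{(6)}(x_i-sh)\bigr]\,ds;
\]
the last step is the elementary identity $\frac{(1-s)^3}{72}-\frac{(1-s)^5}{120}=\frac{1}{360}\bigl(5(1-s)^3-3(1-s)^5\bigr)=\frac{\theta(s)}{360}$, which gives the claim.

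The computation is essentially routine; the only place demanding care is choosing the truncation orders of the two Taylor expansions consistently — $G''$ carried to order $h^3$ but $G$ to order $h^5$, reflecting the two derivatives absorbed by the second-difference quotient $\delta_x^2$ — so that all contributions through $h^4G^{(4)}$ cancel exactly and the surviving $O(h^4)$ kernel comes out to be precisely $\theta(s)/360$. No deeper obstacle is expected, and the hypothesis $G\in C^6(\Omega)$ is exactly what is needed to make the remainder integrals well defined.
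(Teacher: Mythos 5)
Your proof is correct: both Taylor expansions with integral remainder are stated with the right coefficients, the even-order terms cancel as claimed, and $\tfrac{(1-s)^3}{72}-\tfrac{(1-s)^5}{120}=\tfrac{\theta(s)}{360}$ gives exactly the stated kernel. The paper itself offers no proof of this lemma (it is quoted from the reference [Ji:15]), and your argument is precisely the standard derivation used there, so nothing further is needed.
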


Denote $G_{i,\rho}^n$ and $f_{i,\rho}^n$, respectively, as the numerical approximation to $G(x_i,\rho,t_n)$ and $f(x_i,\rho,t_n)$. In this paper, we restrict $U(x)=1$ appeared in (\ref{1.1}); for the discussions of the more general choices of $U(x)$, see \cite{Deng:14}.
Using  \cite{Chen:13},  we obtain the $\nu$-th order approximations for the Riemann-Liouville fractional substantial derivative, i.e.,
\begin{equation}\label{4.4}
\begin{split}
&{^s\!}D_t^\alpha G(x,\rho,t)|_{t=t_n}=\frac{1}{\tau^\alpha}\sum_{k=0}^{n}{d}_{k}^{\nu,\alpha}G(x,\rho,t_{n-k})+\mathcal{O}(\tau^\nu);\\
& {^s\!}D_t^\alpha [e^{-\rho t}G(x,\rho,0)]_{t=t_n}=\frac{1}{\tau^\alpha}\sum_{k=0}^{n}d_{k}^{\nu,\alpha}e^{-\rho (n-k)\tau}G(x,\rho,0)+\mathcal{O}(\tau^\nu)
\end{split}
\end{equation}
with
\begin{equation}\label{4.5}
d_{k}^{\nu,\alpha}=e^{-\rho  k \tau}{l}_k^{\nu,\alpha},~~\nu=1,2,3,4,
\end{equation}
where ${l}_k^{1,\alpha}$, ${l}_k^{2,\alpha}$, ${l}_k^{3,\alpha}$ and ${l}_k^{4,\alpha}$
are given in \cite{Chen:0013,Chen:1313}.
In particular, when $\nu=1$, there exists
\begin{equation}\label{4.6}
  d_{k}^{1,\alpha}=e^{-\rho k \tau}{l}_k^{1,\alpha},~~ {l}_k^{1,\alpha}=(-1)^k\left ( \begin{matrix} \alpha \\ k\end{matrix} \right ).
\end{equation}
From  (\ref{4.4}) and (\ref{4.5}), there exists $\nu$-th order approximations for Caputo fractional substantial derivative
\begin{equation}\label{4.7}
\begin{split}
&{^s_c}{D}_t^\alpha G(x_i,\rho,t)|_{t=t_n}\\
&=\frac{1}{\tau^\alpha}\sum_{k=0}^{n}{d}_{k}^{\nu,\alpha}\left[G(x_i,\rho,t_{n-k})-e^{-\rho (n-k)\tau}G(x_i,\rho,0)\right]+r_i^n
\end{split}
\end{equation}
with $|r_i^n|=\mathcal{O}(\tau^\nu)$, $\nu=1,2,3,4$.

Performing both sides of (\ref{4.1}) by $\mathcal{C}_h$ at the point $(x_i,t_n)$ results in
\begin{equation}\label{4.8}
\begin{split}
\mathcal{C}_h\left[{^s_c}{D}_t^\alpha G(x_i,\rho,t_n)\right]
  = \kappa_\alpha \mathcal{C}_h\left[\frac{\partial^2}{\partial x^2} G(x_i,\rho,t_n)\right]+\mathcal{C}_h\left[f(x_i,\rho,t_n)\right].
\end{split}
\end{equation}
According to (\ref{4.8}), (\ref{4.7}) and Lemma \ref{lemmma4.1}, Eq. (\ref{4.1}) can be rewritten as
\begin{equation}\label{4.9}
\begin{split}
&\mathcal{C}_h\left[\frac{1}{\tau^\alpha}\sum_{k=0}^{n}{d}_{k}^{\nu,\alpha}\left[G(x_i,\rho,t_{n-k})-e^{-\rho (n-k)\tau}G(x_i,\rho,0)\right]\right]\\
& = \kappa_\alpha \delta_x^2 G(x_i,\rho,t_n)+\mathcal{C}_h\left[f(x_i,\rho,t_n)\right]+\widetilde{r}_i^n
\end{split}
\end{equation}
with the local truncation error
\begin{equation*}
\begin{split}\widetilde{r}_i^n
&=\kappa_\alpha\frac{h^4}{360}\int_0^1\left[G^{(6)}(x_i-s h)+G^{(6)}(x_i+s h)\right]\theta(s)ds-\mathcal{C}_hr_i^n\\
&=\mathcal{O}(\tau^\nu+h^4),~ \nu=1,2,3,4,
\end{split}
\end{equation*}
i.e.,
\begin{equation}\label{4.10}
\begin{split}
|\widetilde{r}_i^n|\leq C_G  (\tau^\nu+h^4),~ \nu=1,2,3,4,
\end{split}
\end{equation}
where $C_G$ is a constant independent of $\tau$ and $h$.

Multiplying  (\ref{4.9}) by $\tau^\alpha$ leads to
\begin{equation}\label{4.11}
\begin{split}
&\mathcal{C}_h\left[\sum_{k=0}^{n}{d}_{k}^{\nu,\alpha}\left[G(x_i,\rho,t_{n-k})-e^{-\rho (n-k)\tau}G(x_i,\rho,0)\right]\right]\\
& = \kappa_\alpha \tau^\alpha\delta_x^2 G(x_i,\rho,t_n)+\tau^\alpha\mathcal{C}_h\left[f(x_i,\rho,t_n)\right]+R_i^n
\end{split}
\end{equation}
with
\begin{equation}\label{4.12}
\begin{split}
|R_i^n|=|\tau^\alpha \widetilde{r}_i^n|\leq C_G \tau^\alpha (\tau^\nu+h^4),~ \nu=1,2,3,4,
\end{split}
\end{equation}
where $C_G$ is given in (\ref{4.10}).

Using  (\ref{4.11}) and (\ref{4.5}) leads to the compact difference scheme  of (\ref{4.1}) as
\begin{equation}\label{4.13}
\begin{split}
&{l}_{0}^{\nu,\alpha}\frac{G_{i-1,\rho}^{n}+10G_{i,\rho}^{n}+G_{i+1,\rho}^{n}}{12}
+\mu_{h,\tau}^\alpha\left(- G_{i-1,\rho}^{n}+2G_{i,\rho}^{n}-G_{i+1,\rho}^{n}\right)\\
&=-\sum_{k=1}^{n-1}e^{-\rho  k \tau}{l}_k^{\nu,\alpha}
\frac{G_{i-1,\rho}^{n-k}+10G_{i,\rho}^{n-k}+G_{i+1,\rho}^{n-k}}{12}\\
& \quad                  +\sum_{k=0}^{n-1}e^{-\rho  n \tau}{l}_k^{\nu,\alpha}
\frac{G_{i-1,\rho}^{0}+10G_{i,\rho}^{0}+G_{i+1,\rho}^{0}}{12}\\
    & \quad                 +\tau^\alpha  \frac{f_{i-1,\rho}^{n}+10f_{i,\rho}^{n}+f_{i+1,\rho}^{n}}{12}
\end{split}
\end{equation}
with $\mu_{h,\tau}^\alpha =\kappa_\alpha \frac{\tau^\alpha}{h^2}$.
For the convenience of implementation, we use the matrix form of the grid functions
 \begin{equation*}
G^{n}=[G_{1,\rho}^n,G_{2,\rho}^n,\ldots,G_{M,\rho}^n]^{\rm T}
 ~~~~~~{\rm and}~~
F^{n}=[f_{1,\rho}^n,f_{2,\rho}^n,\ldots,f_{M,\rho}^n]^{\rm T}.
  \end{equation*}
Thus the compact difference scheme (\ref{4.13}) reduces to the following form
\begin{equation}\label{4.14}
\begin{split}
&{l}_{0}^{\nu,\alpha}{H}G^{n}+\mu_{h,\tau}^\alpha LG^{n}\\
&=-\sum_{k=1}^{n-1}e^{-\rho  k \tau}{l}_k^{\nu,\alpha}{H}G^{n-k}
               +\sum_{k=0}^{n-1}e^{-\rho  n \tau}{l}_k^{\nu,\alpha}{H}G^{0}
               +\tau^\alpha{H}F^{n}+\widetilde{F}^n.
\end{split}
\end{equation}
Here, the matrices ${H}=\frac{1}{12}{\rm tridiag}(1, 10, 1)$ and $L={\rm tridiag}(-1, 2, -1)$, i.e.,
\begin{equation}\label{4.15}
\begin{split}
{H}=\frac{1}{12}\left [ \begin{matrix}
10       &   1         &             &           \\
1        &   10        & 1           &            \\
         &\ddots       &\ddots       &\ddots       \\
         &             &1            &10
 \end{matrix}
 \right ]
 ~~~~{\rm and}~~~~
 L=\left [ \begin{matrix}
2        &   -1       &             &             \\
-1       &   2        & -1          &              \\
         &\ddots      &\ddots       &\ddots         \\
         &            &-1           &2
 \end{matrix}
 \right ],
\end{split}
\end{equation}
and $\widetilde{F}^n=[\widetilde{f}_{1,\rho}^n,0,\ldots,0,\widetilde{f}_{M,\rho}^n]^{\rm T}$
with the initial and boundary conditions
\begin{equation*}
\begin{split}
\widetilde{f}_{1,\rho}^n
=&\mu_{h,\tau}^\alpha G_{0,\rho}^n+\frac{1}{12}\Big[-l_{0}^{\nu,\alpha}G_{0,\rho}^n\\
&-\sum_{k=1}^{n-1}e^{-\rho  k \tau}{l}_k^{\nu,\alpha}G_{0,\rho}^{n-k}
+\sum_{k=0}^{n-1}e^{-\rho  n \tau}{l}_k^{\nu,\alpha}G_{0,\rho}^{0}+\tau^\alpha f_{0,\rho}^{n}\Big];
\end{split}
\end{equation*}
and
\begin{equation*}
\begin{split}
\widetilde{f}_{M,\rho}^n
=&\mu_{h,\tau}^\alpha G_{M+1,\rho}^n+\frac{1}{12}\Big[-l_{0}^{\nu,\alpha}G_{M+1,\rho}^n\\
&-\sum_{k=1}^{n-1}e^{-\rho  k \tau}{l}_k^{\nu,\alpha}G_{M+1,\rho}^{n-k}
+\sum_{k=0}^{n-1}e^{-\rho  n \tau}{l}_k^{\nu,\alpha}G_{M+1,\rho}^{0}+\tau^\alpha f_{M+1,\rho}^{n}\Big].
\end{split}
\end{equation*}

\subsection{Derivation of the center difference  scheme for 2D}
Let the mesh points $x_i=ih$, $y_j=jh$,
$t_n=n\tau$ with $0\leq i,j\leq M+1$, $0\leq n \leq {N}$, where $h=b/(M+1)$ and $\tau=T/N$ are the uniform space stepsize and time steplength, respectively.
Denote $G_{i,j,\rho}^n$ and $f_{i,j,\rho}^n$, respectively, as the numerical approximation to $G(x_i,y_j,\rho,t_n)$ and $f(x_i,y_j,\rho,t_n)$.
To approximate (\ref{1.2}), we utilize the second order central difference formula for the spatial derivative.
According to (\ref{4.7}) and (\ref{4.2}), then (\ref{1.2}) can be recast as
\begin{equation}\label{4.16}
\begin{split}
&\frac{1}{\tau^\alpha}\sum_{k=0}^{n}{d}_{k}^{\nu,\alpha}\left[G(x_i,y_j,\rho,t_{n-k})-e^{-\rho (n-k)\tau}G(x_i,y_j,\rho,0)\right]\\
&=\kappa_\alpha \left(\delta_x^2 G(x_i,y_j,\rho,t_n)+\delta_y^2 G(x_i,y_j,\rho,t_n)\right)
+f(x_i,y_j,\rho,t_{n})+\overline{r}_i^n,
\end{split}
\end{equation}
with the local truncation error $\overline{r}_i^n=\mathcal{O}(\tau^\nu+h^2),~ \nu=1,2,3,4$.
Then, the resulting discretization of (\ref{4.16}) has the following form
\begin{equation}\label{4.17}
\begin{split}
&{l}_{0}^{\nu,\alpha}G_{i,j,\rho}^{n}
+\mu_{h,\tau}^\alpha\left(- G_{i,j-1,\rho}^{n}- G_{i-1,j,\rho}^{n}+4G_{i,j,\rho}^{n}-G_{i+1,j,\rho}^{n}-G_{i,j+1,\rho}^{n}\right)\\
&=-\sum_{k=1}^{n-1}e^{-\rho  k \tau}{l}_k^{\nu,\alpha}
G_{i,j,\rho}^{n-k}
 +\sum_{k=0}^{n-1}e^{-\rho  n \tau}{l}_k^{\nu,\alpha}
G_{i,j,\rho}^{0}
 +\tau^\alpha f_{i,j,\rho}^{n}
\end{split}
\end{equation}
with $\mu_{h,\tau}^\alpha =\kappa_\alpha \frac{\tau^\alpha}{h^2}$.
Denote the  grid functions
 \begin{equation*}
{\bf G}^{n}=[{\bf G}_{1}^n,{\bf G}_{2}^n,\ldots,{\bf G}_{M}^n]^{\rm T}
 ~~{\rm and}~~
{\bf f}^{n}=[{\bf f}_{1}^n,{\bf f}_{2}^n,\ldots,{\bf f}_{M}^n]^{\rm T},
  \end{equation*}
where
 \begin{equation*}
{\bf G}_i^{n}=[{\bf G}_{i,1,\rho}^n,{\bf G}_{i,2,\rho}^n,\ldots,{\bf G}_{i,M,\rho}^n]^{\rm T}
 ~~{\rm and}~~
{\bf f}_i^{n}=[{\bf f}_{i,1,\rho}^n,{\bf f}_{i,2,\rho}^n,\ldots,{\bf f}_{i,M,\rho}^n]^{\rm T}.
  \end{equation*}
For simplicity,  the zero boundary conditions are used.
Thus  (\ref{4.17})  reduces to
\begin{equation}\label{4.18}
\begin{split}
&\left[l_{0}^{\nu,\alpha}I\otimes I+ \mu_{h,\tau}^\alpha \left( I\otimes L+L\otimes I\right)\right]
{\bf G}^{n}\\
&=-\sum_{k=1}^{n-1}e^{-\rho  k \tau}{l}_k^{\nu,\alpha}{\bf G}^{n-k}
               +\sum_{k=0}^{n-1}e^{-\rho  n \tau}{l}_k^{\nu,\alpha}{\bf G}^{0}
               +\tau^\alpha {\bf f}^{n}.
\end{split}
\end{equation}

\section{Applications of MGM}
To align the solution of the resulting algebraic  system (\ref{4.14})
with the   Multigrid Algorithm \ref{MGM}, we assume that the  $A_h=l_{0}^{\nu,\alpha}{H}+\mu_{h,\tau}^\alpha L$, $\nu^h=G^n$ and $$f_h=-\sum_{k=1}^{n-1}e^{-\rho  k \tau}{l}_k^{\nu,\alpha}{H}G^{n-k}
               +\sum_{k=0}^{n-1}e^{-\rho  n \tau}{l}_k^{\nu,\alpha}{H}G^{0}
               +\tau^\alpha{H}F^{n}+\widetilde{F}^n.$$
Then the resulting algebraic system (\ref{4.14}) reduces to the form of (\ref{2.1}), i.e.,
\begin{equation}\label{5.1}
  A_h\nu^h=f_h~~~{\rm with}~~~A_h=l_{0}^{\nu,\alpha}{H}+\mu_{h,\tau}^\alpha L.
\end{equation}

\begin{lemma}\label{lemma5.1}
Let $A^{(1)}:=A_h$ be defined by
(\ref{5.1}) and $A^{(k)}=I_k^{k-1}A^{(k-1)}I_{k-1}^{k}$. Then
\begin{equation*}
\frac{\omega}{\lambda_{\max}(A_k) }(\nu^k,\nu^k) \leq (S_k\nu^k,\nu^k)\leq (A_k^{-1}\nu^k,\nu^k),
~~~\forall \nu^k\in \mathcal{M}_k,
\end{equation*}
where $A_k=A^{(K-k+1)}$, $S_k=\omega D_k^{-1}$, $\omega \in (0,1/2]$ and $D_k$ is the diagonal of $A_k$.
\end{lemma}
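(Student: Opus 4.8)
The plan is to reduce the statement to Lemma~\ref{lemma2.6}, since the matrix $A_h$ in \eqref{5.1} is again a symmetric tridiagonal Toeplitz matrix of the form \eqref{2.1}. First I would write $A_h=l_{0}^{\nu,\alpha}H+\mu_{h,\tau}^\alpha L$ out entrywise: because $H=\frac{1}{12}{\rm tridiag}(1,10,1)$ and $L={\rm tridiag}(-1,2,-1)$, one gets $A_h={\rm tridiag}(a_1,a_0,a_1)$ with
\[
a_0=\tfrac{10}{12}\,l_{0}^{\nu,\alpha}+2\mu_{h,\tau}^\alpha,\qquad
a_1=\tfrac{1}{12}\,l_{0}^{\nu,\alpha}-\mu_{h,\tau}^\alpha .
\]

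Next I would check that this pair $(a_0,a_1)$ meets the two structural conditions required in \eqref{2.1}, namely $a_0>0$ and $a_0\ge 2|a_1|$. Recalling that $l_{0}^{\nu,\alpha}>0$ and $\mu_{h,\tau}^\alpha=\kappa_\alpha\tau^\alpha/h^2>0$, positivity of $a_0$ is immediate. For the diagonal dominance I would split into the cases $\tfrac{1}{12}l_{0}^{\nu,\alpha}\ge\mu_{h,\tau}^\alpha$ and $\tfrac{1}{12}l_{0}^{\nu,\alpha}<\mu_{h,\tau}^\alpha$: in the first, $a_0-2|a_1|=\tfrac{8}{12}l_{0}^{\nu,\alpha}+4\mu_{h,\tau}^\alpha>0$, and in the second, $a_0-2|a_1|=l_{0}^{\nu,\alpha}>0$. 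Hence $a_0>2|a_1|$ and $A_h$ is a strictly diagonally dominant symmetric positive definite tridiagonal Toeplitz matrix falling exactly under the hypotheses of \eqref{2.1}.

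With this reduction in place, the coarse operators $A^{(k)}=I_k^{k-1}A^{(k-1)}I_{k-1}^{k}$ are precisely those described by Corollary~\ref{corollary2.5} and the splitting \eqref{2.14}, so the two desired inequalities are an immediate instance of Lemma~\ref{lemma2.6}: the left one holds because $D_k$ is diagonal with $\lambda_{\max}(D_k)=(A_k)_{1,1}\le\lambda_{\max}(A_k)$ (Rayleigh quotient with $e_1$), and the right one is equivalent to $\omega\,\lambda_{\max}(D_k^{-1}A_k)\le 1$, which holds since $\omega\le 1/2$ and $\lambda_{\max}(D_k^{-1}A_k)<2$ by the Gershgorin/Rayleigh argument already carried out in the proof of Lemma~\ref{lemma2.6}.

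I would expect the only non-cosmetic point to be the verification of $a_0\ge 2|a_1|$, which rests on the positivity of the leading weight $l_{0}^{\nu,\alpha}$ of the discretization of the substantial derivative; for $\nu=1$ this is trivial since $l_{0}^{1,\alpha}=1$, and for $\nu=2,3,4$ it follows from the explicit form of the coefficients given in \cite{Chen:0013,Chen:1313}. Everything else is a direct application of the one-dimensional results already proved, so no new estimate is needed.
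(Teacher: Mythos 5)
Your proposal is correct and follows essentially the same route as the paper: identify $A_h=l_{0}^{\nu,\alpha}H+\mu_{h,\tau}^\alpha L$ as a tridiagonal Toeplitz matrix of the form (\ref{2.1}) and invoke Lemma \ref{lemma2.6}. The only difference is that you spell out the check $a_0>2|a_1|$ (using $l_{0}^{\nu,\alpha}>0$ and $\mu_{h,\tau}^\alpha>0$), which the paper leaves implicit, and that verification is accurate.
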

\begin{proof}
According to Lemma \ref{lemma2.6} and  $A_h=l_{0}^{\nu,\alpha}{H}+\mu_{h,\tau}^\alpha L$ in (\ref{4.14}), the desired result is obtained.
\end{proof}

\begin{lemma}\label{lemma5.2}
Let $A^{(1)}:=A_h$ be defined by
(\ref{5.1}) and $A^{(k)}=I_k^{k-1}A^{(k-1)}I_{k-1}^{k}$. Then
\begin{equation*}
   \min_{\nu^{k-1} \in \mathcal{B}_{k-1} }||\nu^k-I_{k-1}^k\nu^{k-1}||_{A_k}^2\leq 16 ||A_k\nu^k||_{D_k^{-1}}^2 \quad  \forall \nu^k \in \mathcal{B}_k
\end{equation*}
with $A_k=A^{(K-k+1)}$.
\end{lemma}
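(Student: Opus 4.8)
The plan is to reduce Lemma \ref{lemma5.2} to Lemma \ref{lemma2.8} together with Corollary \ref{corollary2.5}. The first step is to bring the stiffness matrix into tridiagonal Toeplitz form: since $H=\frac1{12}{\rm tridiag}(1,10,1)=I-\frac1{12}L$ with $L={\rm tridiag}(-1,2,-1)$, the matrix $A_h$ in (\ref{5.1}) is the symmetric tridiagonal Toeplitz matrix ${\rm tridiag}(a_1,a_0,a_1)$ with $a_0=\tfrac56 l_{0}^{\nu,\alpha}+2\mu_{h,\tau}^\alpha$ and $a_1=\tfrac1{12}l_{0}^{\nu,\alpha}-\mu_{h,\tau}^\alpha$. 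Using $l_{0}^{\nu,\alpha}>0$ and $\mu_{h,\tau}^\alpha>0$ one has $a_0>0$, $a_0+2a_1=l_{0}^{\nu,\alpha}>0$ and $a_0-2a_1=\tfrac23 l_{0}^{\nu,\alpha}+4\mu_{h,\tau}^\alpha>0$, hence $a_0\ge 2|a_1|$; so $A_h$ satisfies the hypotheses of (\ref{2.1}), it is symmetric positive definite, and, by Corollary \ref{corollary2.5}, every Galerkin-coarsened matrix $A^{(k)}$ is again tridiagonal symmetric Toeplitz. Therefore Lemma \ref{lemma2.8} is applicable at each level $k$.

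The decisive point is that the compact operator $H$ is strongly diagonally dominant, which pins the ratio $a_1/a_0$ well below $1/4$: indeed $a_1<\tfrac1{12}l_{0}^{\nu,\alpha}$ while $a_0>\tfrac56 l_{0}^{\nu,\alpha}=10\cdot\tfrac1{12}l_{0}^{\nu,\alpha}$, so $a_1\le a_0/10$ (and $a_1\le 0\le a_0/10$ in the remaining case). If $a_1\le 0$, the corresponding branch of Lemma \ref{lemma2.8} applies immediately and gives $m_0=16$. If $a_1>0$, I would re-enter the proof of Lemma \ref{lemma2.8} at the level-wise ratio $\mu_2^{(k)}/\mu_1^{(k)}$ of the splitting (\ref{2.14}), for which (\ref{2.20})--(\ref{2.22}) give explicit formulas: since $a_1\le a_0/10<a_0/4$, the numerator $-2^{k}a_0+4\cdot2^{k}a_1$ is negative, so $\mu_2^{(k)}/\mu_1^{(k)}<3$ for all $k\ge 2$, while for $k=1$ one has $\mu_2^{(1)}/\mu_1^{(1)}=(a_0+2a_1)/(a_0-2a_1)\le\tfrac32<3$ (again by $a_1\le a_0/10$). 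Hence $\widetilde m_0=\max_k\mu_2^{(k)}/\mu_1^{(k)}<3$, and the argument of Lemma \ref{lemma2.8} yields $m_0=(1+\widetilde m_0)^2<16$. In both cases $m_0\le 16$, so Lemma \ref{lemma2.8} delivers
\begin{equation*}
\min_{\nu^{k-1}\in\mathcal B_{k-1}}\|\nu^k-I_{k-1}^k\nu^{k-1}\|_{A_k}^2\le 16\,\|A_k\nu^k\|_{D_k^{-1}}^2\qquad\forall\, \nu^k\in\mathcal B_k .
\end{equation*}

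I expect the only genuine obstacle to be the sub-case $a_1>0$. A black-box use of Lemma \ref{lemma2.8} there only returns the generic constant $\max\{25,\,4a_0^2/(a_0-2a_1)^2\}$, which equals $25$ for this matrix and is thus too weak for the claimed bound $16$; to sharpen $25$ to $16$ one must go inside the proof of Lemma \ref{lemma2.8} and use the extra inequality $a_1\le a_0/10$ to improve the level-independent estimate $\mu_2^{(k)}/\mu_1^{(k)}\le 4$ to $\mu_2^{(k)}/\mu_1^{(k)}<3$. The remaining ingredients --- the identity $H=I-\frac1{12}L$, the positive-definiteness check, and (for the accompanying V-cycle estimate) combining this lemma with Lemma \ref{lemma5.1} via Theorem \ref{theorem2.3} --- are routine.
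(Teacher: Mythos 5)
Your proposal is correct and follows essentially the same route as the paper: both reduce to the quantity $\widetilde m_0=\mu_2/\mu_1$ inside Lemma \ref{lemma2.8} and show it stays below $3$ for the specific coefficients $a_0=\tfrac{10}{12}l_0^{\nu,\alpha}+2\mu_{h,\tau}^\alpha$, $a_1=\tfrac1{12}l_0^{\nu,\alpha}-\mu_{h,\tau}^\alpha$, so that $m_0=(1+\widetilde m_0)^2\le 16$. The paper achieves this in a single substitution valid for all $k\ge 1$ (using $a_0+2a_1=l_0^{\nu,\alpha}$, which makes the ratio $\frac{(6C_k+2^{k-1})l_0^{\nu,\alpha}}{(2C_k+\frac23\cdot 2^{k-1})l_0^{\nu,\alpha}+2^{k+1}\mu_{h,\tau}^\alpha}<3$), whereas you reach the same bound via the case split $a_1\le 0$ versus $0<a_1<a_0/4$ with a separate $k=1$ check — a cosmetic difference only.
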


\begin{proof}
Since   $A_h=l_{0}^{\nu,\alpha}{H}+\mu_{h,\tau}^\alpha L$ in (\ref{4.14}),  i.e.,
$$a_0=\frac{10}{12}{l}_{0}^{\nu,\alpha}+2\mu_{h,\tau}^\alpha ~~{\rm and}~~
a_1=\frac{1}{12}{l}_{0}^{\nu,\alpha}-\mu_{h,\tau}^\alpha. $$
Combining Lemma  \ref{lemma2.8} and  that $\forall k\geq 1$, there exists
\begin{equation*}
\begin{split}
\frac{\left(6C_k+2^{k-1}\right)\left(a_0+2a_1\right)}{\left(2C_k+2^{k-1}\right)\left(a_0+2a_1\right)-2^{k+1}a_1}
=\frac{\left(6C_k+2^{k-1}\right)l_{0}^{\nu,\alpha}}{\left(2C_k+\frac{2}{3}\cdot2^{k-1}\right)l_{0}^{\nu,\alpha}+2^{k+1}\mu_{h,\tau}^\alpha}
<3,
\end{split}
\end{equation*}
leads to the desired result.
\end{proof}

From Lemmas \ref{lemma5.1},  \ref{lemma5.2} and Theorem \ref{theorem2.9}, our MGM convergence result is obtained.
\begin{theorem}\label{theorem5.3}
For the resulting algebraic system (\ref{4.14}),  it satisfies
$$||I-B_kA_k||_{A_k} \leq \frac{16}{2l\omega+16}<1~~{\rm with}~~~1\leq k\leq K, ~~~~\omega \in (0,1/2],$$
where  the operator $B_k$ is defined by the  V-cycle method in  Multigrid Algorithm  \ref{MGM}  and  $l$ is the number of smoothing steps.
\end{theorem}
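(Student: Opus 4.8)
\noindent\textit{Proof plan.} The plan is to reduce Theorem~\ref{theorem5.3} to the one-dimensional result Theorem~\ref{theorem2.9}: after the reduction (\ref{5.1}), the coefficient matrix is again a symmetric positive definite tridiagonal Toeplitz matrix of the type (\ref{2.1}), so it suffices to verify the two structural hypotheses (\ref{2.10}) and (\ref{2.12}) for the specific entries produced by the compact scheme and to track the resulting constant $m_0$.

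First I would write $A_h=l_{0}^{\nu,\alpha}{H}+\mu_{h,\tau}^\alpha L=\mathrm{tridiag}(a_1,a_0,a_1)$ with
\[
a_0=\tfrac{10}{12}l_{0}^{\nu,\alpha}+2\mu_{h,\tau}^\alpha,\qquad a_1=\tfrac{1}{12}l_{0}^{\nu,\alpha}-\mu_{h,\tau}^\alpha .
\]
Since $\mu_{h,\tau}^\alpha=\kappa_\alpha\tau^\alpha/h^2>0$ and $l_{0}^{\nu,\alpha}>0$ for $\nu=1,2,3,4$ (for $\nu=1$ this is $l_{0}^{1,\alpha}=1$ by (\ref{4.6}); for $\nu=2,3,4$ it is part of the properties of the weights recorded in \cite{Chen:0013,Chen:1313}), one obtains $a_0+2a_1=l_{0}^{\nu,\alpha}>0$ and $a_0-2a_1=\tfrac23 l_{0}^{\nu,\alpha}+4\mu_{h,\tau}^\alpha>0$, hence $a_0>0$ and $a_0>2|a_1|$. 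Thus (\ref{5.1}) falls into the framework (\ref{2.1}) and $A_h$ is symmetric positive definite. Hypothesis (\ref{2.10}) is then Lemma~\ref{lemma5.1}, which is immediate from Lemma~\ref{lemma2.6}: the Gershgorin/Rayleigh argument there uses only that $A^{(1)}$ is symmetric positive definite tridiagonal Toeplitz and yields $1\le\lambda_{\max}(D_k^{-1}A_k)<2$, which is equivalent to (\ref{2.10}) for every $\omega\in(0,1/2]$.

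The substantive step is hypothesis (\ref{2.12}), i.e.\ Lemma~\ref{lemma5.2}. Here I would substitute $a_0+2a_1=l_{0}^{\nu,\alpha}$ and $-2^{k+1}a_1=-\tfrac{2^{k+1}}{12}l_{0}^{\nu,\alpha}+2^{k+1}\mu_{h,\tau}^\alpha$ into the ratio $\mu_2/\mu_1$ of (\ref{2.20})--(\ref{2.23}) in Lemma~\ref{lemma2.8}. Using $2^{k-1}-\tfrac{2^{k+1}}{12}=\tfrac23 2^{k-1}$, the ratio becomes
\[
\frac{(6C_k+2^{k-1})\,l_{0}^{\nu,\alpha}}{\bigl(2C_k+\tfrac23 2^{k-1}\bigr)l_{0}^{\nu,\alpha}+2^{k+1}\mu_{h,\tau}^\alpha},
\]
which, after dropping the positive denominator term $2^{k+1}\mu_{h,\tau}^\alpha$ and then using $6C_k+2^{k-1}<6C_k+2\cdot2^{k-1}=3\bigl(2C_k+\tfrac23 2^{k-1}\bigr)$, is strictly less than $3$ for every $k\ge1$; the case $k=1$ (where $C_1=0$) is included and in fact gives a ratio below $\tfrac32$. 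Hence $\widetilde m_0<3$, so $m_0=(1+\widetilde m_0)^2<16$, and since $t\mapsto t/(2l\omega+t)$ is increasing we may safely take $m_0=16$ in the estimate. Feeding (\ref{2.10}) and (\ref{2.12}) with $m_0=16$ into Theorem~\ref{theorem2.9} (equivalently Theorem~\ref{theorem2.3} via Lemma~\ref{lemma2.2}) then yields $\|I-B_kA_k\|_{A_k}\le\frac{16}{2l\omega+16}<1$ for $1\le k\le K$ and $\omega\in(0,1/2]$, which is the claim. The only point requiring care beyond routine substitution is the positivity $l_{0}^{\nu,\alpha}>0$ for $\nu=2,3,4$, on which the whole reduction rests; everything else is a direct specialization of the one-dimensional machinery.
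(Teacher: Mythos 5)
Your proposal is correct and follows essentially the same route as the paper: it reduces (\ref{4.14}) to the 1D framework via (\ref{5.1}), verifies (\ref{2.10}) through Lemma \ref{lemma2.6}/\ref{lemma5.1}, and substitutes $a_0=\tfrac{10}{12}l_0^{\nu,\alpha}+2\mu_{h,\tau}^\alpha$, $a_1=\tfrac{1}{12}l_0^{\nu,\alpha}-\mu_{h,\tau}^\alpha$ into the ratio of Lemma \ref{lemma2.8} to show it is below $3$, giving $m_0=16$ and the bound via Theorem \ref{theorem2.9}. The added remarks on $l_0^{\nu,\alpha}>0$ and the monotonicity of $t\mapsto t/(2l\omega+t)$ are careful touches but do not change the argument, which matches Lemmas \ref{lemma5.1}--\ref{lemma5.2} of the paper.
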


According to Theorem \ref{theorem3.8}, for the two-dimensional fractional Feynman-Kac equation, we have the following results.
\begin{theorem}\label{theorem5.4}
For the resulting algebraic system (\ref{4.18}),  it satisfies
$$||{\bf I}-{\bf B}_k{\bf A}_k||_{{\bf A}_k} \leq \frac{1536}{2l\omega+1536}<1~~{\rm with}~~~1\leq k\leq K, ~~~~\omega \in (0,1/4],$$
where  the operator ${\bf B}_k$ is defined by the  V-cycle method in  Multigrid Algorithm  \ref{MGM}  and  $l$ is the number of smoothing steps.
\end{theorem}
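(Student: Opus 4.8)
The plan is to identify the resulting algebraic system (\ref{4.18}) with the model system (\ref{3.1}) of Section \ref{sec3} and then simply invoke Theorem \ref{theorem3.8}. Concretely, the coefficient matrix of (\ref{4.18}) is
\begin{equation*}
{\bf A}_h = l_{0}^{\nu,\alpha}\, I\otimes I + \mu_{h,\tau}^\alpha\left(I\otimes L + L\otimes I\right),
\end{equation*}
which is exactly of the form ${\bf A}_h = c_1\, I\otimes I + c_2\left(I\otimes L + L\otimes I\right)$ with $c_1 = l_{0}^{\nu,\alpha}$ and $c_2 = \mu_{h,\tau}^\alpha = \kappa_\alpha\tau^\alpha/h^2$. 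So the first step is to check the hypotheses $c_1\geq 0$ and $c_2>0$: since $\kappa_\alpha>0$ and $\tau,h>0$ we have $c_2>0$, and one must recall (from the construction of the coefficients in \cite{Chen:0013,Chen:1313}) that $l_{0}^{\nu,\alpha}>0$ for $\nu=1,2,3,4$ (e.g. $l_0^{1,\alpha}=1$), so $c_1>0\geq 0$. Hence ${\bf A}_h$ is symmetric positive definite of the required Toeplitz block tridiagonal type.

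Once the structural identification is made, Lemmas \ref{lemma3.6} and \ref{lemma3.7} apply verbatim to ${\bf A}^{(1)}:={\bf A}_h$ and its Galerkin-coarsened descendants ${\bf A}^{(k)}={\bf I}_k^{k-1}{\bf A}^{(k-1)}{\bf I}_{k-1}^{k}$: the smoothing property (\ref{2.10}) holds for the damped Jacobi smoother with $\omega\in(0,1/4]$, and the approximation property (\ref{2.12}) holds with the level-independent constant ${\bf m}_0=1536$. Then Theorem \ref{theorem3.8} (which is itself Theorem \ref{theorem2.3} specialized to the 2D setting, i.e. the Bramble--Pasciak/Xu framework via Lemmas \ref{lemma2.1} and \ref{lemma2.2}) yields
\begin{equation*}
\|{\bf I}-{\bf B}_k{\bf A}_k\|_{{\bf A}_k}\leq \frac{1536}{2l\omega+1536}<1,\qquad 1\leq k\leq K,
\end{equation*}
which is precisely the claimed bound. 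So the proof is essentially a one-line deduction: \emph{apply Theorem \ref{theorem3.8} with $c_1=l_0^{\nu,\alpha}$ and $c_2=\mu_{h,\tau}^\alpha$.}

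The only genuine content — and hence the "main obstacle," though it is a mild one — is verifying that the discrete fractional coefficient $l_{0}^{\nu,\alpha}$ is strictly positive (or at least nonnegative) for each admissible order $\nu$, since this is what licenses the identification $c_1\geq 0$. For $\nu=1$ this is immediate from (\ref{4.6}), where $l_0^{1,\alpha}=(-1)^0\binom{\alpha}{0}=1$; for $\nu=2,3,4$ one must quote the explicit formulas from \cite{Chen:0013,Chen:1313} and observe the leading coefficient is positive. Everything else is purely a matter of matching notation between (\ref{4.18}) and (\ref{3.1}) and then citing the already-proved Theorem \ref{theorem3.8}; no new estimates are needed, and in particular the constant $1536$ is inherited unchanged because it depends only on the block tridiagonal structure and not on the particular values of $c_1,c_2$.
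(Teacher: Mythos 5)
Your proposal is correct and follows essentially the same route as the paper, which proves Theorem \ref{theorem5.4} simply by noting that the coefficient matrix of (\ref{4.18}) has the form $c_1 I\otimes I + c_2(I\otimes L + L\otimes I)$ with $c_1=l_0^{\nu,\alpha}$, $c_2=\mu_{h,\tau}^\alpha$ and invoking Theorem \ref{theorem3.8}. Your additional remark that $l_0^{\nu,\alpha}>0$ must be checked (immediate for $\nu=1$, and from the formulas in \cite{Chen:0013,Chen:1313} for $\nu=2,3,4$) is a sensible detail the paper leaves implicit, but it does not change the argument.
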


\section{Numerical Results}
We employ the V-cycle MGM  described in Algorithm  \ref{MGM} to solve the resulting system.
The stopping criterion is taken as
$$\frac{||r^{(i)||}}{||r^{(0)}||}<10^{-11}~~ {\rm for }~~(\ref{4.14}),~~~~~~~~  \frac{||r^{(i)||}}{||r^{(0)}||}<10^{-7}~~ {\rm for }~~(\ref{4.18}),$$
where $r^{(i)}$ is the residual vector after $i$ iterations;
and the  number of  iterations $(m_1,m_2)=(1,2)$ and $(\omega_{pre},\omega_{post})=(1,1/2).$
In all tables, $M$  denotes the number of spatial grid point;  the numerical errors are measured by the $ l_{\infty}$
(maximum) norm; and  `Rate' denotes the convergent orders.
`CPU' denotes the total CPU time in seconds (s) for solving the resulting discretized   systems;
and `Iter' denotes the average number of iterations required to solve a general linear system $A_h\nu^h=f_h$ at each time level.

All the computations are carried out on a PC with the configuration:
Intel(R) Core(TM) i5-3470 3.20 GHZ and 8 GB RAM and a 64 bit Windows 7 operating system.
Example \ref{example6.1} and  \ref{example6.2} numerical experiments are, respectively,  in Matlab and in Python.

\begin{example}\label{example6.1}
Consider the fractional Feynman-Kac equation  (\ref{4.1}) for 1D, on a finite domain  $0< x < 1 $,  $0<t \leq 1$
with the coefficient
$\kappa_\alpha=1$,  $\rho=1+ \sqrt{-1}$,  the forcing function
\begin{equation*}
\begin{split}
f(x,\rho,t)=& \frac{\Gamma(5+\alpha)}{\Gamma(5)}   e^{-\rho t}t^4(\sin(\pi x)+1)
        +\kappa_\alpha\pi^2 e^{-\rho t}(t^{4+\alpha}+1)\sin(\pi x) ,
\end{split}
\end{equation*}
the initial condition $G(x,\rho,0)=\sin(\pi x)+1$,  and the boundary
conditions $G(0,\rho,t)=G(1,\rho,t)=e^{-\rho t}(t^{4+\alpha}+1)$. Then (\ref{4.1}) has the exact
solution $$G(x,\rho,t)= e^{-\rho t}(t^{4+\alpha}+1)(\sin(\pi x)+1). $$
\end{example}

\begin{table}[h]\fontsize{9.5pt}{12pt}\selectfont
  \begin{center}
  \caption{MGM to solve (\ref{4.14}) at $T=1$ with $\nu=4$, $h=1/M$  and $N=M$,
  where $A_{k-1}=I_k^{k-1}A_kI_{k-1}^{k}$ ({\em Galerkin approach or algebraic MGM}) is computed by (\ref{2.13}).}\vspace{5pt}
 {\small   \begin{tabular*}{\linewidth}{@{\extracolsep{\fill}}*{9}{c}}                                    \hline  
$M$            &  $\alpha=0.3$   & Rate    & Iter    & CPU       &  $\alpha=0.8$  &   Rate &  Iter  &  CPU    \\\hline
    $2^{5}$   &   4.2225e-07  &           &  10     & 0.21 s   &  1.3008e-06         &          &  9    & 0.18 s\\\hline 
    $2^{6}$   &   2.6394e-08  & 3.9998    &  10     & 0.52 s   &  8.1345e-08         &  3.9992  &  9    & 0.47 s \\\hline 
    $2^{7}$   &   1.6494e-09  & 4.0002    &  10     & 1.41 s   &  5.0850e-09         &  3.9997  &  9    & 1.30 s \\\hline 
    $2^{8}$   &   1.0381e-10  & 3.9899    &  10     & 3.97 s   &  3.1723e-10         &  4.0026  &  9    & 3.71 s \\\hline 
    \end{tabular*}}\label{tab:1}
  \end{center}
\end{table}

\begin{table}[h]\fontsize{9.5pt}{12pt}\selectfont
  \begin{center}
  \caption{MGM to solve (\ref{4.14}) at $T=1$ with $\nu=4$, $h=1/M$  and $N=M$,
  where $A_{k-1}=l_0^{\nu,\alpha}{H}+\mu_{2^{K-k+1}h,\tau}^\alpha L$ ({\em doubling the mesh size or  geometric MGM}) is defined by (\ref{4.14})
  .}\vspace{5pt}
 {\small   \begin{tabular*}{\linewidth}{@{\extracolsep{\fill}}*{9}{c}}                                    \hline  
$M$            &  $\alpha=0.3$   & Rate    & Iter    & CPU       &  $\alpha=0.8$  &   Rate &  Iter  &  CPU    \\\hline
    $2^{5}$   &   4.2225e-07  &           &  10     & 0.19 s   &  1.3008e-06         &          &  9    & 0.19 s\\\hline 
    $2^{6}$   &   2.6394e-08  & 3.9998    &  10     & 0.48 s   &  8.1345e-08         &  3.9992  &  9    & 0.45 s \\\hline 
    $2^{7}$   &   1.6498e-09  & 3.9998    &  10     & 1.32 s   &  5.0851e-09         &  3.9997  &  9    & 1.25 s \\\hline 
    $2^{8}$   &   1.0396e-10  & 3.9882    &  10     & 3.62 s   &  3.1730e-10         &  4.0024  &  9    & 3.60 s \\\hline 
    \end{tabular*}}\label{tab:2}
  \end{center}
\end{table}
We use two coarsening strategies: {\em Galerkin approach} and {\em doubling the mesh size}, respectively, to solve the
resulting system (\ref{5.1}).  Tables \ref{tab:1} and \ref{tab:2} show that these two methods have almost the same error values
with the global truncation error $\mathcal{O}(\tau^\nu+h^4),~ \nu=4$,  so that  the locally weighted averaging of Galerkin approach brings convenience for handling the convergence proof, but no more benefits are obtained. In fact, as proved in \cite{doatelliNLAA}, the convergence conditions of the Galerkin and of the geometric approaches are very similar (except for the full rank of the projector which is needed in the Galerkin approach only). However, in general, the Galerkin technique is more robust and the potential reason for which here this fact is not observed is  the presence of the stiffness matrix which improves the conditioning of the problem, acting as a mild regularization.

\begin{example}\label{example6.2}
Consider the fractional Feynman-Kac equation  (\ref{4.1}) for 2D, on a finite domain  $0< x,y < 1 $,  $0<t \leq 1$
with the coefficient $\kappa_\alpha=1$,  $\rho=1$,  the initial condition is  $G(x,\rho,0)=0$  and the zero boundary
conditions on the rectangle. Taking  the exact
solution as $$G(x,y,\rho,t)= e^{-\rho t}t^{4+\alpha}\sin(\pi x)\sin(\pi y)$$
and using above assumptions, it is easy to obtain the forcing functions $f(x,y,\rho,t)$.
\end{example}
\begin{table}[h]\fontsize{9.5pt}{12pt}\selectfont
  \begin{center}
  \caption{MGM to solve (\ref{4.18}) at $T=1$ with $\nu=2$, $h=1/M$  and $N=M$, where
   $A_{k-1}=l_{0}^{\nu,\alpha}I\otimes I+ \mu_{2^{K-k+1}h,\tau}^\alpha \left( I\otimes L+L\otimes I\right)$  is defined by (\ref{4.18}).}\vspace{5pt}
 {\small   \begin{tabular*}{\linewidth}{@{\extracolsep{\fill}}*{9}{c}}                                    \hline  
$M$            &  $\alpha=0.3$   & Rate    & Iter    & CPU       &  $\alpha=0.8$  &   Rate &  Iter  &  CPU    \\\hline
    $2^{4}$   &   1.4647e-03  &           &  17     & 2.01 s   &   2.0068e-03         &   &  16    & 2.00 s \\\hline 
    $2^{5}$   &   3.3496e-04  & 2.1285    &  17     & 9.45 s   & 4.6874e-04         &   2.098  &  16    & 8.84 s \\\hline 
    $2^{6}$   &   8.0048e-05  & 2.0650    &  18     & 44.70 s   &  1.1340e-04         &  2.047  & 15    & 36.48 s \\\hline 
    $2^{7}$   &   1.9564e-05  & 2.0327    &  18     & 193.65 s   &  2.7896e-05         &  2.023  &  15    & 162.44 s \\\hline 
    \end{tabular*}}\label{tab:3}
  \end{center}
\end{table}
From Table \ref{tab:3}, we numerically confirm that the numerical scheme has second-order accuracy in both time and space directions.

\begin{remark}
Since the joint PDF $G(x,A,t)$ is the inverse Laplacian transform $\rho\rightarrow A$ of $G(x,\rho,t)$, for getting $G(x,A,t)$, we need to further perform the inverse numerical Laplacian transform, which has been discussed in \cite{Deng:14}.
\end{remark}

\section{Concluding remarks and future work}
This paper provides few ideas for verifying the uniform convergence  of the V-cycle MGM for symmetric positive definite Toeplitz block tridiagonal matrices, where we use the
simple (traditional) restriction operator and prolongation operator to handle
 general Toeplitz systems directly  for the elliptic PDEs.
Then we further derive the difference scheme for the backward fractional Feynman-Kac equation, which describes the distribution of the functional of non-Brownian particles;
finally, the V-cycle multigrid method is effectively used to solve the generated algebraic system, and the uniform convergence is obtained. In particular, for the coarsing of multigrid methods, even though the geometric MGM and algebraic MGM are different in theoretical analysis and techniques, numerically most of the time almost the same numerical results can be got.
Concerning the future work, the main point to investigate is the extension of this proof  to general banded or dense Toeplitz matrices \cite{Arico:07,Arico:04}.
In fact, for the full Toeplitz matrices with a  weakly diagonally dominant symmetric  Toeplitz  M-matrices,  the condition  (\ref{2.10}) holds when $\omega \in (0,1/3]$ \cite{Chen:16}. Hence the real challenge is the verification of  condition (\ref{2.11}) or of condition (\ref{2.12}) and this will the subject of future researches.

\section*{Acknowledgments}
The first author wishes to thank Qiang Du for his valuable comments while working in Columbia university.
This work was supported by NSFC 11601206 and 11671182, the Fundamental Research Funds for the Central Universities under Grant No. lzujbky-2016-105.

\section*{Appendix}
{\it\bf Proof of    Lemma \ref{lemma2.4}}.
Since $A^{(k)}$ is the  symmetric matrix,
we denote
$A^{(k)}=\{a_{i,j}^{(k)}\}_{i,j=1}^{\infty}$  with $a_{i,j}^{(k)}=a_{|i-j|}^{(k)}~~~~\forall k \geq 1.$
Using the  relation $A^{(k)}=L_h^{H}A^{(k-1)}L_{H}^{h}$, there exists
$$\{b_{j,l}^{(k)}\}_{j,l=1}^{\infty}=A^{(k-1)}L_{H}^{h}~~~~{\rm and}~~~~\{a_{i,l}^{(k)}\}_{i,l=1}^{\infty}=L_h^{H}A^{(k-1)}L_{H}^{h}$$
with $b_{j,l}^{(k)}=a_{2l-j-1}^{(k-1)}+2a_{2l-j}^{(k-1)}+a_{2l-j+1}^{(k-1)}$ and
$a_{i,l}^{(k)}=b_{2i-1,l}^{(k)}+2b_{2i,l}^{(k)}+b_{2i+1,l}^{(k)}$.
Then for the Toeplitz matrix $A^{(k)}$, it holds
\begin{equation}\tag{A.1}
\begin{split}
a_0^{(k)}&=6a_0^{(k-1)}+8a_1^{(k-1)}+2a_2^{(k-1)} \quad \forall k \geq 2; \\
a_j^{(k)}&=a_{2j-2}^{(k-1)} +4a_{2j-1}^{(k-1)}+6a_{2j}^{(k-1)}+4a_{2j+1}^{(k-1)}+a_{2j+2}^{(k-1)} \quad \forall j \geq 1.
\end{split}
\end{equation}
We prove  (\ref{2.13}) by mathematical induction.
For $k=2$, Eq. (\ref{2.13}) holds obviously. Suppose (\ref{2.13}) holds for $k=2,3,\ldots s$.  In particular,   for $k=s$, we have
\begin{equation}\tag{A.2}
\begin{split}
a_0^{(s)}
=&(4C_s+2^{s-1})a_0^{(1)}+\sum_{m=1}^{2\cdot2^{s-1}-1}{_0}C_m^sa_m^{(1)};\\
a_1^{(s)}
=&C_sa_0^{(1)}+\sum_{m=1}^{3\cdot2^{s-1}-1}{_1}C_m^s a_m^{(1)};\\
a_j^{(s)}
=&\sum_{m=(j-2)2^{s-1}}^{(j+2)2^{s-1}-1} {_j}C_m^s a_m^{(1)} \quad \forall j\geq 2.
    \end{split}
\end{equation}
Next we need to  prove that  (\ref{2.13}) holds for $k=s+1$.

According to (A.1), (A.2) and the coefficients  ${_j}C_m^s$, $j\geq 0$ in (\ref{2.13}), we can  check that
\begin{equation*}
\begin{split}
a_0^{(s+1)}
&=6a_0^{(s)}+8a_1^{(s)}+2a_2^{(s)}\\
&=\left(32c_s+6\cdot 2^{s-1}\right)a_0^{(1)}+\sum_{m=1}^{2^{s-1}-1}\left(6\cdot{_0}C_m^{s} +8\cdot{_1}C_m^{s}+2\cdot{_2}C_m^{s} \right)a_m^{(1)}\\
&\quad+\sum_{m=2^{s-1}}^{2\cdot2^{s-1}-1}\left(6\cdot{_0}C_m^{s} +8\cdot{_1}C_m^{s}+2\cdot{_2}C_m^{s} \right)a_m^{(1)}\\
&\quad+\sum_{m=2\cdot2^{s-1}}^{3\cdot2^{s-1}-1}\left(8\cdot{_1}C_m^{s}+2\cdot{_2}C_m^{s} \right)a_m^{(1)}
      +\sum_{m=3\cdot2^{s-1}}^{4\cdot2^{s-1}-1}2\cdot{_2}C_m^{s} a_m^{(1)}\\
&=(4C_{s+1}+2^{s})a_0^{(1)}+\sum_{m=1}^{4\cdot2^{s-1}-1}{_0}C_m^{s+1}a_m^{(1)};
    \end{split}
\end{equation*}
\begin{equation*}
\begin{split}
a_1^{(s+1)}
&=a_0^{(s)}+4a_1^{(s)}+6a_2^{(s)}+4a_3^{(s)}+a_4^{(s)}\\
&=\left(8c_s+ 2^{s-1}\right)a_0^{(1)}+\sum_{m=1}^{2^{s-1}-1}\left({_0}C_m^{s} +4\cdot{_1}C_m^{s}+6\cdot{_2}C_m^{s} \right)a_m^{(1)}\\
&\quad+\sum_{m=2^{s-1}}^{2\cdot2^{s-1}-1}\left({_0}C_m^{s} +4\cdot{_1}C_m^{s}+6\cdot{_2}C_m^{s} +4\cdot{_3}C_m^{s} \right)a_m^{(1)}\\
&\quad+\sum_{m=2\cdot2^{s-1}}^{3\cdot2^{s-1}-1}\left(4\cdot{_1}C_m^{s}+6\cdot{_2}C_m^{s} +4\cdot{_3}C_m^{s}+{_4}C_m^{s} \right)a_m^{(1)}\\
&\quad+\sum_{m=3\cdot2^{s-1}}^{4\cdot2^{s-1}-1}\left(6\cdot{_2}C_m^{s} +4\cdot{_3}C_m^{s}+{_4}C_m^{s} \right)a_m^{(1)}\\
&\quad+\sum_{m=4\cdot2^{s-1}}^{5\cdot2^{s-1}-1}\left(4\cdot{_3}C_m^{s}+{_4}C_m^{s} \right)a_m^{(1)}
 +\sum_{m=5\cdot2^{s-1}}^{6\cdot2^{s-1}-1}{_4}C_m^{s} a_m^{(1)}\\
&=C_{s+1}a_0^{(1)}+\sum_{m=1}^{6\cdot2^{s-1}-1}{_1}C_m^{s+1} a_m^{(1)};\\
    \end{split}
\end{equation*}
and
\begin{equation*}
\begin{split}
a_j^{(s+1)}
&=a_{2j-2}^{(s)} +4a_{2j-1}^{(s)}+6a_{2j}^{(s)}+4a_{2j+1}^{(s)}+a_{2j+2}^{(s)}\\
&=\sum_{m=(2j-4)2^{s-1}}^{(2j-3)2^{s-1}-1} {_{2j-2}}C_m^{s} a_m^{(1)}
+\sum_{m=(2j-3)2^{s-1}}^{(2j-2)2^{s-1}-1} \left(  {_{2j-2}}C_m^{s} +4\cdot{_{2j-1}}C_m^{s}   \right)a_m^{(1)}\\
&\quad+\sum_{m=(2j-2)2^{s-1}}^{(2j-1)2^{s-1}-1} \left(  {_{2j-2}}C_m^{s} +4\cdot{_{2j-1}}C_m^{s}+6\cdot{_{2j}}C_m^{s}     \right)a_m^{(1)}\\
&\quad+\sum_{m=(2j-1)2^{s-1}}^{2j\cdot2^{s-1}-1} \left(  {_{2j-2}}C_m^{s} +4\cdot{_{2j-1}}C_m^{s}+6\cdot{_{2j}}C_m^{s}+4\cdot{_{2j+1}}C_m^{s}      \right)a_m^{(1)}\\
&\quad+\sum_{m=2j\cdot2^{s-1}}^{(2j+1)\cdot2^{s-1}-1} \left(  4\cdot{_{2j-1}}C_m^{s}+6\cdot{_{2j}}C_m^{s}+4\cdot{_{2j+1}}C_m^{s}+{_{2j+2}}C_m^{s}       \right)a_m^{(1)}\\
&\quad+\sum_{m=(2j+1)\cdot2^{s-1}}^{(2j+2)\cdot2^{s-1}-1} \left(  6\cdot{_{2j}}C_m^{s}+4\cdot{_{2j+1}}C_m^{s}+{_{2j+2}}C_m^{s}       \right)a_m^{(1)}\\
&\quad+\sum_{m=(2j+2)\cdot2^{s-1}}^{(2j+3)\cdot2^{s-1}-1} \left(4\cdot{_{2j+1}}C_m^{s}+{_{2j+2}}C_m^{s}       \right)a_m^{(1)}
 +\sum_{m=(2j+3)\cdot2^{s-1}}^{(2j+4)\cdot2^{s-1}-1} {_{2j+2}}C_m^{s}       a_m^{(1)}\\
&=\sum_{m=(2j-4)2^{s-1}}^{(2j+4)2^{s-1}-1} {_j}C_m^{s+1} a_m^{(1)}.
    \end{split}
\end{equation*}
The proof is completed.

\end{document}